\newtheorem{thm}{Theorem}[section]
\newtheorem{prop}[thm]{Proposition}
\newtheorem{cor}[thm]{Corollary}
\theoremstyle{definition}
\newtheorem{defn}[thm]{Definition}
\newtheorem{ex}[thm]{Example}
\newtheorem{rem}[thm]{Remark}
\newcommand{\psent}{p-sentence }
\newcommand{\psents}{p-sentences }
\newcommand{\NSym}{\textit{NSym}}
\newcommand{\QSym}{\textit{QSym}}
\newcommand{\smallersetminus}{\mathbin{\vcenter{\hbox{$\scriptscriptstyle\mathrlap{\setminus}{\hspace{.2pt}}\setminus$} }}}
\def\shuffle{\begin{picture}(16,5)(-2,0) 
\put(0,0){\line(0,1){5}}
\put(5,0){\line(0,1){5}}
\put(10,0){\line(0,1){5}}
\put(0,0){\line(1,0){10}}
\end{picture}}
\def\Qshuffle{\stackrel{Q}{\shuffle}}
\title[Colored symmetric functions]{ A Hopf algebra generalization of the symmetric functions in partially commutative variables}
\author[S. Daugherty]{Spencer Daugherty} \thanks{spencer.daugherty@colorado.edu. Department of Mathematics, University of Colorado Boulder, USA}
\begin{document}

\begin{abstract} The quasisymmetric functions, $QSym$, are generalized for a finite alphabet $A$ by the colored quasisymmetric functions, $QSym_A$, in partially commutative variables. Their dual, $NSym_A$, generalizes the noncommutative symmetric functions, $NSym$, through a relationship with a Hopf algebra of trees.  We define an algebra $Sym_A$, contained within $QSym_A$, that is isomorphic to the symmetric functions, $Sym$, when $A$ is an alphabet of size one. We show that $Sym_A$ is a Hopf algebra and define its graded dual, $PSym_A$, which is the commutative image of $NSym_A$ and also generalizes $Sym$. The seven algebras listed here can be placed in a commutative diagram connected by Hopf morphisms. In addition to defining generalizations of the classic bases of the symmetric functions to $Sym_A$ and $PSym_A$, we describe multiplication, comultiplication, and the antipode in terms of a basis for both algebras. We conclude by defining a pair of dual bases that generalize the Schur functions and listing open questions. 
\end{abstract}

\maketitle

\section{Introduction}

Generalizations of the symmetric functions are widely studied and have a variety of formulations that lift different properties and structures. Among these generalizations are the quasisymmetric functions and noncommutative functions ($NSym$), which are significant to algebra and combinatorics in their own right. Along with $Sym$, both $QSym$ and $NSym$ are Hopf algebras - as are many other generalizations of the symmetric functions, or even generalizations of the quasisymmetric and noncommutative functions. These constitute a rich and varied subject of study.

For example, in \cite{Duchamp}, $Sym$ is generalized to the free symmetric functions $FSym$ while $QSym$ is generalized to the free quasisymmetric functions $FQSym$ and the matrix quasisymmetric functions $MQSym$, all of which are Hopf algebras. The preceding algebras have bases indexed by combinatorial objects such as standard young tableaux, permutations, and packed integer matrices as opposed to compositions or partitions, and $FQSym$ is isomorphic to the well-known Malvenuto-Reutenauer algebra \cite{Duch2, MalRet}.  As another example, symmetric functions in superspace, which incorporate noncommuting variables, were developed in \cite{Desrosiers} and shown to constitute a cocommutative and self-dual hopf algebra in \cite{fishel}. The latter paper also introduces noncommutative symmetric functions in superspace and quasisymmetric functions in superspace, which they show to be dual hopf algebras. For more Hopf algebra generalizations of symmetric functions, see \cite{Guo_MR, hazel_word, MalRet}.

Other generalizations of the symmetric functions do not focus on the Hopf algebra structure.
In \cite{Jing}, generalizations of the symmetric functions using vertex operators and their deformations are studied, with special attention paid to the Hall-Littlewood functions and raising operator formulas. \cite{Vaccarino} generalizes the symmetric functions by studying polynomial invariants of multiple (as opposed to single in the classical case) matrices by conjugation of the general linear group. Even more generalizations of symmetric (or quasisymmetric) functions can be found in \cite{Ahmia, Chrz2,Chrz1,gessel_enum,color_thesis,poirier,verdestar}.

Our initial focus is on the Hopf algebra generalizations of the quasisymmetric and noncommutative symmetric functions introduced by Doliwa \cite{doliwa21} called $QSym_A$ and $NSym_A$, respectively, defined for an alphabet $A$. In the case that $A$ is an alphabet of size one, $QSym_A$ and $NSym_A$ are isomorphic to $QSym$ and $NSym$, respectively.  The origin of these generalizations, specifically of $NSym_A$, lies in a Hopf algebra of rooted trees. Such Hopf algebras have been studied intently, in large part for their applications to the renormalization process in quantum field theory \cite{Foissy2013AnIT}. 

The Connes-Kreimer Hopf algebra of rooted trees, $CK$, was introduced in \cite{Connes_1998} while its noncommutative version on planar rooted trees, $NCK$, was introduced in \cite{Foissy}. 
These are related closely to other well-known Hopf algebras, for example, the graded dual of $CK$ is the Grossman-Larson Hopf algebra, $GL$, introduced in \cite{GROSSMAN1989184}. 
Additionally, $NCK$ is isomorphic to the Loday-Ronco Hopf algebra, $LR$, and the Brouder-Frabetti Hopf algebra, $BF$ \cite{Foissy, Hivert, holt, vanderlaan2002hopf}. 
Aguiar and Sottile \cite{aguiar2006structure} and Hoffman \cite{hoffman2007noncommutative} study the relationships between these algebras of trees and $QSym$ and $NSym$. 
Specifically, $NSym$ is shown to be isomorphic to a Hopf subalgebra of $NCK$ while $QSym$ is isomorphic to a Hopf subalgebra of the graded dual of $NCK$,  which turns out to be $NCK$ itself. $NSym$ is isomorphic to the subalgebra of $NCK$ made up of tall rooted planar trees, and $Sym$ is isomorphic to the subalgebra of $CK$ made up of tall rooted trees. Doliwa's definitions of $NSym_A$ and $QSym_A$ generalize these relationships using decorated trees from Foissy's decorated versions of $CK$ and $NCK$ \cite{Foissy}.  Doliwa \cite{doliwa21} studies the decorated $NCK$ but reformulated as a Hopf algebra of rooted ordered colored trees, $ROC$. Specifically, he identifies the Hopf subalgebra of tall rooted ordered colored trees and defines $NSym_A$ so that it is isomorphic.  

In this paper, we introduce and study two dual Hopf algebras, $Sym_A$ and $PSym_A$, that generalize the symmetric functions and specialize $QSym_A$ and $NSym_A$. Specifically, $PSym_A$ is the commutative image of $NSym_A$ and $Sym_A$ is a subalgebra of $QSym_A$. In Section \ref{sec:background}, we begin with an overview of symmetric functions following \cite{EC2, Zab} and Hopf algebras based on \cite{doliwa21, grinberg}, which leads into material on the quasisymmetric and noncommutative symmetric functions from the previous two sources as well as \cite{berg18, mason}. We close our background section by presenting Doliwa's colored generalizations of $QSym$ and $\NSym$ \cite{doliwa21}. 

In Section \ref{sec:psyma}, we define the algebra $PSym_A$ using commutative generators associated with words of alphabet $A$, which yield basis elements indexed by sorted sentences. We describe a multiplication, comultiplication, counit, and antipode that give $PSym_A$ a Hopf algebra structure. Finally, we show that $PSym_A$ is the commutative image of $NSym_A$ and, when $A$ is size one, isomorphic to $Sym$.

In Section \ref{sec:syma}, we define $Sym_A$ and show that it is a Hopf subalgebra of $QSym_A$. We express multiplication, comultiplication, and the antipode in terms of the colored monomial basis of $Sym_A$.  Like $PSym_A$, we show that $Sym_A$ is isomorphic to $Sym$ when $A$ is size one. Moreover, we prove that $Sym_A$ and $PSym_A$ are dual Hopf algebras.

In Section \ref{sec:colorschur} by introducing colored generalizations of the Schur basis in $Sym_A$ and $PSym_A$. We define the colored dual Schur symmetric functions in $Sym_A$ in terms of their expansion in the colored monomial basis and colored semistandard Young tableaux. The colored Schur bases in $PSym_A$ are defined as their duals. 

We close with open questions, largely relating to the colored Schur-like bases of $NSym_A$ and $QSym_A$ introduced by the author in \cite{Dau23, Dau_thesis}.

\subsection{Acknowledgements} The work in this paper also appeared in the author's doctoral thesis \cite{Dau_thesis}. Many thanks to Laura Colmenarejo and Sarah Mason for their support on this project.

\section{Background}\label{sec:background}

A finite sequence of positive integers $\alpha = (\alpha_1, \alpha_2, \ldots, \alpha_k)$ is a \emph{composition of} $n$ , written $\alpha \models n$, if $\sum_{i=1}^k \alpha_i = n$.  A finite sequence of positive integers $\lambda = (\lambda_1, \lambda_2, \ldots, \lambda_k)$ is a \emph{partition of} $n$ , written $\lambda \vdash n$, if $\sum_{i=1}^k \lambda_i = n$ and $\lambda_1 \geq \lambda_2 \geq \ldots \geq \lambda_k$. We let $sort(\alpha)$ denote the partition with the same entries as a composition $\alpha$. We say $\alpha \preceq \beta$, or $\alpha$ is a refinement of $\beta$, if $\{\beta_1, \beta_1+\beta_2, \beta_1+\beta_2 + \beta_3, \cdots\} \subseteq \{\alpha_1, \alpha_1+\alpha_2, \alpha_1+\alpha_2+\alpha_3, \cdots\}$.

Partitions and compositions are represented visually (in English notation) with diagrams made up of top-left aligned rows of boxes. The diagram of a partition $\alpha = (\alpha_1, \ldots, \alpha_k)$ will have $\alpha_1$ boxes in row $1$ (the top row), $\alpha_2$ boxes in row $2$, and so on. 

\subsection{Symmetric Functions}

A \textit{symmetric function} $f(x)$ with rational coefficients is a formal power series $f(x) = \sum_{\alpha} c_{\alpha}x^{\alpha}$ where $\alpha$ is a weak composition,  $x^{\alpha} = x_1^{\alpha_1}\ldots x_k^{\alpha_k}$, and $f(x_{\omega(1)}, x_{\omega(2),\ldots}) = f(x_1,x_2,\ldots)$ for all permutations $\omega$ of $\mathbb{Z}_{>0}$. Here we take $x = (x_1, x_2, \ldots)$ and $c_{\alpha} \in \mathbb{Q}$. The algebra of symmetric functions is denoted $Sym$, and we choose to take $\mathbb{Q}$ as our base field. 


\begin{rem}
    We use a common convention and write the symmetric function $f(x)$ as $f$ without the set of variables and $(x)$. Each of these functions is over the variables $(x_1, x_2, \ldots)$.
\end{rem}

Now we present the classic bases of $Sym$.
For a partition $\lambda = (\lambda_1, \ldots, \lambda_k) \vdash n$, the \emph{monomial symmetric function} is defined as $$m_{\lambda} = \sum_{\alpha} x^{\alpha},$$ where the sum runs over all unique compositions $\alpha$ that are permutations of the entries of $\lambda$. 
The \emph{elementary symmetric function} $e_{\lambda}$ is defined by $$e_n = m_{1^n}= \sum_{i_1 < \cdots < i_n} x_{i_1} \cdots x_{i_n}, \text{\quad with \quad} e_{\lambda} = e_{\lambda_1}e_{\lambda_2} \cdots e_{\lambda_k}.$$
The \emph{complete homogeneous symmetric function} is defined by 
$$h_n = \sum_{\lambda \vdash n} m_{\lambda} = \sum_{i_1 \leq \cdots \leq i_n}x_{i_1} \cdots x_{i_n} \text{\quad with \quad} h_{\lambda} = h_{\lambda_1} \cdots h_{\lambda_k}.$$
The sets $\{m_{\lambda}\}_{\lambda}$, $\{h_{\lambda}\}_{\lambda}$, and $\{e_{\lambda}\}_{\lambda}$ are bases of $Sym$. Two bases of $Sym$ $\{u_{\lambda}\}_{\lambda}$ and $\{v_{\lambda}\}_{\lambda}$ are \emph{dual bases} if and only if $\langle u_{\lambda}, v_{\mu} \rangle = \delta_{\lambda, \mu}$, using a bilinear form denoted by $\langle \cdot, \cdot \rangle: Sym \times Sym \rightarrow \mathbb{Q}$ where $\langle m_{\lambda}, h_{\mu} \rangle = \delta_{\lambda, \mu}$. $Sym$ is a self-dual algebra under this inner product.
There is a classical involutive endomorphism on the symmetric functions $\omega: Sym \rightarrow Sym$ defined by $\omega(e_{\lambda}) = h_{\lambda}$.   The involution $\omega$ is invariant under duality, which is to say $\langle \omega(f), \omega(g) \rangle = \langle f,g \rangle$ for all $f,g \in Sym$.

Given a partition $\lambda \vdash n$, a \emph{semistandard Young tableau} (SSYT) of shape $\lambda$ is a filling of the Young diagram of $\lambda$ filled with positive integers such that the entries in each row weakly increase from left to right, and the entries in each column strictly increasing from top to bottom. The \emph{size} of a SSYT is $|\lambda|$, the number of boxes, and its \emph{type} (or content) is a weak composition that encodes the number of times each integer appears as an entry in the tableau. We write $type(T) = (\beta_1,\ldots,\beta_j)$ if $T$ has $\beta_i$ boxes containing an $i$ for all $i \in [j]$. A \emph{standard Young tableau} (SYT) of size $n$ is a Young tableau in which each number in $ [n]$ appears exactly once.  A tableaux $T$ of type $\beta = (\beta_1, \ldots, \beta_k)$ is associated with the monomial $x^T = x_1^{\beta_1} \cdots x_k^{\beta_k}$.

\begin{ex} The semistandard Young tableaux of shape $(2,2)$ with entries in $\{1,2,3\}$ and their associated monomials are:
$$\begin{ytableau}1&1\\2&2\end{ytableau} \quad \quad
\begin{ytableau}1&1\\2&3\end{ytableau} \quad \quad
\begin{ytableau}1&1\\3&3\end{ytableau} \quad \quad
\begin{ytableau}1&2\\2&3\end{ytableau} \quad \quad
\begin{ytableau}1&2\\3&3\end{ytableau} \quad \quad
\begin{ytableau}2&2\\3&3\end{ytableau}
$$
$$\ \ x_1^2x_2^2 \quad \quad x_1^2x_2x_3  \quad \quad \ \ x_1^2x_3^2 \quad \quad x_1x_2^2x_3 \quad \quad \ x_1x_2x_3^2  \quad \ \ \ x_2^2x_3^2 \quad$$
The standard Young tableaux of shape $(2,2)$, both of type $(1,1,1,1)$, are:

$$\begin{ytableau}1&2\\ 3&4\end{ytableau} \quad \quad \begin{ytableau}1&3\\ 2&4\end{ytableau}$$

\end{ex}\vspace{-1mm}

Semistandard Young tableaux are the combinatorial objects used to define the Schur symmetric functions. These functions appear throughout combinatorics, representation theory, algebraic geometry, and other fields, and form a crucial part of the foundation of the theory of symmetric functions.

For a partition $\lambda$, the \emph{Schur symmetric function} is defined as $$s_{\lambda} = \sum_T x^T,$$ where the sum runs over all semistandard Young tableaux $T$ of shape $\lambda$ with entries in $\mathbb{Z}_{> 0}$.


The Schur functions are an orthonormal basis for $Sym$, meaning $\langle s_{\lambda}, s_{\mu} \rangle = \delta_{\lambda, \mu}$. Additionally, the Schur basis maps to itself under $\omega$ by $\omega(s_{\lambda}) = s_{\lambda'}$. Define the \emph{Kostka number} $K_{\lambda, \alpha}$ as the number of $SSYT$s of shape $\lambda$ and type $\alpha$. Then, $$s_{\lambda} = \sum_{\mu}K_{\lambda, \mu}m_{\mu} \text{\qquad and \qquad} h_{\mu} = \sum_{\lambda} K_{\lambda, \mu} s_{\lambda}.$$  

The Kostka numbers have a variety of interesting combinatorics and applications, see \cite{Alexandersson2020} for an overview of topics and sources.
n{Hopf algebras}\label{hopf_alg_sec}
Hopf algebras are important structures in combinatorics as well as algebraic topology, representation theory, physics, and quantum field theory, with notable examples including $Sym$, $QSym$, and $\NSym$. See \cite{doliwa21, grinberg} for more details.

 Let $\Bbbk$ be a field of characteristic zero. An \emph{associative algebra} is a $\Bbbk$-module $\mathcal{H}$ with $\Bbbk$-linear multiplication $\mu: \mathcal{H} \otimes \mathcal{H} \rightarrow \mathcal{H}$ and a $\Bbbk$-linear unit $\eta: \Bbbk \rightarrow \mathcal{H}$, for which the following diagrams commute:
 $$  \begin{CD}
            \mathcal{H}\otimes \mathcal{H}\otimes \mathcal{H} @ > \mathrm{id} \otimes \mu >>  \mathcal{H} \otimes \mathcal{H} \\
            @V \mu \otimes \mathrm{id} VV     @VV \mu V \\
            \mathcal{H}\otimes \mathcal{H} @ > \mu >>  \mathcal{H}
            \end{CD}
            \qquad \qquad
            \begin{CD}
            \mathcal{H}\otimes \Bbbk @= \mathcal{H}  @= \Bbbk \otimes \mathcal{H}\\
            @V\mathrm{id} \otimes \eta VV @V\mathrm{id} VV @VV \eta \otimes \mathrm{id} V \\
            \mathcal{H}\otimes \mathcal{H} @> \mu >>  \mathcal{H} @< \mu << \mathcal{H}\otimes \mathcal{H}
\end{CD}$$
A \emph{co-associative coalgebra} is a $\Bbbk$-module $\mathcal{H}$ with $\Bbbk$-linear comultiplication $\Delta: \mathcal{H} \rightarrow \mathcal{H} \otimes \mathcal{H}$ and $\Bbbk$-linear counit $\epsilon: \mathcal{H} \rightarrow \Bbbk$, satisfying the commutative diagrams:
 $$
        \begin{CD}
        \mathcal{H} @ > \Delta >> \mathcal{H}\otimes \mathcal{H} \\
        @V \Delta  VV     @VV \Delta \otimes \mathrm{id} V \\
        \mathcal{H} \otimes \mathcal{H} @ > \mathrm{id} \otimes \Delta >> \mathcal{H}\otimes \mathcal{H}\otimes \mathcal{H} 
        \end{CD} \qquad \qquad
        \begin{CD}
        \mathcal{H}\otimes \mathcal{H} @ < \Delta <<  \mathcal{H} @ > \Delta >> \mathcal{H}\otimes \mathcal{H} \\
        @V\mathrm{id} \otimes \epsilon VV @V\mathrm{id} VV @VV \epsilon \otimes \mathrm{id} V \\
        \mathcal{H}\otimes \Bbbk @= \mathcal{H}  @= \Bbbk \otimes \mathcal{H}
        \end{CD}
     $$

Note that in a standard abuse of notation, we often leave out the formal notation for multiplication and just write $b_1 b_2$ to indicate the multiplication of $b_1$ and $b_2$, or $\mu(b_1 \otimes b_2)$. 

An \emph{algebra (homo)morphism} is a $\Bbbk$-linear map $\mathcal{A} \xrightarrow{\varphi} \mathcal{B}$ between two algebras $\mathcal{A}$ and $\mathcal{B}$ such that for $a_1 ,a_2 \in \mathcal{A}$ and $k \in \Bbbk$, $$(\varphi \circ \mu_{\mathcal{A}})(a_1 \otimes a_2) = (\mu_{\mathcal{B}} \circ (\varphi \otimes \varphi))(a_1 \otimes a_2) \text{\quad and \quad} (\varphi \circ \eta_{\mathcal{A}})(k) = \eta_{\mathcal{B}}(k).$$
A \emph{coalgebra (homo)morphism} is a linear map $\mathcal{C} \xrightarrow{\varphi} \mathcal{D}$ between two coalgebras $\mathcal{C}$ and $\mathcal{D}$ such that for $c \in \mathcal{C}$, \begin{equation}\label{coalg_morph} 
(\Delta_{\mathcal{D}} \circ \varphi)(c) = ((\varphi \otimes \varphi) \circ \Delta_{\mathcal{C}})(c) \text{\quad and \quad} (\epsilon_{\mathcal{D}} \circ \varphi)(c) = \epsilon_{\mathcal{C}}(c).
\end{equation}

The subspace $\mathcal{G}$ is a  \emph{subalgebra} of $\mathcal{H}$ if $\mu$ restricts to $\mathcal{G}$, and a \emph{subcoalgebra} of $\mathcal{H}$ if $\Delta$ restricts to $\mathcal{G}$.

\begin{defn}\label{bialgebra}
    $(\mathcal{H}, \mu, \Delta, \eta, \epsilon)$ is a \emph{bialgebra} if $(\mathcal{B}, \mu, \eta)$ is an algebra and $(\mathcal{H}, \Delta, \epsilon)$ is a coalgebra and the following hold for $T(x \otimes y) = y \otimes x$ where $k \in \Bbbk$ and $h_1, h_2 \in \mathcal{H}$:
    \begin{enumerate}
        \item $(\Delta \circ \mu)(h_1 \otimes h_2) = ((\mu \otimes \mu) \circ (id \otimes T \otimes id)\circ (\Delta \otimes \Delta))(h_1 \otimes h_2)$
        \item $(\mu \circ (\epsilon \otimes \epsilon))(h_1 \otimes h_2) = (\epsilon \circ \mu)(h_1 \otimes h_2)$
        \item $(\Delta \circ \eta)(k) =  ((\eta \otimes \eta) \circ \Delta)(k)$
        \item $id(k)= (\epsilon \circ \eta)(k)$.
    \end{enumerate}
\end{defn}

A bialgebra $\mathcal{H}$ is \emph{graded} if it is graded as a $\Bbbk$-module $\mathcal{H} = \bigoplus_{n \geq 0} \mathcal{H}^{(n)}$ with 
$$\mathcal{H}^{(n)} \otimes \mathcal{H}^{(m)} \xrightarrow{\mu} \mathcal{H}^{(n+m)}, \qquad \mathcal{H}^{(n)} \xrightarrow{\Delta} \bigoplus_{m+p=n} \mathcal{H}^{(m)} + \mathcal{H}^{(p)}.$$ 
A graded bialgebra is \emph{connected} if $\mathcal{H}^{(0)} \cong \Bbbk$. 

\begin{defn}\label{hopf_defn} A bialgebra $(\mathcal{H}, \mu, \Delta, \eta, \epsilon)$ is a \emph{Hopf algebra} if there exists a $\Bbbk$-linear anti-endomorphism on $\mathcal{H}$, called the \emph{antipode}, such that the following diagram commutes:
 $$
    \label{antipode-diagram}
    \xymatrix{
    &\mathcal{H} \otimes \mathcal{H} \ar[rr]^{S \otimes id_\mathcal{H}}& &\mathcal{H} \otimes \mathcal{H} \ar[dr]^\mu& \\
    \mathcal{H} \ar[ur]^\Delta \ar[rr]^{\epsilon} \ar[dr]_\Delta& &\Bbbk \ar[rr]^{\eta} & & \mathcal{H}\\
    &\mathcal{H} \otimes \mathcal{H} \ar[rr]_{id_\mathcal{H} \otimes S}& &\mathcal{H} \otimes \mathcal{H} \ar[ur]_\mu& \\ }
    $$
    
\end{defn}

\begin{prop}\cite{ MilM}\label{bi_to_hopf}
    Any graded and connected bialgebra is a Hopf algebra.
\end{prop}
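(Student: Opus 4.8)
The plan is to construct the antipode explicitly as a formal power series in the convolution algebra and to verify that this construction terminates precisely because the bialgebra is graded and connected. Recall that the antipode $S$ is characterized as the two-sided inverse of the identity map $\mathrm{id}_{\mathcal{H}}$ under the \emph{convolution product} $\ast$ on $\mathrm{End}_{\Bbbk}(\mathcal{H})$, defined by $f \ast g = \mu \circ (f \otimes g) \circ \Delta$. Unpacking the commuting diagram in Definition \ref{hopf_defn}, the condition on $S$ is exactly that $S \ast \mathrm{id} = \eta \circ \epsilon = \mathrm{id} \ast S$, so that $\eta \circ \epsilon$ is the unit of the convolution algebra. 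Thus the entire content of the proposition is that $\mathrm{id}_{\mathcal{H}}$ is convolution-invertible, and I would reduce everything to that single statement.

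First I would record that $\mathrm{End}_{\Bbbk}(\mathcal{H})$ is an associative algebra under $\ast$ with unit $u \coloneqq \eta \circ \epsilon$; associativity follows from coassociativity of $\Delta$ and associativity of $\mu$, and the unit property follows from the counit axioms. Next I would exploit the grading and connectedness to control $\epsilon$. Since $\mathcal{H} = \bigoplus_{n \geq 0} \mathcal{H}^{(n)}$ with $\mathcal{H}^{(0)} \cong \Bbbk$, the counit $\epsilon$ must annihilate $\mathcal{H}^{(n)}$ for all $n \geq 1$ and restrict to an isomorphism on $\mathcal{H}^{(0)}$; consequently $u = \eta \circ \epsilon$ acts as the identity on the degree-$0$ part and as zero on all higher degrees. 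Writing $\pi \coloneqq \mathrm{id}_{\mathcal{H}} - u$, I observe that $\pi$ kills $\mathcal{H}^{(0)}$ and therefore is \emph{locally nilpotent} under convolution: because the grading forces any term of $\Delta(x)$ for $x \in \mathcal{H}^{(n)}$ to split the degree, an $m$-fold convolution power $\pi^{\ast m}$ vanishes on $\mathcal{H}^{(n)}$ whenever $m > n$.

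The key step is then to define $S$ by the geometric series $S \coloneqq \sum_{m \geq 0} (-1)^m \pi^{\ast m} = u - \pi + \pi^{\ast 2} - \cdots$, which makes sense because, evaluated on any fixed homogeneous element of degree $n$, only finitely many terms (those with $m \leq n$) contribute. A direct computation gives $\mathrm{id} \ast S = (u + \pi) \ast \sum_{m \geq 0}(-1)^m \pi^{\ast m} = u$ by telescoping, and symmetrically $S \ast \mathrm{id} = u$, so $S$ is the required two-sided convolution inverse of $\mathrm{id}$. That $S$ is a $\Bbbk$-linear anti-endomorphism (an algebra anti-homomorphism) is then a standard consequence of the bialgebra compatibility axioms in Definition \ref{bialgebra} rather than of the grading, and I would cite this as routine. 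The main obstacle, and the only place where real care is needed, is verifying the local nilpotency of $\pi$ precisely — that is, showing the grading genuinely forces $\pi^{\ast m}$ to vanish in low degrees so that the infinite sum defining $S$ is well-defined on each graded piece; once that finiteness is secured, the telescoping identities are purely formal and the rest follows immediately.
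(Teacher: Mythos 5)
Your proposal is correct, and it is the standard modern proof of this fact (Takeuchi's argument, as presented for instance in Grinberg--Reiner): realize the antipode as the convolution inverse of $\mathrm{id}_{\mathcal{H}}$ in $\mathrm{End}_{\Bbbk}(\mathcal{H})$, and construct that inverse as the geometric series $\sum_{m\geq 0}(-1)^m(\mathrm{id}-\eta\circ\epsilon)^{\ast m}$, which is locally finite precisely because the grading forces each tensor factor of an iterated coproduct of a degree-$n$ element to have degree at most $n$, and connectedness makes $\pi=\mathrm{id}-\eta\circ\epsilon$ vanish in degree $0$. Note, however, that the paper does not prove this proposition at all; it is quoted from Milnor--Moore, so there is no internal argument to compare against. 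Your write-up is therefore a genuine addition: it makes the result self-contained, and it also explains \emph{why} gradedness and connectedness are exactly the hypotheses needed (they guarantee convolution-invertibility of $\mathrm{id}$), which the bare citation obscures.

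One point deserves more care than you give it. You assert that $\epsilon$ annihilates $\mathcal{H}^{(n)}$ for $n\geq 1$, but the paper's definition of a graded bialgebra only requires $\mu$ and $\Delta$ to respect the grading; nothing is assumed about $\epsilon$ (or $\eta$). So this is not automatic from the definitions in play. It is provable: first one checks $1\in\mathcal{H}^{(0)}$ by splitting $1$ into homogeneous components and using uniqueness of the unit, so connectedness gives $\mathcal{H}^{(0)}=\Bbbk\cdot 1$; then from the counit axiom one deduces that for $x\in\mathcal{H}^{(n)}$ the $(n,0)$ and $(0,n)$ components of $\Delta(x)$ are $x\otimes 1$ and $1\otimes x$, and applying $\epsilon\otimes\epsilon$ and inducting on $n$ yields $\epsilon(x)=2\epsilon(x)+(\text{terms vanishing by induction})$, hence $\epsilon(x)=0$. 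Without this step (or without strengthening the definition to require $\epsilon$ graded), your description of $u=\eta\circ\epsilon$ as ``zero in positive degrees,'' and hence the local nilpotency of $\pi$, is not justified. With that lemma supplied, the telescoping computation and the standard argument that a convolution inverse of $\mathrm{id}$ is automatically an algebra anti-endomorphism complete the proof as you say.
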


Note that if $\mathcal{H}$ is a graded Hopf algebra, and $x\in \mathcal{H}^{(n)}$, then the antipode is expressed as 
\begin{equation}\label{anti_comult}
S(x) = - \sum_{i} S(y_i) z_i = - \sum_{i} y_i S(z_i) \end{equation}
where $\Delta(x) = \sum_i y_i \otimes z_i$ (Sweedler notation).

A subalgebra $\mathcal{G}$ of $\mathcal{H}$ is a \emph{Hopf subalgebra} if $\mathcal{G}$ is a subcoalgebra of $\mathcal{H}$ and the antipode $S$ of $\mathcal{H}$ restricts to $\mathcal{G}$. A \emph{bialgebra (homo)morphism} is a linear map between two bialgebras that is both an algebra homomorphism and a coalgebra homomorphism.

\begin{cor}\cite{grinberg}\label{hopf_morph}
    Let $H_1$ and $H_2$ be Hopf algebras with antipodes $S_1$ and $S_2$, respectively. Then, any bialgebra morphism $H_1 \xrightarrow{\beta} H_s$ is a Hopf morphism, that is, it commutes with the antipodes $(\beta \circ S_1 = S_2 \circ \beta)$.
\end{cor}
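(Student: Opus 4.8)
The plan is to realize both $\beta\circ S_1$ and $S_2\circ\beta$ as inverses of a single element in a convolution monoid and then appeal to the uniqueness of inverses. Recall that since $H_1$ is a coalgebra and $H_2$ is an algebra, the space $\operatorname{Hom}(H_1,H_2)$ becomes an associative monoid under the \emph{convolution product} $f*g = \mu_{H_2}\circ(f\otimes g)\circ\Delta_{H_1}$, whose identity element is $\eta_{H_2}\circ\epsilon_{H_1}$. I would view $\beta$, $\beta\circ S_1$, and $S_2\circ\beta$ all as elements of this monoid, show that $\beta\circ S_1$ is a right inverse of $\beta$ and that $S_2\circ\beta$ is a left inverse of $\beta$, and conclude that the two coincide.

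First I would compute the right inverse. Starting from $\beta*(\beta\circ S_1) = \mu_{H_2}\circ(\beta\otimes\beta)\circ(\mathrm{id}\otimes S_1)\circ\Delta_{H_1}$ and using that $\beta$ is an algebra morphism, so that $\mu_{H_2}\circ(\beta\otimes\beta)=\beta\circ\mu_{H_1}$, this collapses to $\beta\circ\mu_{H_1}\circ(\mathrm{id}\otimes S_1)\circ\Delta_{H_1}=\beta\circ(\eta_{H_1}\circ\epsilon_{H_1})$ by the antipode axiom for $S_1$ (the $\mathrm{id}\otimes S$ branch of the diagram in Definition \ref{hopf_defn}). Since $\beta$ respects units, $\beta\circ\eta_{H_1}=\eta_{H_2}$, and hence $\beta*(\beta\circ S_1)=\eta_{H_2}\circ\epsilon_{H_1}$, the convolution identity.

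Next I would compute the left inverse by the dual argument. From $(S_2\circ\beta)*\beta = \mu_{H_2}\circ(S_2\otimes\mathrm{id})\circ(\beta\otimes\beta)\circ\Delta_{H_1}$, now using that $\beta$ is a \emph{coalgebra} morphism, so that $(\beta\otimes\beta)\circ\Delta_{H_1}=\Delta_{H_2}\circ\beta$ by \eqref{coalg_morph}, this becomes $\mu_{H_2}\circ(S_2\otimes\mathrm{id})\circ\Delta_{H_2}\circ\beta=(\eta_{H_2}\circ\epsilon_{H_2})\circ\beta$ by the antipode axiom for $S_2$ (the $S\otimes\mathrm{id}$ branch). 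Since $\beta$ respects counits, $\epsilon_{H_2}\circ\beta=\epsilon_{H_1}$, so $(S_2\circ\beta)*\beta=\eta_{H_2}\circ\epsilon_{H_1}$ as well.

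Finally, having established $(S_2\circ\beta)*\beta=\eta_{H_2}\circ\epsilon_{H_1}=\beta*(\beta\circ S_1)$, I would invoke the elementary monoid fact that a left inverse and a right inverse of the same element must be equal: using associativity of $*$, we get $S_2\circ\beta=(S_2\circ\beta)*\big(\beta*(\beta\circ S_1)\big)=\big((S_2\circ\beta)*\beta\big)*(\beta\circ S_1)=\beta\circ S_1$. I do not expect a genuine obstacle here; the proof is essentially formal. The one point that requires care is bookkeeping: the algebra-morphism property of $\beta$ must be used in the right-inverse computation while the coalgebra-morphism property must be used in the left-inverse computation, because the two antipode axioms apply on opposite sides of the convolution, and mixing these up is the only way to go astray.
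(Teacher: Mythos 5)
Your proof is correct and takes essentially the same route as the paper's source for this statement: the paper states the corollary without proof, citing \cite{grinberg}, and the argument there is exactly yours --- realize $\beta\circ S_1$ and $S_2\circ\beta$ as a right and a left convolution inverse of $\beta$ in the convolution monoid $\operatorname{Hom}(H_1,H_2)$, then conclude they coincide by associativity. Your bookkeeping (algebra-morphism property for the right inverse, coalgebra-morphism property for the left inverse) is exactly right, so there is nothing to add.
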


Duality between Hopf algebras is one of the most important tools for working with our chosen algebras. 

\begin{defn} \label{hopf_dual_mult}
Let $(\mathcal{A},  \mu_{\mathcal{A}}, \Delta_{\mathcal{A}}, \eta_{\mathcal{A}}, \epsilon_{\mathcal{A}})$ and $(\mathcal{B},  \mu_{\mathcal{B}}, \Delta_{\mathcal{B}}, \eta_{\mathcal{B}}, \epsilon_{\mathcal{B}})$ be two Hopf Algebras with antipodes $\mathcal{S}_{\mathcal{A}}$ and $\mathcal{S}_{\mathcal{B}}$ respectively, where $1_{\mathcal{A}}$ and $1_{\mathcal{B}}$ are the multiplicative identity elements. We say that $\mathcal{A}$ and $\mathcal{B}$ are \emph{dually paired} by an inner product $\langle \ ,\ \rangle: \mathcal{B} \otimes \mathcal{A} \rightarrow \mathbb{Q}$, if, for all elements $a, a_1, a_2 \in \mathcal{A}$ and $b, b_1, b_2 \in \mathcal{B}$, we have $$\langle \mu_{\mathcal{B}}(b_1,b_2), a \rangle = \langle b_1 \otimes_{\mathcal{B}} b_2, \Delta_{\mathcal{A}}(a)\rangle, \qquad \qquad \langle 1_{\mathcal{B}}, a \rangle = \epsilon_{\mathcal{A}}(a), \qquad \qquad \qquad \qquad \qquad \qquad \qquad$$ 
$$\langle b, \mu_{\mathcal{A}}(a_1,a_2) \rangle = \langle \Delta_{\mathcal{B}}(b), a_1 \otimes_{\mathcal{A}} a_2 \rangle, \qquad \qquad \epsilon_{\mathcal{B}}(b) = \langle b, 1_{\mathcal{A}}  \rangle, \qquad \qquad \langle S_{\mathcal{B}}(b), a \rangle = \langle b, S_{\mathcal{A}}(a) \rangle. $$
\end{defn} 
Two bases $\{a_i\}_{i \in I}$ and $\{b_i\}_{i \in I}$ of $\mathcal{A}$ and $\mathcal{B}$ respectively are \emph{dual bases} if and only if $\langle a_i, b_i \rangle = \delta_{i,j}$. We say that a map $f: \mathcal{A} \rightarrow \mathcal{A}$ is \emph{adjoint} to a map $g: \mathcal{B} \rightarrow \mathcal{B}$ if \begin{equation}\label{adjoint}
\langle f(b_1), a_1 \rangle = \langle b_1, g(a_1) \rangle
\end{equation} for any $a_1 \ \in \mathcal{A}, b_1 \in \mathcal{B}$. The following result gives a relation for the change of bases using duality. 

\begin{prop}\label{hopf_dual} \cite{Hoffman} Let $\mathcal{A}$ and $\mathcal{B}$ be dually paired algebras and let $\{a_i\}_{i \in I}$ be a basis of $\mathcal{A}$. A basis $\{b_i\}_{i \in I}$ of $\mathcal{B}$ is the unique basis that is dual to $\{a_i\}_{i \in I}$ if and only if the following relationship holds for any pair of dual bases $\{c_i\}_{i \in I}$ in $\mathcal{A}$ and $\{d_i\}_{i \in I}$ in $\mathcal{B}$: $$a_i = \sum_{j\in I} k_{i,j} c_j \quad \Longleftrightarrow \quad d_j = \sum_{i \in I} k_{i,j} b_i.$$
\end{prop}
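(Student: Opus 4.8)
The plan is to reduce the statement to a transparent piece of linear algebra governing how dual bases transform under a change of basis, extracting the coefficient identity by evaluating a single pairing in two ways. Fix the pair of dual bases $\{c_i\}_{i\in I}$ and $\{d_i\}_{i\in I}$, so that $\langle c_i, d_j\rangle = \delta_{i,j}$, and write the change of basis $a_i = \sum_{j} k_{i,j} c_j$; since $\{a_i\}$ and $\{c_i\}$ are both bases, the matrix $K = (k_{i,j})$ is invertible. Throughout I will only use bilinearity of $\langle\,\cdot\,,\cdot\,\rangle$ together with the duality of $\{c_i\}$ and $\{d_i\}$, so that no special feature of the Hopf structure is needed for this purely linear-algebraic fact.

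For the forward direction, I would assume $\{b_i\}$ is dual to $\{a_i\}$, i.e. $\langle a_i, b_j\rangle = \delta_{i,j}$, and expand $d_j = \sum_i \ell_{j,i} b_i$ in the basis $\{b_i\}$. Computing $\langle a_m, d_j\rangle$ via the expansion of $a_m$ in the $c$'s gives $\sum_n k_{m,n}\langle c_n, d_j\rangle = k_{m,j}$, while computing it via the expansion of $d_j$ in the $b$'s gives $\sum_i \ell_{j,i}\langle a_m, b_i\rangle = \ell_{j,m}$. Equating the two yields $\ell_{j,m} = k_{m,j}$, which is exactly the claimed relation $d_j = \sum_i k_{i,j} b_i$. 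For the converse, I would assume both expansions hold; in matrix form $a = Kc$ forces $b = (K^{-1})^{\mathsf{T}} d$, so writing $b_n = \sum_j (K^{-1})_{j,n} d_j$ and pairing gives $\langle a_m, b_n\rangle = \sum_j k_{m,j}(K^{-1})_{j,n} = (KK^{-1})_{m,n} = \delta_{m,n}$, so $\{b_i\}$ is indeed dual to $\{a_i\}$. Uniqueness of the dual basis then follows from nondegeneracy of the pairing, which is implicit in the existence of the dual pair $\{c_i\},\{d_i\}$.

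I expect the only genuine subtlety to be justifying the matrix manipulations when the index set $I$ is infinite. This is resolved by the grading: in each application the bases are indexed by combinatorial objects of a fixed degree, so within each homogeneous component $I$ is finite, $K$ is an honest finite invertible matrix, and the identities $KK^{-1}=\mathrm{id}$ and the appearance of the transpose $K^{\mathsf{T}}$ in passing from $a=Kc$ to $b=(K^{-1})^{\mathsf{T}}d$ are literal. The one bookkeeping point to watch is that the change of basis on the $\mathcal{A}$ side and the change of basis on the dual $\mathcal{B}$ side are related by transpose-inverse, which is precisely why the same coefficients $k_{i,j}$ reappear with the roles of the two indices interchanged in the two displayed sums.
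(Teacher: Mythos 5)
The paper offers no proof of this proposition: it is quoted from Hoffman \cite{Hoffman} as a known result, so there is no internal argument to compare yours against. Your proof is correct and self-contained. Evaluating $\langle a_m, d_j\rangle$ once through the expansion of $a_m$ in $\{c_j\}$ and once through the expansion of $d_j$ in $\{b_i\}$ gives $\ell_{j,m}=k_{m,j}$, which is the forward implication; the transpose-inverse computation $\langle a_m,b_n\rangle=\sum_j k_{m,j}(K^{-1})_{j,n}=\delta_{m,n}$ gives the converse; and uniqueness does follow from nondegeneracy of the pairing, which, as you observe, is itself forced by the existence of one dual pair (pair a vector killing all of $\mathcal{A}$ against the $c_i$'s to see its $d$-coordinates vanish). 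Your appeal to the grading to make the index set finite degree-by-degree is also the right justification for the matrix manipulations, since all bases appearing in the paper's applications (indexed by sentences or p-sentences of fixed size) are homogeneous and the change-of-basis matrices are block-diagonal by degree. The only caveat worth recording is that the converse direction silently uses the existence of at least one dual pair $(\{c_i\},\{d_i\})$ --- without it the hypothesis is vacuous --- but in the graded connected setting of the paper such a pair always exists, so this is harmless.
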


The coefficients of the multiplication and comultiplication of dual bases have a similar relationship.

\begin{prop}\label{hopf_dual_coproduct}\cite{grinberg}
The comultiplication of the basis $\{b_i\}_{i \in I}$ in $\mathcal{B}$ is uniquely defined by the multiplication of its dual basis $\{a_i\}_{i \in I}$ in $\mathcal{A}$ in the following way:  $$a_ja_k = \sum_{i \in I}c^{i}_{j,k}a_i \quad \Longleftrightarrow \quad \Delta(b_i) = \sum_{(j,k) \in I \times I}c^{i}_{j,k}b_j \otimes b_k.$$ Further, $\Delta: \mathcal{B} \rightarrow \mathcal{B} \otimes \mathcal{B}$ is an algebra homomorphism.
\end{prop}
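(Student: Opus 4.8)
The plan is to show that the structure constants $c^{i}_{j,k}$ coming from the product in $\mathcal{A}$ and the a priori unknown comultiplication coefficients of $\Delta(b_i)$ in $\mathcal{B}$ are both computed by one and the same pairing, and therefore coincide. Throughout I use the convention forced by Definition \ref{hopf_dual_mult}, that the pairing $\langle\,\cdot\,,\cdot\,\rangle\colon \mathcal{B}\otimes\mathcal{A}\to\mathbb{Q}$ is multiplicative on tensors, so on dual bases $\langle b_j\otimes b_k,\, a_{j'}\otimes a_{k'}\rangle = \langle b_j,a_{j'}\rangle\langle b_k,a_{k'}\rangle = \delta_{j,j'}\delta_{k,k'}$, where $\langle b_i,a_j\rangle=\delta_{i,j}$.

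First I would record the extraction identity. Since $\{b_j\otimes b_k\}_{(j,k)\in I\times I}$ is a basis of $\mathcal{B}\otimes\mathcal{B}$, write $\Delta(b_i)=\sum_{(j,k)} d^{i}_{j,k}\,b_j\otimes b_k$ for uniquely determined scalars $d^{i}_{j,k}$. Pairing against $a_j\otimes a_k$ and using the multiplicativity above isolates a single coefficient:
$$\langle \Delta(b_i),\, a_j\otimes a_k\rangle = \sum_{(j',k')} d^{i}_{j',k'}\,\delta_{j,j'}\delta_{k,k'} = d^{i}_{j,k}.$$
Next I would invoke the second duality axiom of Definition \ref{hopf_dual_mult}, namely $\langle b,\mu_{\mathcal{A}}(a_1,a_2)\rangle = \langle \Delta_{\mathcal{B}}(b),\, a_1\otimes a_2\rangle$, to transport this to a pairing inside $\mathcal{A}$, and then expand the product by bilinearity:
$$d^{i}_{j,k} = \langle \Delta(b_i),\, a_j\otimes a_k\rangle = \langle b_i,\, a_j a_k\rangle = \Big\langle b_i,\ \sum_{i'} c^{i'}_{j,k}\,a_{i'}\Big\rangle = \sum_{i'} c^{i'}_{j,k}\,\delta_{i,i'} = c^{i}_{j,k}.$$
This proves the forward implication, and since $\{b_j\otimes b_k\}$ is a basis the comultiplication is thereby pinned down, giving the ``uniquely defined'' claim. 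The reverse implication is the same chain read in either order: the middle term $\langle b_i,\,a_j a_k\rangle$ equals $c^{i}_{j,k}$ whenever $a_j a_k=\sum_{i'} c^{i'}_{j,k}a_{i'}$ and equals $d^{i}_{j,k}$ by the extraction identity, so the two families of constants are identical regardless of which expansion is taken as given; the biconditional merely records that one family of structure constants does double duty.

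For the final assertion I would observe that $\Delta$ being an algebra homomorphism $\mathcal{B}\to\mathcal{B}\otimes\mathcal{B}$, with the tensor-product algebra structure on $\mathcal{B}\otimes\mathcal{B}$, is exactly compatibility axiom (1) of Definition \ref{bialgebra} applied to the bialgebra $\mathcal{B}$: the map $(\mu\otimes\mu)\circ(\mathrm{id}\otimes T\otimes \mathrm{id})\circ(\Delta\otimes\Delta)$ is precisely the product on $\mathcal{B}\otimes\mathcal{B}$, so $\Delta\circ\mu=\mu_{\mathcal{B}\otimes\mathcal{B}}\circ(\Delta\otimes\Delta)$, and no work beyond unwinding this notation is required. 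I do not expect a genuine obstacle here; the only thing to watch is bookkeeping, namely keeping the order of the arguments in $\langle\,\cdot\,,\cdot\,\rangle$ consistent with the domain $\mathcal{B}\otimes\mathcal{A}$ and confirming that the tensor pairing is the multiplicative one, so that pairing against $a_j\otimes a_k$ genuinely extracts a single coefficient rather than a sum.
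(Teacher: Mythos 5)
Your proof is correct: the paper does not actually prove this proposition (it is stated as an imported result from \cite{grinberg}), and your duality-pairing argument --- expand $\Delta(b_i)$ in the basis $\{b_j\otimes b_k\}$, extract each coefficient by pairing against $a_j\otimes a_k$, and transport through the axiom $\langle b, \mu_{\mathcal{A}}(a_1,a_2)\rangle = \langle \Delta_{\mathcal{B}}(b), a_1\otimes a_2\rangle$ --- is exactly the standard proof found in that reference, with both implications and uniqueness following correctly from the same coefficient identity. The only detail worth adding is that for $\Delta$ to be an algebra homomorphism you need unit preservation $\Delta(1_{\mathcal{B}})=1_{\mathcal{B}}\otimes 1_{\mathcal{B}}$ as well, which is axiom (3) of Definition \ref{bialgebra}, not only the multiplicativity axiom (1) that you cite.
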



We conclude this subsection by reviewing the Hopf algebra structure of $Sym$ from \cite{grinberg} and \cite{Zab}. Multiplication  on the monomial symmetric functions is expressed, for $\lambda \vdash n$ and $\mu \vdash k$, as \begin{equation}\label{mon_mult}
    m_{\lambda} m_{\mu} = \sum_{\nu \vdash n + k} r^{\nu}_{\lambda, \mu} m_{\nu}
\end{equation} where $r^{\nu}_{\lambda, \mu}$ is the number of pairs of sequences $(\alpha, \beta)$ with $\alpha_i, \beta_i \geq 0$ where $sort(\alpha) = \lambda$ and $sort(\beta)= \mu$ where $\nu = (\alpha_1 + \beta_1, \alpha_2 + \beta_2, \ldots)$.  Comultiplication is given by \begin{equation}\label{mon_comult}
    \Delta(m_{\lambda}) = \sum_{\mu \sqcup \nu = \lambda} m_{\mu} \otimes m_{\nu},
\end{equation} where $\mu \sqcup \nu$ the multiset union of the parts of $\mu$ and $\nu$ sorted into a partition. Multiplication on the complete homogeneous symmetric functions is expressed as 
\begin{equation}\label{h_mult}
    h_{\lambda}h_{\mu} = h_{sort(\lambda \cdot \mu)},
\end{equation} for partitions $\lambda$ and $\mu$. Comultiplication is expressed as 
\begin{equation}\label{h_comult}
\Delta(h_n) = \sum_{k=0}^n h_k \otimes h_{n-k}.
\end{equation}

\subsection{Quasisymmetric functions}
Given a weak composition $\alpha = (\alpha_1, \alpha_2, \ldots)$, write $x^{\alpha} = x_1^{\alpha_1} x_2 ^{\alpha_2} \cdots$. If $\alpha$ has $k$ non-zero entries given by $\alpha_{i_1} = a_1$, $\alpha_{i_2} = a_2$, $\ldots$, $\alpha_{i_k} = a_k$ with $i_1 < \cdots < i_k $, then $x^{\alpha} = x_{i_1}^{a_1}x_{i_2}^{a_2} \cdots x_{i_k}^{a_k}$.  A \emph{quasisymmetric function} $f(x)$ is a formal power series  
$$f(x) = \sum_{\alpha} b_{\alpha}x^{\alpha},$$ where the sum runs over weak compositions $\alpha$ and the coefficients of the monomials $x_{i_1}^{a_1}\ldots x_{i_k}^{a_k}$ and $x_{j_1}^{a_1}\ldots x_{j_k}^{a_k}$ are equal if $i_1 < \ldots  < i_k$ and $j_1 < \ldots  < j_k$. 

For a composition $\alpha$, the \textit{monomial quasisymmetric function} $M_{\alpha}$ is defined by $$M_{\alpha} = \sum_{i_1<\ldots <i_k}x_{i_1}^{\alpha_1}\ldots x_{i_k}^{\alpha_k},$$ where the sum runs over strictly increasing sequences of $k$ positive integers $i_1, \ldots, i_k \in \mathbb{Z}_{>0}$.
The \textit{fundamental quasisymmetric function} $F_{\alpha}$ is defined as $$F_{\alpha} = \sum_{\beta \preceq \alpha}M_{\beta}.$$ Then, $M_{\alpha} = \sum_{\beta \preceq \alpha} (-1)^{\ell(\alpha)-\ell(\beta)} F_{\beta}$ by the M\"{o}bius inversion.  
\begin{ex} The monomial quasisymmetric function indexed by $(1,2)$ is
$$M_{(1,2)} = \sum_{i<j}x_ix^2_j = x_1x^2_2 + x_1x^2_3 + \ldots  + x_2x^2_3 + x_2x^2_4 + \ldots  + x_3x^2_4 + x_3x^2_5 + \ldots.$$ The expansion of $F_{(3)}$ into the monomial basis is
$$F_{(3)} = M_{(3)}+M_{(2,1)}+M_{(1,2)} + M_{(1,1,1)}.$$
\end{ex}

The quasisymmetric functions form a Hopf algebra denoted $QSym$. The monomial basis has multiplication and comultiplication inherited from the quasishuffle and concatenation operations on compositions. The \emph{quasishuffle} $ \Qshuffle$ of compositions is defined as the sum of shuffles of $\alpha = (\alpha_1, \ldots , \alpha_k)$ and $\beta = (\beta_1, \ldots , \beta_l)$ where any consecutive pairs $\alpha_i$ and $\beta_j$ (in that order) may be replaced with $\alpha_i + \beta_j$.  Note that the same composition may appear multiple times in the quasishuffle. Multiplication of monomial functions is given by $$M_{\alpha}M_{\beta}= \sum_{\gamma}M_{\gamma},$$ where the sum runs over all $\gamma$ such that $\gamma$ is a summand in $\alpha \Qshuffle \beta$, with multiplicity. Comultiplication is expressed as $$\Delta(M_{\alpha}) = \sum_{\beta \cdot \gamma = \alpha}M_{\beta} \otimes M_{\gamma},$$ where the sum runs over all compositions $\beta, \gamma$ where $\beta \cdot \gamma = \alpha$.

\begin{ex} The following expressions show the multiplication and comultiplication on monomial quasisymmetric functions expanded in terms of the monomial basis:
$$M_{(1)}M_{(2,1)} = M_{(1,2,1)}+2M_{(2,1,1)}+M_{(3,1)}+M_{(2,2)},$$
$$\Delta(M_{(2,1,1)}) = 1 \otimes M_{(2,1,1)} + M_{(2)}\otimes M_{(1,1)} + M_{(2,1)} \otimes M_{(1)} + M_{(2,1,1)}\otimes 1.$$
\end{ex} 

For more on the quasisymmetric functions, see \cite{mason}.

\subsection{Noncommutative symmetric functions}
The Hopf algebra dual of $QSym$ is the algebra of \emph{noncommutative symmetric functions}, denoted by $\NSym$.  It can be defined as the algebra with noncommutative generators $\{H_1,H_2, \ldots \}$ and no relations,
$$\NSym = \mathbb{Q}  \left\langle H_1, H_2, \ldots \right\rangle.$$ 
For a composition $\alpha = (\alpha_1, \ldots, \alpha_k)$, define the \textit{complete homogeneous noncommutative symmetric function} as $H_{\alpha} = H_{\alpha_1}H_{\alpha_2} \ldots H_{\alpha_k}$. The set $\{H_{\alpha}\}_{\alpha}$ forms a basis of $\NSym$ called the \textit{complete homogeneous basis}. $\NSym$ and $QSym$ are dually paired by the inner product defined by $\langle H_{\alpha}, M_{\beta} \rangle = \delta_{\alpha, \beta}$ for all compositions $\alpha, \beta$. This inner product has all the properties listed in Definition \ref{hopf_dual_mult} and is the main tool for translating between $QSym$ and $\NSym$. We translate from $NSym$ to $Sym$ using the \emph{forgetful map} $\chi: \NSym \rightarrow Sym$, which is defined as  $$\chi(H_{\alpha}) = h_{sort(\alpha)},$$ extended linearly to map all elements in $\NSym$ to their commutative image in $Sym$. The forgetful map is a morphism that is adjoint to the inclusion of $Sym$ to $QSym$.

Given a composition $\alpha$, the \emph{ribbon noncommutative symmetric function} is defined as $$R_{\alpha}=\sum_{\alpha \preceq \beta }(-1)^{\ell{(a)}-\ell{(\beta)}}H_{\beta}, $$  and thus we can write
$ H_{\beta} = \sum_{\alpha \preceq \beta} R_{\alpha}.$ The ribbon functions, which are a basis of $NSym$, are dual to the fundamental basis of $QSym$, meaning $\langle R_{\alpha}, F_{\beta} \rangle = \delta_{\alpha, \beta}$. Multiplication in $\NSym$ on the ribbon and complete homogeneous functions is expressed as 
\begin{equation*}
    H_{\alpha} H_{\beta} = H_{\alpha \cdot \beta} \text{\qquad and \qquad} R_{\alpha}R_{\beta} = R_{\alpha \cdot \beta} + R_{\alpha \odot \beta}.
\end{equation*}
Given a composition $\alpha$, the \emph{elementary noncommutative symmetric function} is defined by $$E_{\alpha} = \sum_{\beta \preceq \alpha} (-1)^{|\alpha|-\ell(\beta)}H_{\beta},$$  and thus we can write $ H_{\beta} = \sum_{\alpha \preceq \beta} (-1)^{|\beta|- \ell(\alpha)} E_{\alpha}.$

For more details on the noncommutative symmetric functions see \cite{gelfand}.

\subsection{Doliwa's colored $\QSym_A$ and $\NSym_A$} \label{colorsection}
The algebra of noncommutative symmetric functions has a natural generalization to an algebra of sentences. Dually, the algebra of quasisymmetric functions has a generalization to partially commutative variables. These two algebras, $NSym_A$ and $QSym_A$, were introduced by Doliwa in \cite{doliwa21}.

Let $A = \{a_1, a_2, \ldots  , a_m\}$ be an alphabet of \textit{colors}.  We refer to finite sequences of colors in $A$ as words and write them without separating commas. 
We refer to finite sequences of non-empty words as \textit{sentences}. 
The $\emptyset$ denotes either the empty word or the empty sentence. 
A \emph{weak sentence} may include empty words, and is sometimes considered to be of infinite length but must have a finite number of non-empty words. 
The \textit{size} of a word $w$, denoted $|w|$, is the total number of colors it contains. 
Note that we are counting repeated colors here unless we say ``the number of unique colors''. 
The \textit{size} of a sentence $I=(w_1,w_2, \ldots , w_k)$, denoted $|I|$, is also the number of colors it contains.  
The \textit{length} of a sentence $I$, denoted $\ell(I)$, is the number of words in $I$. 
The \emph{concatenation} of two words $w = a_1\cdots a_k$ and $v=b_1\cdots b_j$ is given by $w \cdot v = a_1 \cdots a_kb_1 \cdots b_j$. This may also be denoted simply by $wv$.  
Concatenating every word in a sentence $I$ results in a word called the \textit{maximal word} of $I$, denoted $w(I)=w_1w_2\ldots w_k$. The \textit{word lengths} of $I$ are $w \ell(I)=(|w_1|, \ldots , |w_k|)$, which gives the underlying composition of the sentence.

\begin{ex} 
Let $a,b,c \in A$ and let $w_1 = cb$, $w_2 = aaa$, and $w_3 = c$ be words.  Consider the sentence $I = (w_1,w_2,w_3) = (cb,aaa,c)$.  Then, $|w_1|=2$, $|w_2|=3$, $|w_3|=1$, and $|I|=6$. The length of $I$ is $\ell(I)=3$ and the word length of $I$ is $w \ell(I)=(2,3,1)$.  The maximal word of $I$ is $w(I)=cbaaac$.  
\end{ex}

 A sentence $I$ is a refinement of a sentence $J$, written $ I \preceq J$, if $w(I)=w(J)$ and $w \ell(I) \preceq w \ell(J)$. In other words, if $J$ can be obtained by concatenating some adjacent words of $I$. In this case, $I$ is called a \emph{refinement} of $J$ and $J$ a \emph{coarsening} of $I$.

 Assume that $A$ has a total order $\leq$ and define the following \emph{lexicographic order} $\preceq_{\ell}$ on words. For words $w=a_1 \ldots a_k$ and $v=b_1 \ldots b_j$, we say $w \leq_{\ell} v$ if $a_i < b_i$ for the first positive integer $i$ such that $a_i \not= b_i$. If no such $i$ exists, then $w = v$. 

\begin{ex} Let $A = \{a < b < c\}$ and $I = (bac)$.  The refinements of $I$ are $(bac)$, $(b,ac)$, $(ba,c)$, and $(b,a,c)$. Under lexicographic order, $abc \preceq_{\ell} acb \preceq_{\ell} bac \preceq_{\ell} bca \preceq_{\ell} cab \preceq_{\ell} cba$.
\end{ex} 

The \emph{concatenation} of two sentences $I = (w_1, \ldots, w_k)$ and $J = (v_1, \ldots, v_h)$ is $I \cdot J = (w_1,\ldots ,w_k,v_1,\ldots ,v_h)$.  Their \emph{near-concatenation} is $I \odot J = (w_1,\ldots ,w_k \cdot v_1,\ldots ,v_h)$ where $w_k$ and $v_1$ are also concatenated. If $a_{i}$ is the $i^{\text{th}}$ entry in $I$ and $a_{i+1}$ is the $(i+1)^{\text{th}}$ entry in $I$, we say that $I$ \textit{splits} after the $i^{\text{th}}$ entry if $a_i \in w_j$ and $a_{i+1} \in w_{j+1}$ for some $j \in [k]$.

\begin{ex} Let $I = (bc,a)$ and $J = (b, ac)$.  Then, $I \cdot J = (bc,a,b,ac)$ and $I \odot J = (bc,ab,ac)$.  The sentence $(bc,a,b,ac)$ splits after the $2^{\text{nd}}$, $3^{\text{rd}}$ and $4^{\text{th}}$ entries.
\end{ex}

The \emph{reversal} of $I= (w_1,\ldots ,w_k)$ is $I^r = (w_k, w_{k-1},\ldots ,w_1)$. The \emph{complement} of $I$, denoted $I^c$, is the unique sentence such that $w(I)=w(I^c)$ and $I^c$ splits exactly where $I$ does not. Both maps are involutions on sentences.

\begin{ex}
Let $I = (ab,cde)$.  Then $I^r = (cde,ab)$ and $I^c = (a,bc,d,e).$
\end{ex}

The sentence obtained by removing all empty words from a weak sentence $J$, denoted by $\tilde{J}$, is called the \emph{flattening} of a sentence. Let a weak sentence $J = (v_1,\ldots ,v_k)$ and a sentence $I = (w_1,\ldots ,w_k)$. We say that $J$ is  \emph{right-contained} in $I$, or $J \subseteq_R I$, if there exists a weak sentence $I/_R J = (u_1,\ldots ,u_k)$ such that $w_i = u_iv_i$ for every $i \in [k]$. We say that $J$ is \emph{left-contained} in $I$, or $J \subseteq_L I$, if there exists a weak sentence $ I/_L J=(q_1, \ldots , q_k)$ such that $w_i = v_iq_i$ for every $i \in [k]$. Note that right-containment is denoted $I/J$ in \cite{doliwa21}.

\begin{ex}
Let $I = (abc,def)$, $J = ( c, ef)$, and $K = (a,de)$. Then $J \subseteq_R I$ and $I/_R J = (ab,d)$, while $K \subseteq_L I$ and $I/_L K = (c,f)$. Given the weak sentence $I = (\emptyset, ab, \emptyset, c)$, the flattening of $I$ is $\tilde{I}=(ab,c)$.
\end{ex}

\subsubsection{The Hopf algebra of sentences and colored noncommutative symmetric functions}
The algebra $\NSym_A$ is defined as the algebra freely generated over noncommuting elements $H_w$ for any word in $A$. There is a Hopf algebra structure on $NSym_A$ expressed over $\{H_I\}_I$ as follows:
$$H_I \cdot H_J = H_{I \cdot J}, \quad \quad \quad \Delta(H_I) = \sum_{J \subseteq_R I}H_{\widetilde{I/_R J}} \otimes H_{\tilde{J}}, \quad \quad \quad S(H_I) = \sum_{J \preceq I^r}(-1)^{\ell(J)}H_J.$$ 
The reversal and complement operations extend as $H_I^r = H_{I^r}$ and $H^c_I=H_{I^c}$.

\begin{defn} The \emph{uncoloring} map $\upsilon : \NSym_A \rightarrow \NSym$ is defined by $\upsilon (H_{I}) = H_{w\ell(I)}$ and extended linearly. If the alphabet $A$ only contains one color, then $\upsilon $ is an isomorphism.
\end{defn}

For more details, see \cite{Dau23, doliwa21}.

\subsubsection{The colored quasisymmetric functions and duality} \label{QsymA}

The dual algebra to 
$NSym_A$ is called $QSym_A$, and is constructed using partially commutative colored variables. For a color $a \in A$, consider the set of infinite colored variables $x_a = \{x_{a,1}, x_{a,2}, \ldots \}$ and let $\displaystyle x_A = \bigcup_{a \in A} x_a$. These variables are partially commutative in the sense that they only commute if the second indices are different.  That is, for $a,b \in A$,
$$x_{a,i}x_{b,j}=x_{b,j}x_{a,i} \text{ for } i \not= j \qquad \text{and}\qquad x_{a,i}x_{b,i} \not= x_{b,i}x_{a,i} \text{ if } a \not= b.$$  It follows that every monomial in variables $x_A$ can be uniquely re-ordered so that its second indices are weakly increasing and any first indices sharing the same second index can be combined into a single word. Each monomial can be assocaited with a sentence $(w_1, \ldots, w_m)$ defined by its re-ordered and combined form $x_{w_1,j_1}\cdots x_{w_m, j_m}$ where $j_1 < \ldots < j_m$. Similar notions of coloring, albeit with different assumptions of partial commutativity, can be found in \cite{other_color, poirier}.

\begin{ex}
The monomial $x_{c,2}x_{a,3}x_{b,1}x_{a,2}$ is reordered as $ x_{b,1}x_{c,2}x_{a,2}x_{a,3}$ and combined as $x_{b,1}x_{ca,2}x_{a,3}$.  Then, the sentence associated to this monomial is $(b, ca, a)$. 
\end{ex}

$QSym_A$ is defined as the set of formal power series in $\mathbb{Q}[x_A]$ such that the coefficients of monomials associated with the same sentence are equal.

\begin{ex} The following function $f(x_A)$ is in $QSym_A$:
$$ f(x_A) = 3x_{a,1}x_{bc,2} + 3x_{a,1}x_{bc,3} + \ldots + 3x_{a,2}x_{bc,3} + 3x_{a,2}x_{bc,4} + \ldots .$$
\end{ex}

 For a sentence $I = (w_1, w_2, \ldots , w_m)$, the \emph{colored monomial quasisymmetric function} $M_I$ is defined as 
$$M_I = \sum_{1 \leq j_1<j_2<\ldots <j_m}x_{w_1,j_1}x_{w_2,j_2}\ldots x_{w_m, j_m},$$ where the sum runs over strictly increasing sequences of $m$ positive integers $j_1, \ldots, j_m \in \mathbb{Z}_{>0}$. This basis naturally extends the monomial basis of $QSym$ and is dual to the $\{H_I\}_I$ basis in $\NSym_A$.

\begin{ex} The colored monomial quasisymmetric function for the sentence $(ab,c)$ is
 $$M_{(ab,c)} = x_{ab,1}x_{c,2} + x_{ab,1}x_{c,3} + \ldots  + x_{ab,2}x_{c,3} + x_{ab,2}x_{c,4} + \ldots  + x_{ab,3}x_{c,4} + \ldots.$$ 
\end{ex}

\begin{prop} \cite{doliwa21} $QSym_A$ and $\NSym_A$ are dual Hopf algebras with the inner product $\langle H_I, M_J \rangle = \delta_{I,J}$.
\end{prop}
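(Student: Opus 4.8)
The plan is to verify the conditions of Definition \ref{hopf_dual_mult} with $\mathcal{A} = QSym_A$ (basis $\{M_J\}$) and $\mathcal{B} = \NSym_A$ (basis $\{H_I\}$), using the bilinear form determined by $\langle H_I, M_J \rangle = \delta_{I,J}$. First I would record that both algebras are graded by size $n = |I|$ and connected, and that because $A$ is finite there are only finitely many sentences of each size; hence each graded component is finite-dimensional and $\{H_I\}$, $\{M_J\}$ are genuine dual bases for a nondegenerate graded pairing. This identifies $QSym_A$ with the graded dual of $\NSym_A$ and lets me invoke Proposition \ref{hopf_dual_coproduct}: it suffices to check that the product structure constants on each side agree with the coproduct structure constants of the dual basis on the other, for both the $\NSym_A$-product/$QSym_A$-coproduct pair and the $QSym_A$-product/$\NSym_A$-coproduct pair, after which the unit, counit, and antipode conditions follow.

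The concatenation pairing is the easy half. Since $H_I H_J = H_{I \cdot J}$, the first condition of Definition \ref{hopf_dual_mult} reads $\delta_{I \cdot J, K} = \langle H_I \otimes H_J, \Delta_{QSym_A}(M_K) \rangle$, which holds precisely when the comultiplication on $QSym_A$ is deconcatenation, $\Delta_{QSym_A}(M_K) = \sum_{K = I \cdot J} M_I \otimes M_J$; so I would take this to be the coalgebra structure on $QSym_A$ and note that the concatenation product of $\NSym_A$ and the deconcatenation coproduct of $QSym_A$ have matching structure constants. The unit and counit conditions are immediate: the unit of $\NSym_A$ is $H_\emptyset$ and the counit of $QSym_A$ extracts the coefficient of the empty sentence, so both $\langle H_\emptyset, M_J \rangle$ and $\epsilon_{QSym_A}(M_J)$ equal $\delta_{\emptyset, J}$, and symmetrically for $\epsilon_{\NSym_A}$ paired against $1_{QSym_A}$.

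The combinatorial heart is the other pairing, $\langle H_K, M_I M_J \rangle = \langle \Delta_{\NSym_A}(H_K), M_I \otimes M_J \rangle$. Expanding $\Delta(H_K) = \sum_{L \subseteq_R K} H_{\widetilde{K/_R L}} \otimes H_{\tilde{L}}$ shows the right-hand side equals the number of weak sentences $L \subseteq_R K$ with $\widetilde{K/_R L} = I$ and $\tilde{L} = J$. For the left-hand side I would multiply the power series $M_I$ and $M_J$ directly: in a product of basis monomials, each second index $j$ appearing in the resulting monomial comes solely from $M_I$, solely from $M_J$, or is shared, and a shared index produces the colored variable $x_{w_a v_b, j}$, concatenating the $I$-word $w_a$ (written on the left) as a prefix of the $J$-word $v_b$. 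Collecting these monomials into $M_K$ for a fixed sentence $K$ therefore amounts to splitting each word $k_i$ of $K$ into a prefix read off from $I$ and a suffix read off from $J$, which is exactly the data of a weak sentence $L \subseteq_R K$ with $\widetilde{K/_R L} = I$ and $\tilde{L} = J$. Establishing this bijection between terms of $M_I M_J$ and right-containments is the step I expect to be the main obstacle, since it requires tracking the partial commutativity of the $x_{a,i}$ and the prefix/suffix convention carefully; once it is in place the two structure-constant counts coincide.

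Finally, the antipode compatibility $\langle S_{\NSym_A}(H_I), M_J \rangle = \langle H_I, S_{QSym_A}(M_J) \rangle$ requires no separate combinatorial work. Both $\NSym_A$ and $QSym_A$ are graded connected bialgebras, hence Hopf algebras with uniquely determined antipodes by Proposition \ref{bi_to_hopf}, each antipode being the convolution inverse of the identity. Taking adjoints in the bialgebra-duality relations already verified shows that the adjoint of $S_{QSym_A}$ satisfies the antipode axiom on $\NSym_A$; by uniqueness of the antipode it must equal $S_{\NSym_A}$, so the last condition holds automatically (and can be cross-checked against the given formula $S(H_I) = \sum_{J \preceq I^r} (-1)^{\ell(J)} H_J$). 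This completes the verification that $QSym_A$ and $\NSym_A$ are dually paired Hopf algebras.
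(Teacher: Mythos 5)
The paper contains no proof of this proposition to compare against: it is stated as a background result imported verbatim from Doliwa \cite{doliwa21}, so your argument can only be judged on its own merits. On those terms it is correct and follows the standard route for establishing such a duality. The setup is right (grading by size, finiteness of $A$ giving finite-dimensional graded components, hence a genuine nondegenerate graded pairing with $\{H_I\}$, $\{M_J\}$ as dual bases), and the concatenation/deconcatenation half is immediate exactly as you say --- note this matches the paper's own convention, where $\Delta(M_I)=\sum_{I=J\cdot K}M_J\otimes M_K$ is \emph{defined} by duality, so there is no hidden circularity. The quasishuffle computation is indeed the real content, and you handle its one delicate point correctly: partial commutativity forces a shared second index to produce $x_{w_a,j}x_{v_b,j}=x_{w_av_b,j}$ with the $I$-word as prefix, which is precisely what makes the count of terms $M_K$ in $M_IM_J$ agree with the number of weak sentences $L\subseteq_R K$ satisfying $\widetilde{K/_R L}=I$ and $\tilde{L}=J$, i.e.\ with the structure constants of $\Delta(H_K)$ (prefixes in the first tensor factor, suffixes in the second). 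The antipode-compatibility-by-uniqueness argument for graded connected bialgebras is likewise standard and correct. The only scope caveat: your appeals to Propositions \ref{hopf_dual_coproduct} and \ref{bi_to_hopf} presuppose that $\NSym_A$ is itself a bialgebra (coassociativity of its coproduct and its multiplicativity), which a fully self-contained proof should record; this is easy, since $\Delta(H_w)=\sum_{w=uv}H_u\otimes H_v$ on generators is word deconcatenation, extends multiplicatively to the stated right-containment formula, and coassociativity reduces to the three-fold deconcatenation of a word. With that line added, your verification is complete.
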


Multiplication in $QSym_A$ and comultiplication in $\NSym_A$ can be expressed using a variant of shuffling. The quasishuffle $I \Qshuffle J$ is defined as the sum of all shuffles of sentences $I$ and $J$ and of all shuffles of sentences $I$ and $J$ with any number of pairs $w_iv_j$ of consecutive words $w_i \in I$ and $v_j \in J$ concatenated.

\begin{ex} The usual shuffle operation on $(a,bc)$ and $(d,e)$ is
$$(a,bc) \shuffle (d,e) = (a,bc,d,e) + (a,d,bc,e) + (d,a,bc,e) + (a,d,e,bc) + (d,a,e,bc) + (d,e,a,bc).$$
The quasishuffle of $(a,bc)$ and $(d,e)$ is
\begin{multline*}
    (a,bc) \Qshuffle (d,e) = (a,bc,d,e) + (a,bcd,e)+(a,d,bc,e)+ (ad,bc,e)+ (a,d,bce)+ (ad,bce)+ \\ +(d,a,bc,e) + (d,a,bce) + (a,d,e,bc) + (ad,e,bc) + (d,a,e,bc) + (d,ae,bc) + (d,e,a,bc).
\end{multline*}
\end{ex} 

\noindent  Multiplication in $QSym_A$ is dual to the comultiplication $\Delta$ in $\NSym_A$, thus it is given by $$M_IM_J = \sum_K M_K,$$ where  the sum runs over all $K$ in $I \Qshuffle J$. Similaryl, comultiplication in $QSym_A$ is defined by multiplication in $\NSym_A$,
\begin{equation*}
    \Delta(M_I) = \sum_{I=J\cdot K}M_J \otimes M_K,
\end{equation*}
where the sum runs over all sentences $J$ and $K$ such that $I = J \cdot K$. Finally, the antipode $S^*$ in $QSym_A$ is given by $$S^*(M_I) = (-1)^{\ell(I)}\sum_{J^r \succeq I}M_J.$$ 

\begin{defn} The \emph{uncoloring} map $\upsilon : QSym_A \rightarrow QSym$ is defined by $\upsilon (x_{w_1,1}\cdots x_{w_k,k}) = x_1^{|w_1|}\cdots x_k^{|w_k|}$ and extends linearly. If the alphabet $A$ contains only one color, $\upsilon $ is an isomorphism.
\end{defn}

Note that we use $\upsilon $ to denote the uncoloring maps on both $QSym_A$ and $\NSym_A$, and often refer to these together as if they are one map.

\section{The algebra of p-sentences: $PSym_A$}\label{sec:psyma} 

Let $A$ be an alphabet with a total order. Define the \emph{graded lexicographic order} on words by $v \preceq_{g\ell} w$ if $\ell(v) < \ell(w)$ or if both $\ell(v)=\ell(w)$ and $v \preceq_{\ell} w$. A \emph{\psent}is a sentence $P = (w_1, \ldots, w_k)$ such that $w_1 \succeq_{g\ell} \cdots \succeq_{g\ell} w_k$, in other words, a sorted sentence. Given a sentence $I = (v_1, \ldots, v_j)$, let $sort(I)$ be the \psent obtained by sorting the words in $I$ into graded lexicographic order. 
Given a weak sentence $K = (u_1, \ldots, u_m)$, let $sort(K) = sort(\tilde{K})$ and let $\sigma(K)$ be $(u_{\sigma(1)}, \ldots, u_{\sigma(m)})$ for $\sigma \in S_m$. 
Let $PSent_A$ denote the set of \psents for the alphabet $A$. Let $WSent_A$ denote the set of weak sentences and $Sent_A$ denote the set of sentences. 
\begin{ex}
  The sentence $P = (abb,cab,ba,cc,a,b)$ is a p-sentence. Given the sentence $I = (c,aba,bc)$, the associated p-sentence is $sort(I)=(aba,bc,c)$. Given the permutation $\sigma = 231$, we have $\sigma(I) = (bc,c,aba)$.
\end{ex}

\begin{defn}
Define the algebra $PSym_A$ as the algebra generated by $\{h_w\}_w$ for all words $w$ in the alphabet $A$ where generators commute, that is $h_wh_v = h_vh_w$.   We write $$PSym_A = \mathbb{Q}[h_w : \text{words } w].$$  
For a \psent $P = (w_1, \ldots, w_k)$, we define the \emph{colored complete homogeneous symmetric function} as $$h_P = h_{w_1} h_{w_2} \ldots h_{w_k}.$$
\end{defn}

We have constructed this algebra so that the set of colored complete homogeneous $A$-symmetric functions is a basis of $PSym_A$. Using this basis, we show that $PSym_A$ admits a Hopf algebra structure.

\begin{thm}
    $PSym_A$ is a graded Hopf algebra with multiplication given by
    $$h_P h_Q = h_{sort(P \cdot Q)},$$
    the natural unity map $u(k) = k \cdot 1$, the comultiplication given by
    $$\Delta(h_Q) = \sum_{J \subseteq_R Q} h_{sort(Q/_R J)} \otimes h_{sort(J)},$$ and the counit $$\epsilon(h_P) = \begin{cases} 1 & \text{ if } P = \emptyset,\\
0 & \text{ otherwise.}
\end{cases}.$$ 
\end{thm}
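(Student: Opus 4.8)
The plan is to avoid checking the bialgebra axioms from scratch and instead to transport them from $\NSym_A$ along the uncoloring-type surjection $\rho\colon \NSym_A \to PSym_A$ defined on the complete homogeneous basis by $\rho(H_I) = h_{sort(I)}$ and extended linearly. First I would record that the algebra structure presents no difficulty: by construction $PSym_A = \mathbb{Q}[h_w]$ is a commutative polynomial algebra with unit $h_\emptyset$, the $\{h_P\}_{P \in PSent_A}$ form a basis, and the stated product $h_P h_Q = h_{sort(P\cdot Q)}$ is exactly polynomial multiplication, so associativity, commutativity, and the unit axioms are automatic. What remains is to install the coalgebra and compatibility axioms, and for this the map $\rho$ is the main device.

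Next I would verify that $\rho$ is a surjective bialgebra morphism. Surjectivity is clear since $h_P = \rho(H_P)$. For the product, $\rho(H_I H_J) = \rho(H_{I\cdot J}) = h_{sort(I\cdot J)}$, which equals $h_{sort(sort(I)\cdot sort(J))} = \rho(H_I)\rho(H_J)$ because sorting depends only on the underlying multiset of words. The crux is that $\rho$ intertwines the coproducts, i.e. $\Delta\circ\rho = (\rho\otimes\rho)\circ\Delta_{\NSym_A}$. Expanding the right side and using $sort(\tilde K) = sort(K)$ gives $\sum_{J\subseteq_R I} h_{sort(I/_R J)}\otimes h_{sort(J)}$, so the identity reduces to the combinatorial claim that the defining sum for $\Delta(h_{sort(I)})$ is \emph{sort-invariant}: enumerating right-containments $J\subseteq_R I$ amounts to choosing a suffix of each word of $I$, and permuting the words of $I$ merely reindexes these choices while leaving each summand $h_{sort(\cdots)}\otimes h_{sort(\cdots)}$ unchanged, since $sort$ ignores word order. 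This bijection of suffix-choices is the heart of the proof and the step I expect to require the most care. The counit is intertwined because $\epsilon(h_{sort(I)}) = 1$ precisely when $I=\emptyset$, which agrees with $\epsilon_{\NSym_A}(H_I)$.

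With $\rho$ a surjective bialgebra morphism, I would transport each remaining axiom. Coassociativity follows because $(\Delta\otimes\mathrm{id})\Delta\circ\rho$ and $(\mathrm{id}\otimes\Delta)\Delta\circ\rho$ both equal $(\rho\otimes\rho\otimes\rho)$ applied to the corresponding iterated coproduct in $\NSym_A$, and these agree by coassociativity there; surjectivity of $\rho$ then forces the identity on all of $PSym_A$. The counit axioms and the fact that $\Delta$ and $\epsilon$ are algebra maps transport identically, e.g. $\Delta(h_P h_Q) = \Delta\rho(H_P H_Q) = (\rho\otimes\rho)\Delta_{\NSym_A}(H_P H_Q) = \Delta(h_P)\Delta(h_Q)$ using that $\Delta_{\NSym_A}$ is an algebra map. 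This establishes that $PSym_A$ is a bialgebra in the sense of Definition \ref{bialgebra}.

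Finally I would grade $PSym_A$ by color-count, setting $\deg h_P = |P|$. The product satisfies $|sort(P\cdot Q)| = |P|+|Q|$, and for $J\subseteq_R Q$ one has $|Q/_R J| + |J| = |Q|$ since each word splits as $w_i = u_i v_i$; hence multiplication and comultiplication respect the grading. The degree-zero part is $\mathbb{Q}\,h_\emptyset \cong \mathbb{Q}$, so $PSym_A$ is connected, and Proposition \ref{bi_to_hopf} upgrades it to a Hopf algebra with the antipode determined by \eqref{anti_comult}. The only genuinely combinatorial obstacle is the sort-invariance of the coproduct sum; every other step is either the free-polynomial structure of the algebra or the formal transfer of axioms through the surjection $\rho$.
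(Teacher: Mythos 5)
Your proposal is correct, but it takes a genuinely different route from the paper. The paper proves the theorem head-on: it expands both iterated coproducts $(\mathrm{id}\otimes\Delta)\Delta(h_Q)$ and $(\Delta\otimes\mathrm{id})\Delta(h_Q)$ and constructs an explicit reindexing bijection between pairs of weak sentences $(Y,Z)$ and $(X,Y)$ to establish coassociativity, then directly checks the counit axioms and the four bialgebra compatibility conditions of Definition \ref{bialgebra}, and finally invokes gradedness and connectedness via Proposition \ref{bi_to_hopf}. You instead front-load the surjection $\rho(H_I)=h_{sort(I)}$ (which is exactly the paper's colored forgetful map $\chi$, introduced there only \emph{after} this theorem) and transport every coalgebra and compatibility axiom from $\NSym_A$ through it; your crux is the sort-invariance identity $\Delta(h_{sort(I)})=\sum_{J\subseteq_R I}h_{sort(I/_R J)}\otimes h_{sort(J)}$, proved by the bijection $J\mapsto\sigma(J)$ for a sorting permutation $\sigma$ of the words of $I$. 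This is logically sound and non-circular: the intertwining statements are identities of linear maps and do not presuppose that the target is a bialgebra (though for that reason you should avoid calling $\rho$ a ``bialgebra morphism'' until after the transport is done --- what you verify and use are the intertwining identities themselves). Your route buys two things: the single sort-invariance bijection is appreciably lighter than the paper's double-sum reindexing with the auxiliary weak sentence $X$, and it essentially absorbs the paper's subsequent theorem that $\chi$ is a surjective Hopf morphism --- that result then follows immediately from Corollary \ref{hopf_morph}, whereas the paper proves the intertwining identity there with little justification. What the paper's direct proof buys in exchange is self-containedness: it never leans on the Hopf structure of $\NSym_A$, and its explicit bijections exhibit the combinatorics of right-containment inside $PSym_A$ itself.
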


\begin{proof}
    We have already defined $PSym_A$ as an algebra, so we begin by showing that $(PSym_A, \Delta, \epsilon)$ is a coalgebra. Observe that
    \begin{align}\label{sumline}
         (id \otimes \Delta) \circ \Delta(h_Q) &= (id \otimes \Delta)\left(\sum_{J \subseteq_R Q} h_{sort(Q/_R J)} \otimes h_{sort(J)}\right) \nonumber\\ 
         &=\sum_{J \subseteq_R Q} h_{sort(Q/_R J)} \otimes \left[ \sum_{K \subseteq_R sort(J)} h_{sort(sort(J)/_R K)} \otimes h_{sort(K)} \right] \nonumber\\ 
         &= \sum_{J \subseteq_R Q,} \sum_{K \subseteq_R J} h_{sort(Q/_R J)} \otimes  h_{sort(J/_R K)} \otimes h_{sort(K)}.
         \end{align}
         To prove co-associativity, we show that $(\Delta \otimes id) \circ \Delta(h_P)$ is equal to \eqref{sumline}.
         \begin{align}\label{sumline2}
        (\Delta \otimes id  ) \circ \Delta(h_Q) &= (\Delta \otimes id)\left(\sum_{Y \subseteq_R Q} h_{sort(Q/_R Y)} \otimes h_{sort(Y)}\right) \nonumber\\
        &= \sum_{Y \subseteq_R Q,} \sum_{Z \subseteq_R sort(Q/_R Y)} h_{sort(sort(Q/_R Y)/_R Z)} \otimes h_{sort(Z)} \otimes h_{sort(Y)}.
        \end{align}

We now reorganize this double sum so that the sum over $Y$ is on the inside and we have a new sum on the outside. Each term in Equation \eqref{sumline2} sum is associated with a unique pair of weak sentences $Y$ and $Z$ such that $Y \subseteq_R Q$ and $Z \subseteq_R sort(Q/_R Y)$.  
Each pair $Y$ and $Z$ can be associated with a unique pair of weak sentences $X$ and $Y$ as follows. 
For each $Y \subseteq_R Q$, pick a permutation $\sigma_Y$ such that $\sigma_Y(Q/_R Y) = sort(Q/_R Y)$, and note that $\sigma_Y(Q/_R Y) = \sigma_Y(Q) /_R \sigma_Y(Y)$. 
Then for a pair $(Y,Z)$ we can define a weak sentence $X$ such that $sort(Q /_R Y) /_R Z =( \sigma_Y(Q) /_R \sigma_Y(Y)) /_R Z = \sigma_Y(Q) /_R \sigma_Y(X)$. In other words, $\sigma_Y(X) /_R \sigma_Y(Y) = Z$. 

\begin{figure}[h!]
    \centering

 
\tikzset{
pattern size/.store in=\mcSize, 
pattern size = 5pt,
pattern thickness/.store in=\mcThickness, 
pattern thickness = 0.3pt,
pattern radius/.store in=\mcRadius, 
pattern radius = 1pt}
\makeatletter
\pgfutil@ifundefined{pgf@pattern@name@_a6umk1f39}{
\pgfdeclarepatternformonly[\mcThickness,\mcSize]{_a6umk1f39}
{\pgfqpoint{0pt}{-\mcThickness}}
{\pgfpoint{\mcSize}{\mcSize}}
{\pgfpoint{\mcSize}{\mcSize}}
{
\pgfsetcolor{\tikz@pattern@color}
\pgfsetlinewidth{\mcThickness}
\pgfpathmoveto{\pgfqpoint{0pt}{\mcSize}}
\pgfpathlineto{\pgfpoint{\mcSize+\mcThickness}{-\mcThickness}}
\pgfusepath{stroke}
}}
\makeatother

 
\tikzset{
pattern size/.store in=\mcSize, 
pattern size = 5pt,
pattern thickness/.store in=\mcThickness, 
pattern thickness = 0.3pt,
pattern radius/.store in=\mcRadius, 
pattern radius = 1pt}
\makeatletter
\pgfutil@ifundefined{pgf@pattern@name@_c6rl6fams}{
\pgfdeclarepatternformonly[\mcThickness,\mcSize]{_c6rl6fams}
{\pgfqpoint{0pt}{-\mcThickness}}
{\pgfpoint{\mcSize}{\mcSize}}
{\pgfpoint{\mcSize}{\mcSize}}
{
\pgfsetcolor{\tikz@pattern@color}
\pgfsetlinewidth{\mcThickness}
\pgfpathmoveto{\pgfqpoint{0pt}{\mcSize}}
\pgfpathlineto{\pgfpoint{\mcSize+\mcThickness}{-\mcThickness}}
\pgfusepath{stroke}
}}
\makeatother

 
\tikzset{
pattern size/.store in=\mcSize, 
pattern size = 5pt,
pattern thickness/.store in=\mcThickness, 
pattern thickness = 0.3pt,
pattern radius/.store in=\mcRadius, 
pattern radius = 1pt}
\makeatletter
\pgfutil@ifundefined{pgf@pattern@name@_m3vqgrjl0}{
\pgfdeclarepatternformonly[\mcThickness,\mcSize]{_m3vqgrjl0}
{\pgfqpoint{0pt}{-\mcThickness}}
{\pgfpoint{\mcSize}{\mcSize}}
{\pgfpoint{\mcSize}{\mcSize}}
{
\pgfsetcolor{\tikz@pattern@color}
\pgfsetlinewidth{\mcThickness}
\pgfpathmoveto{\pgfqpoint{0pt}{\mcSize}}
\pgfpathlineto{\pgfpoint{\mcSize+\mcThickness}{-\mcThickness}}
\pgfusepath{stroke}
}}
\makeatother

 
\tikzset{
pattern size/.store in=\mcSize, 
pattern size = 5pt,
pattern thickness/.store in=\mcThickness, 
pattern thickness = 0.3pt,
pattern radius/.store in=\mcRadius, 
pattern radius = 1pt}
\makeatletter
\pgfutil@ifundefined{pgf@pattern@name@_pgcud395u}{
\pgfdeclarepatternformonly[\mcThickness,\mcSize]{_pgcud395u}
{\pgfqpoint{0pt}{-\mcThickness}}
{\pgfpoint{\mcSize}{\mcSize}}
{\pgfpoint{\mcSize}{\mcSize}}
{
\pgfsetcolor{\tikz@pattern@color}
\pgfsetlinewidth{\mcThickness}
\pgfpathmoveto{\pgfqpoint{0pt}{\mcSize}}
\pgfpathlineto{\pgfpoint{\mcSize+\mcThickness}{-\mcThickness}}
\pgfusepath{stroke}
}}
\makeatother

 
\tikzset{
pattern size/.store in=\mcSize, 
pattern size = 5pt,
pattern thickness/.store in=\mcThickness, 
pattern thickness = 0.3pt,
pattern radius/.store in=\mcRadius, 
pattern radius = 1pt}
\makeatletter
\pgfutil@ifundefined{pgf@pattern@name@_3xknzeb79}{
\pgfdeclarepatternformonly[\mcThickness,\mcSize]{_3xknzeb79}
{\pgfqpoint{0pt}{-\mcThickness}}
{\pgfpoint{\mcSize}{\mcSize}}
{\pgfpoint{\mcSize}{\mcSize}}
{
\pgfsetcolor{\tikz@pattern@color}
\pgfsetlinewidth{\mcThickness}
\pgfpathmoveto{\pgfqpoint{0pt}{\mcSize}}
\pgfpathlineto{\pgfpoint{\mcSize+\mcThickness}{-\mcThickness}}
\pgfusepath{stroke}
}}
\makeatother

 
\tikzset{
pattern size/.store in=\mcSize, 
pattern size = 5pt,
pattern thickness/.store in=\mcThickness, 
pattern thickness = 0.3pt,
pattern radius/.store in=\mcRadius, 
pattern radius = 1pt}
\makeatletter
\pgfutil@ifundefined{pgf@pattern@name@_ya59ch2pd}{
\pgfdeclarepatternformonly[\mcThickness,\mcSize]{_ya59ch2pd}
{\pgfqpoint{0pt}{-\mcThickness}}
{\pgfpoint{\mcSize}{\mcSize}}
{\pgfpoint{\mcSize}{\mcSize}}
{
\pgfsetcolor{\tikz@pattern@color}
\pgfsetlinewidth{\mcThickness}
\pgfpathmoveto{\pgfqpoint{0pt}{\mcSize}}
\pgfpathlineto{\pgfpoint{\mcSize+\mcThickness}{-\mcThickness}}
\pgfusepath{stroke}
}}
\makeatother

 
\tikzset{
pattern size/.store in=\mcSize, 
pattern size = 5pt,
pattern thickness/.store in=\mcThickness, 
pattern thickness = 0.3pt,
pattern radius/.store in=\mcRadius, 
pattern radius = 1pt}
\makeatletter
\pgfutil@ifundefined{pgf@pattern@name@_6a2p26zyg}{
\pgfdeclarepatternformonly[\mcThickness,\mcSize]{_6a2p26zyg}
{\pgfqpoint{0pt}{-\mcThickness}}
{\pgfpoint{\mcSize}{\mcSize}}
{\pgfpoint{\mcSize}{\mcSize}}
{
\pgfsetcolor{\tikz@pattern@color}
\pgfsetlinewidth{\mcThickness}
\pgfpathmoveto{\pgfqpoint{0pt}{\mcSize}}
\pgfpathlineto{\pgfpoint{\mcSize+\mcThickness}{-\mcThickness}}
\pgfusepath{stroke}
}}
\makeatother

 
\tikzset{
pattern size/.store in=\mcSize, 
pattern size = 5pt,
pattern thickness/.store in=\mcThickness, 
pattern thickness = 0.3pt,
pattern radius/.store in=\mcRadius, 
pattern radius = 1pt}
\makeatletter
\pgfutil@ifundefined{pgf@pattern@name@_io2cnci7d}{
\pgfdeclarepatternformonly[\mcThickness,\mcSize]{_io2cnci7d}
{\pgfqpoint{0pt}{-\mcThickness}}
{\pgfpoint{\mcSize}{\mcSize}}
{\pgfpoint{\mcSize}{\mcSize}}
{
\pgfsetcolor{\tikz@pattern@color}
\pgfsetlinewidth{\mcThickness}
\pgfpathmoveto{\pgfqpoint{0pt}{\mcSize}}
\pgfpathlineto{\pgfpoint{\mcSize+\mcThickness}{-\mcThickness}}
\pgfusepath{stroke}
}}
\makeatother

 
\tikzset{
pattern size/.store in=\mcSize, 
pattern size = 5pt,
pattern thickness/.store in=\mcThickness, 
pattern thickness = 0.3pt,
pattern radius/.store in=\mcRadius, 
pattern radius = 1pt}
\makeatletter
\pgfutil@ifundefined{pgf@pattern@name@_u2a1anb7z}{
\pgfdeclarepatternformonly[\mcThickness,\mcSize]{_u2a1anb7z}
{\pgfqpoint{0pt}{-\mcThickness}}
{\pgfpoint{\mcSize}{\mcSize}}
{\pgfpoint{\mcSize}{\mcSize}}
{
\pgfsetcolor{\tikz@pattern@color}
\pgfsetlinewidth{\mcThickness}
\pgfpathmoveto{\pgfqpoint{0pt}{\mcSize}}
\pgfpathlineto{\pgfpoint{\mcSize+\mcThickness}{-\mcThickness}}
\pgfusepath{stroke}
}}
\makeatother

 
\tikzset{
pattern size/.store in=\mcSize, 
pattern size = 5pt,
pattern thickness/.store in=\mcThickness, 
pattern thickness = 0.3pt,
pattern radius/.store in=\mcRadius, 
pattern radius = 1pt}
\makeatletter
\pgfutil@ifundefined{pgf@pattern@name@_xe7k92cue}{
\pgfdeclarepatternformonly[\mcThickness,\mcSize]{_xe7k92cue}
{\pgfqpoint{0pt}{-\mcThickness}}
{\pgfpoint{\mcSize}{\mcSize}}
{\pgfpoint{\mcSize}{\mcSize}}
{
\pgfsetcolor{\tikz@pattern@color}
\pgfsetlinewidth{\mcThickness}
\pgfpathmoveto{\pgfqpoint{0pt}{\mcSize}}
\pgfpathlineto{\pgfpoint{\mcSize+\mcThickness}{-\mcThickness}}
\pgfusepath{stroke}
}}
\makeatother

 
\tikzset{
pattern size/.store in=\mcSize, 
pattern size = 5pt,
pattern thickness/.store in=\mcThickness, 
pattern thickness = 0.3pt,
pattern radius/.store in=\mcRadius, 
pattern radius = 1pt}
\makeatletter
\pgfutil@ifundefined{pgf@pattern@name@_yfkron34f}{
\pgfdeclarepatternformonly[\mcThickness,\mcSize]{_yfkron34f}
{\pgfqpoint{0pt}{-\mcThickness}}
{\pgfpoint{\mcSize}{\mcSize}}
{\pgfpoint{\mcSize}{\mcSize}}
{
\pgfsetcolor{\tikz@pattern@color}
\pgfsetlinewidth{\mcThickness}
\pgfpathmoveto{\pgfqpoint{0pt}{\mcSize}}
\pgfpathlineto{\pgfpoint{\mcSize+\mcThickness}{-\mcThickness}}
\pgfusepath{stroke}
}}
\makeatother

 
\tikzset{
pattern size/.store in=\mcSize, 
pattern size = 5pt,
pattern thickness/.store in=\mcThickness, 
pattern thickness = 0.3pt,
pattern radius/.store in=\mcRadius, 
pattern radius = 1pt}
\makeatletter
\pgfutil@ifundefined{pgf@pattern@name@_njhkjo5uj}{
\pgfdeclarepatternformonly[\mcThickness,\mcSize]{_njhkjo5uj}
{\pgfqpoint{0pt}{-\mcThickness}}
{\pgfpoint{\mcSize}{\mcSize}}
{\pgfpoint{\mcSize}{\mcSize}}
{
\pgfsetcolor{\tikz@pattern@color}
\pgfsetlinewidth{\mcThickness}
\pgfpathmoveto{\pgfqpoint{0pt}{\mcSize}}
\pgfpathlineto{\pgfpoint{\mcSize+\mcThickness}{-\mcThickness}}
\pgfusepath{stroke}
}}
\makeatother

 
\tikzset{
pattern size/.store in=\mcSize, 
pattern size = 5pt,
pattern thickness/.store in=\mcThickness, 
pattern thickness = 0.3pt,
pattern radius/.store in=\mcRadius, 
pattern radius = 1pt}
\makeatletter
\pgfutil@ifundefined{pgf@pattern@name@_1p2zk7p6h}{
\pgfdeclarepatternformonly[\mcThickness,\mcSize]{_1p2zk7p6h}
{\pgfqpoint{0pt}{-\mcThickness}}
{\pgfpoint{\mcSize}{\mcSize}}
{\pgfpoint{\mcSize}{\mcSize}}
{
\pgfsetcolor{\tikz@pattern@color}
\pgfsetlinewidth{\mcThickness}
\pgfpathmoveto{\pgfqpoint{0pt}{\mcSize}}
\pgfpathlineto{\pgfpoint{\mcSize+\mcThickness}{-\mcThickness}}
\pgfusepath{stroke}
}}
\makeatother

 
\tikzset{
pattern size/.store in=\mcSize, 
pattern size = 5pt,
pattern thickness/.store in=\mcThickness, 
pattern thickness = 0.3pt,
pattern radius/.store in=\mcRadius, 
pattern radius = 1pt}
\makeatletter
\pgfutil@ifundefined{pgf@pattern@name@_upyub9hd4}{
\pgfdeclarepatternformonly[\mcThickness,\mcSize]{_upyub9hd4}
{\pgfqpoint{0pt}{-\mcThickness}}
{\pgfpoint{\mcSize}{\mcSize}}
{\pgfpoint{\mcSize}{\mcSize}}
{
\pgfsetcolor{\tikz@pattern@color}
\pgfsetlinewidth{\mcThickness}
\pgfpathmoveto{\pgfqpoint{0pt}{\mcSize}}
\pgfpathlineto{\pgfpoint{\mcSize+\mcThickness}{-\mcThickness}}
\pgfusepath{stroke}
}}
\makeatother

 
\tikzset{
pattern size/.store in=\mcSize, 
pattern size = 5pt,
pattern thickness/.store in=\mcThickness, 
pattern thickness = 0.3pt,
pattern radius/.store in=\mcRadius, 
pattern radius = 1pt}
\makeatletter
\pgfutil@ifundefined{pgf@pattern@name@_h7get6sa2}{
\pgfdeclarepatternformonly[\mcThickness,\mcSize]{_h7get6sa2}
{\pgfqpoint{0pt}{-\mcThickness}}
{\pgfpoint{\mcSize}{\mcSize}}
{\pgfpoint{\mcSize}{\mcSize}}
{
\pgfsetcolor{\tikz@pattern@color}
\pgfsetlinewidth{\mcThickness}
\pgfpathmoveto{\pgfqpoint{0pt}{\mcSize}}
\pgfpathlineto{\pgfpoint{\mcSize+\mcThickness}{-\mcThickness}}
\pgfusepath{stroke}
}}
\makeatother

 
\tikzset{
pattern size/.store in=\mcSize, 
pattern size = 5pt,
pattern thickness/.store in=\mcThickness, 
pattern thickness = 0.3pt,
pattern radius/.store in=\mcRadius, 
pattern radius = 1pt}
\makeatletter
\pgfutil@ifundefined{pgf@pattern@name@_c793v0goe}{
\pgfdeclarepatternformonly[\mcThickness,\mcSize]{_c793v0goe}
{\pgfqpoint{0pt}{-\mcThickness}}
{\pgfpoint{\mcSize}{\mcSize}}
{\pgfpoint{\mcSize}{\mcSize}}
{
\pgfsetcolor{\tikz@pattern@color}
\pgfsetlinewidth{\mcThickness}
\pgfpathmoveto{\pgfqpoint{0pt}{\mcSize}}
\pgfpathlineto{\pgfpoint{\mcSize+\mcThickness}{-\mcThickness}}
\pgfusepath{stroke}
}}
\makeatother

 
\tikzset{
pattern size/.store in=\mcSize, 
pattern size = 5pt,
pattern thickness/.store in=\mcThickness, 
pattern thickness = 0.3pt,
pattern radius/.store in=\mcRadius, 
pattern radius = 1pt}
\makeatletter
\pgfutil@ifundefined{pgf@pattern@name@_dkmkhz45c}{
\pgfdeclarepatternformonly[\mcThickness,\mcSize]{_dkmkhz45c}
{\pgfqpoint{0pt}{-\mcThickness}}
{\pgfpoint{\mcSize}{\mcSize}}
{\pgfpoint{\mcSize}{\mcSize}}
{
\pgfsetcolor{\tikz@pattern@color}
\pgfsetlinewidth{\mcThickness}
\pgfpathmoveto{\pgfqpoint{0pt}{\mcSize}}
\pgfpathlineto{\pgfpoint{\mcSize+\mcThickness}{-\mcThickness}}
\pgfusepath{stroke}
}}
\makeatother

 
\tikzset{
pattern size/.store in=\mcSize, 
pattern size = 5pt,
pattern thickness/.store in=\mcThickness, 
pattern thickness = 0.3pt,
pattern radius/.store in=\mcRadius, 
pattern radius = 1pt}
\makeatletter
\pgfutil@ifundefined{pgf@pattern@name@_uhj0vhts6}{
\pgfdeclarepatternformonly[\mcThickness,\mcSize]{_uhj0vhts6}
{\pgfqpoint{0pt}{-\mcThickness}}
{\pgfpoint{\mcSize}{\mcSize}}
{\pgfpoint{\mcSize}{\mcSize}}
{
\pgfsetcolor{\tikz@pattern@color}
\pgfsetlinewidth{\mcThickness}
\pgfpathmoveto{\pgfqpoint{0pt}{\mcSize}}
\pgfpathlineto{\pgfpoint{\mcSize+\mcThickness}{-\mcThickness}}
\pgfusepath{stroke}
}}
\makeatother

 
\tikzset{
pattern size/.store in=\mcSize, 
pattern size = 5pt,
pattern thickness/.store in=\mcThickness, 
pattern thickness = 0.3pt,
pattern radius/.store in=\mcRadius, 
pattern radius = 1pt}
\makeatletter
\pgfutil@ifundefined{pgf@pattern@name@_6g29qa4yr}{
\pgfdeclarepatternformonly[\mcThickness,\mcSize]{_6g29qa4yr}
{\pgfqpoint{0pt}{-\mcThickness}}
{\pgfpoint{\mcSize}{\mcSize}}
{\pgfpoint{\mcSize}{\mcSize}}
{
\pgfsetcolor{\tikz@pattern@color}
\pgfsetlinewidth{\mcThickness}
\pgfpathmoveto{\pgfqpoint{0pt}{\mcSize}}
\pgfpathlineto{\pgfpoint{\mcSize+\mcThickness}{-\mcThickness}}
\pgfusepath{stroke}
}}
\makeatother

 
\tikzset{
pattern size/.store in=\mcSize, 
pattern size = 5pt,
pattern thickness/.store in=\mcThickness, 
pattern thickness = 0.3pt,
pattern radius/.store in=\mcRadius, 
pattern radius = 1pt}
\makeatletter
\pgfutil@ifundefined{pgf@pattern@name@_ba3143j47}{
\pgfdeclarepatternformonly[\mcThickness,\mcSize]{_ba3143j47}
{\pgfqpoint{0pt}{-\mcThickness}}
{\pgfpoint{\mcSize}{\mcSize}}
{\pgfpoint{\mcSize}{\mcSize}}
{
\pgfsetcolor{\tikz@pattern@color}
\pgfsetlinewidth{\mcThickness}
\pgfpathmoveto{\pgfqpoint{0pt}{\mcSize}}
\pgfpathlineto{\pgfpoint{\mcSize+\mcThickness}{-\mcThickness}}
\pgfusepath{stroke}
}}
\makeatother

 
\tikzset{
pattern size/.store in=\mcSize, 
pattern size = 5pt,
pattern thickness/.store in=\mcThickness, 
pattern thickness = 0.3pt,
pattern radius/.store in=\mcRadius, 
pattern radius = 1pt}
\makeatletter
\pgfutil@ifundefined{pgf@pattern@name@_0knsan5ho}{
\pgfdeclarepatternformonly[\mcThickness,\mcSize]{_0knsan5ho}
{\pgfqpoint{0pt}{-\mcThickness}}
{\pgfpoint{\mcSize}{\mcSize}}
{\pgfpoint{\mcSize}{\mcSize}}
{
\pgfsetcolor{\tikz@pattern@color}
\pgfsetlinewidth{\mcThickness}
\pgfpathmoveto{\pgfqpoint{0pt}{\mcSize}}
\pgfpathlineto{\pgfpoint{\mcSize+\mcThickness}{-\mcThickness}}
\pgfusepath{stroke}
}}
\makeatother

 
\tikzset{
pattern size/.store in=\mcSize, 
pattern size = 5pt,
pattern thickness/.store in=\mcThickness, 
pattern thickness = 0.3pt,
pattern radius/.store in=\mcRadius, 
pattern radius = 1pt}
\makeatletter
\pgfutil@ifundefined{pgf@pattern@name@_d2l82x0zk}{
\pgfdeclarepatternformonly[\mcThickness,\mcSize]{_d2l82x0zk}
{\pgfqpoint{0pt}{0pt}}
{\pgfpoint{\mcSize+\mcThickness}{\mcSize+\mcThickness}}
{\pgfpoint{\mcSize}{\mcSize}}
{
\pgfsetcolor{\tikz@pattern@color}
\pgfsetlinewidth{\mcThickness}
\pgfpathmoveto{\pgfqpoint{0pt}{0pt}}
\pgfpathlineto{\pgfpoint{\mcSize+\mcThickness}{\mcSize+\mcThickness}}
\pgfusepath{stroke}
}}
\makeatother

 
\tikzset{
pattern size/.store in=\mcSize, 
pattern size = 5pt,
pattern thickness/.store in=\mcThickness, 
pattern thickness = 0.3pt,
pattern radius/.store in=\mcRadius, 
pattern radius = 1pt}
\makeatletter
\pgfutil@ifundefined{pgf@pattern@name@_ylm12zkue}{
\pgfdeclarepatternformonly[\mcThickness,\mcSize]{_ylm12zkue}
{\pgfqpoint{0pt}{0pt}}
{\pgfpoint{\mcSize+\mcThickness}{\mcSize+\mcThickness}}
{\pgfpoint{\mcSize}{\mcSize}}
{
\pgfsetcolor{\tikz@pattern@color}
\pgfsetlinewidth{\mcThickness}
\pgfpathmoveto{\pgfqpoint{0pt}{0pt}}
\pgfpathlineto{\pgfpoint{\mcSize+\mcThickness}{\mcSize+\mcThickness}}
\pgfusepath{stroke}
}}
\makeatother

 
\tikzset{
pattern size/.store in=\mcSize, 
pattern size = 5pt,
pattern thickness/.store in=\mcThickness, 
pattern thickness = 0.3pt,
pattern radius/.store in=\mcRadius, 
pattern radius = 1pt}
\makeatletter
\pgfutil@ifundefined{pgf@pattern@name@_wcj7owzr7}{
\pgfdeclarepatternformonly[\mcThickness,\mcSize]{_wcj7owzr7}
{\pgfqpoint{0pt}{0pt}}
{\pgfpoint{\mcSize+\mcThickness}{\mcSize+\mcThickness}}
{\pgfpoint{\mcSize}{\mcSize}}
{
\pgfsetcolor{\tikz@pattern@color}
\pgfsetlinewidth{\mcThickness}
\pgfpathmoveto{\pgfqpoint{0pt}{0pt}}
\pgfpathlineto{\pgfpoint{\mcSize+\mcThickness}{\mcSize+\mcThickness}}
\pgfusepath{stroke}
}}
\makeatother

 
\tikzset{
pattern size/.store in=\mcSize, 
pattern size = 5pt,
pattern thickness/.store in=\mcThickness, 
pattern thickness = 0.3pt,
pattern radius/.store in=\mcRadius, 
pattern radius = 1pt}
\makeatletter
\pgfutil@ifundefined{pgf@pattern@name@_nmj0zh67s}{
\pgfdeclarepatternformonly[\mcThickness,\mcSize]{_nmj0zh67s}
{\pgfqpoint{0pt}{0pt}}
{\pgfpoint{\mcSize+\mcThickness}{\mcSize+\mcThickness}}
{\pgfpoint{\mcSize}{\mcSize}}
{
\pgfsetcolor{\tikz@pattern@color}
\pgfsetlinewidth{\mcThickness}
\pgfpathmoveto{\pgfqpoint{0pt}{0pt}}
\pgfpathlineto{\pgfpoint{\mcSize+\mcThickness}{\mcSize+\mcThickness}}
\pgfusepath{stroke}
}}
\makeatother

 
\tikzset{
pattern size/.store in=\mcSize, 
pattern size = 5pt,
pattern thickness/.store in=\mcThickness, 
pattern thickness = 0.3pt,
pattern radius/.store in=\mcRadius, 
pattern radius = 1pt}
\makeatletter
\pgfutil@ifundefined{pgf@pattern@name@_vdpemwr0m}{
\pgfdeclarepatternformonly[\mcThickness,\mcSize]{_vdpemwr0m}
{\pgfqpoint{0pt}{0pt}}
{\pgfpoint{\mcSize+\mcThickness}{\mcSize+\mcThickness}}
{\pgfpoint{\mcSize}{\mcSize}}
{
\pgfsetcolor{\tikz@pattern@color}
\pgfsetlinewidth{\mcThickness}
\pgfpathmoveto{\pgfqpoint{0pt}{0pt}}
\pgfpathlineto{\pgfpoint{\mcSize+\mcThickness}{\mcSize+\mcThickness}}
\pgfusepath{stroke}
}}
\makeatother

 
\tikzset{
pattern size/.store in=\mcSize, 
pattern size = 5pt,
pattern thickness/.store in=\mcThickness, 
pattern thickness = 0.3pt,
pattern radius/.store in=\mcRadius, 
pattern radius = 1pt}
\makeatletter
\pgfutil@ifundefined{pgf@pattern@name@_dr2197wcf}{
\pgfdeclarepatternformonly[\mcThickness,\mcSize]{_dr2197wcf}
{\pgfqpoint{0pt}{0pt}}
{\pgfpoint{\mcSize+\mcThickness}{\mcSize+\mcThickness}}
{\pgfpoint{\mcSize}{\mcSize}}
{
\pgfsetcolor{\tikz@pattern@color}
\pgfsetlinewidth{\mcThickness}
\pgfpathmoveto{\pgfqpoint{0pt}{0pt}}
\pgfpathlineto{\pgfpoint{\mcSize+\mcThickness}{\mcSize+\mcThickness}}
\pgfusepath{stroke}
}}
\makeatother

 
\tikzset{
pattern size/.store in=\mcSize, 
pattern size = 5pt,
pattern thickness/.store in=\mcThickness, 
pattern thickness = 0.3pt,
pattern radius/.store in=\mcRadius, 
pattern radius = 1pt}
\makeatletter
\pgfutil@ifundefined{pgf@pattern@name@_3qcqbesgc}{
\pgfdeclarepatternformonly[\mcThickness,\mcSize]{_3qcqbesgc}
{\pgfqpoint{0pt}{0pt}}
{\pgfpoint{\mcSize+\mcThickness}{\mcSize+\mcThickness}}
{\pgfpoint{\mcSize}{\mcSize}}
{
\pgfsetcolor{\tikz@pattern@color}
\pgfsetlinewidth{\mcThickness}
\pgfpathmoveto{\pgfqpoint{0pt}{0pt}}
\pgfpathlineto{\pgfpoint{\mcSize+\mcThickness}{\mcSize+\mcThickness}}
\pgfusepath{stroke}
}}
\makeatother

 
\tikzset{
pattern size/.store in=\mcSize, 
pattern size = 5pt,
pattern thickness/.store in=\mcThickness, 
pattern thickness = 0.3pt,
pattern radius/.store in=\mcRadius, 
pattern radius = 1pt}
\makeatletter
\pgfutil@ifundefined{pgf@pattern@name@_ms4rkwdd5}{
\pgfdeclarepatternformonly[\mcThickness,\mcSize]{_ms4rkwdd5}
{\pgfqpoint{0pt}{0pt}}
{\pgfpoint{\mcSize+\mcThickness}{\mcSize+\mcThickness}}
{\pgfpoint{\mcSize}{\mcSize}}
{
\pgfsetcolor{\tikz@pattern@color}
\pgfsetlinewidth{\mcThickness}
\pgfpathmoveto{\pgfqpoint{0pt}{0pt}}
\pgfpathlineto{\pgfpoint{\mcSize+\mcThickness}{\mcSize+\mcThickness}}
\pgfusepath{stroke}
}}
\makeatother

 
\tikzset{
pattern size/.store in=\mcSize, 
pattern size = 5pt,
pattern thickness/.store in=\mcThickness, 
pattern thickness = 0.3pt,
pattern radius/.store in=\mcRadius, 
pattern radius = 1pt}
\makeatletter
\pgfutil@ifundefined{pgf@pattern@name@_vtybccwm3}{
\pgfdeclarepatternformonly[\mcThickness,\mcSize]{_vtybccwm3}
{\pgfqpoint{0pt}{0pt}}
{\pgfpoint{\mcSize+\mcThickness}{\mcSize+\mcThickness}}
{\pgfpoint{\mcSize}{\mcSize}}
{
\pgfsetcolor{\tikz@pattern@color}
\pgfsetlinewidth{\mcThickness}
\pgfpathmoveto{\pgfqpoint{0pt}{0pt}}
\pgfpathlineto{\pgfpoint{\mcSize+\mcThickness}{\mcSize+\mcThickness}}
\pgfusepath{stroke}
}}
\makeatother

 
\tikzset{
pattern size/.store in=\mcSize, 
pattern size = 5pt,
pattern thickness/.store in=\mcThickness, 
pattern thickness = 0.3pt,
pattern radius/.store in=\mcRadius, 
pattern radius = 1pt}
\makeatletter
\pgfutil@ifundefined{pgf@pattern@name@_hos41ragy}{
\pgfdeclarepatternformonly[\mcThickness,\mcSize]{_hos41ragy}
{\pgfqpoint{0pt}{0pt}}
{\pgfpoint{\mcSize+\mcThickness}{\mcSize+\mcThickness}}
{\pgfpoint{\mcSize}{\mcSize}}
{
\pgfsetcolor{\tikz@pattern@color}
\pgfsetlinewidth{\mcThickness}
\pgfpathmoveto{\pgfqpoint{0pt}{0pt}}
\pgfpathlineto{\pgfpoint{\mcSize+\mcThickness}{\mcSize+\mcThickness}}
\pgfusepath{stroke}
}}
\makeatother

 
\tikzset{
pattern size/.store in=\mcSize, 
pattern size = 5pt,
pattern thickness/.store in=\mcThickness, 
pattern thickness = 0.3pt,
pattern radius/.store in=\mcRadius, 
pattern radius = 1pt}
\makeatletter
\pgfutil@ifundefined{pgf@pattern@name@_0wabbbft6}{
\pgfdeclarepatternformonly[\mcThickness,\mcSize]{_0wabbbft6}
{\pgfqpoint{0pt}{0pt}}
{\pgfpoint{\mcSize+\mcThickness}{\mcSize+\mcThickness}}
{\pgfpoint{\mcSize}{\mcSize}}
{
\pgfsetcolor{\tikz@pattern@color}
\pgfsetlinewidth{\mcThickness}
\pgfpathmoveto{\pgfqpoint{0pt}{0pt}}
\pgfpathlineto{\pgfpoint{\mcSize+\mcThickness}{\mcSize+\mcThickness}}
\pgfusepath{stroke}
}}
\makeatother

 
\tikzset{
pattern size/.store in=\mcSize, 
pattern size = 5pt,
pattern thickness/.store in=\mcThickness, 
pattern thickness = 0.3pt,
pattern radius/.store in=\mcRadius, 
pattern radius = 1pt}
\makeatletter
\pgfutil@ifundefined{pgf@pattern@name@_oqzcc7lxe}{
\pgfdeclarepatternformonly[\mcThickness,\mcSize]{_oqzcc7lxe}
{\pgfqpoint{0pt}{-\mcThickness}}
{\pgfpoint{\mcSize}{\mcSize}}
{\pgfpoint{\mcSize}{\mcSize}}
{
\pgfsetcolor{\tikz@pattern@color}
\pgfsetlinewidth{\mcThickness}
\pgfpathmoveto{\pgfqpoint{0pt}{\mcSize}}
\pgfpathlineto{\pgfpoint{\mcSize+\mcThickness}{-\mcThickness}}
\pgfusepath{stroke}
}}
\makeatother

 
\tikzset{
pattern size/.store in=\mcSize, 
pattern size = 5pt,
pattern thickness/.store in=\mcThickness, 
pattern thickness = 0.3pt,
pattern radius/.store in=\mcRadius, 
pattern radius = 1pt}
\makeatletter
\pgfutil@ifundefined{pgf@pattern@name@_sn8wa2re8}{
\pgfdeclarepatternformonly[\mcThickness,\mcSize]{_sn8wa2re8}
{\pgfqpoint{0pt}{-\mcThickness}}
{\pgfpoint{\mcSize}{\mcSize}}
{\pgfpoint{\mcSize}{\mcSize}}
{
\pgfsetcolor{\tikz@pattern@color}
\pgfsetlinewidth{\mcThickness}
\pgfpathmoveto{\pgfqpoint{0pt}{\mcSize}}
\pgfpathlineto{\pgfpoint{\mcSize+\mcThickness}{-\mcThickness}}
\pgfusepath{stroke}
}}
\makeatother

 
\tikzset{
pattern size/.store in=\mcSize, 
pattern size = 5pt,
pattern thickness/.store in=\mcThickness, 
pattern thickness = 0.3pt,
pattern radius/.store in=\mcRadius, 
pattern radius = 1pt}
\makeatletter
\pgfutil@ifundefined{pgf@pattern@name@_dt5bp7kd7}{
\pgfdeclarepatternformonly[\mcThickness,\mcSize]{_dt5bp7kd7}
{\pgfqpoint{0pt}{-\mcThickness}}
{\pgfpoint{\mcSize}{\mcSize}}
{\pgfpoint{\mcSize}{\mcSize}}
{
\pgfsetcolor{\tikz@pattern@color}
\pgfsetlinewidth{\mcThickness}
\pgfpathmoveto{\pgfqpoint{0pt}{\mcSize}}
\pgfpathlineto{\pgfpoint{\mcSize+\mcThickness}{-\mcThickness}}
\pgfusepath{stroke}
}}
\makeatother

 
\tikzset{
pattern size/.store in=\mcSize, 
pattern size = 5pt,
pattern thickness/.store in=\mcThickness, 
pattern thickness = 0.3pt,
pattern radius/.store in=\mcRadius, 
pattern radius = 1pt}
\makeatletter
\pgfutil@ifundefined{pgf@pattern@name@_m3ss30n5f}{
\pgfdeclarepatternformonly[\mcThickness,\mcSize]{_m3ss30n5f}
{\pgfqpoint{0pt}{-\mcThickness}}
{\pgfpoint{\mcSize}{\mcSize}}
{\pgfpoint{\mcSize}{\mcSize}}
{
\pgfsetcolor{\tikz@pattern@color}
\pgfsetlinewidth{\mcThickness}
\pgfpathmoveto{\pgfqpoint{0pt}{\mcSize}}
\pgfpathlineto{\pgfpoint{\mcSize+\mcThickness}{-\mcThickness}}
\pgfusepath{stroke}
}}
\makeatother

 
\tikzset{
pattern size/.store in=\mcSize, 
pattern size = 5pt,
pattern thickness/.store in=\mcThickness, 
pattern thickness = 0.3pt,
pattern radius/.store in=\mcRadius, 
pattern radius = 1pt}
\makeatletter
\pgfutil@ifundefined{pgf@pattern@name@_eeaq4pblw}{
\pgfdeclarepatternformonly[\mcThickness,\mcSize]{_eeaq4pblw}
{\pgfqpoint{0pt}{-\mcThickness}}
{\pgfpoint{\mcSize}{\mcSize}}
{\pgfpoint{\mcSize}{\mcSize}}
{
\pgfsetcolor{\tikz@pattern@color}
\pgfsetlinewidth{\mcThickness}
\pgfpathmoveto{\pgfqpoint{0pt}{\mcSize}}
\pgfpathlineto{\pgfpoint{\mcSize+\mcThickness}{-\mcThickness}}
\pgfusepath{stroke}
}}
\makeatother

 
\tikzset{
pattern size/.store in=\mcSize, 
pattern size = 5pt,
pattern thickness/.store in=\mcThickness, 
pattern thickness = 0.3pt,
pattern radius/.store in=\mcRadius, 
pattern radius = 1pt}
\makeatletter
\pgfutil@ifundefined{pgf@pattern@name@_buxsekv3w}{
\pgfdeclarepatternformonly[\mcThickness,\mcSize]{_buxsekv3w}
{\pgfqpoint{0pt}{-\mcThickness}}
{\pgfpoint{\mcSize}{\mcSize}}
{\pgfpoint{\mcSize}{\mcSize}}
{
\pgfsetcolor{\tikz@pattern@color}
\pgfsetlinewidth{\mcThickness}
\pgfpathmoveto{\pgfqpoint{0pt}{\mcSize}}
\pgfpathlineto{\pgfpoint{\mcSize+\mcThickness}{-\mcThickness}}
\pgfusepath{stroke}
}}
\makeatother

 
\tikzset{
pattern size/.store in=\mcSize, 
pattern size = 5pt,
pattern thickness/.store in=\mcThickness, 
pattern thickness = 0.3pt,
pattern radius/.store in=\mcRadius, 
pattern radius = 1pt}
\makeatletter
\pgfutil@ifundefined{pgf@pattern@name@_f6ixcbm2e}{
\pgfdeclarepatternformonly[\mcThickness,\mcSize]{_f6ixcbm2e}
{\pgfqpoint{0pt}{-\mcThickness}}
{\pgfpoint{\mcSize}{\mcSize}}
{\pgfpoint{\mcSize}{\mcSize}}
{
\pgfsetcolor{\tikz@pattern@color}
\pgfsetlinewidth{\mcThickness}
\pgfpathmoveto{\pgfqpoint{0pt}{\mcSize}}
\pgfpathlineto{\pgfpoint{\mcSize+\mcThickness}{-\mcThickness}}
\pgfusepath{stroke}
}}
\makeatother

 
\tikzset{
pattern size/.store in=\mcSize, 
pattern size = 5pt,
pattern thickness/.store in=\mcThickness, 
pattern thickness = 0.3pt,
pattern radius/.store in=\mcRadius, 
pattern radius = 1pt}
\makeatletter
\pgfutil@ifundefined{pgf@pattern@name@_eh6fw3syn}{
\pgfdeclarepatternformonly[\mcThickness,\mcSize]{_eh6fw3syn}
{\pgfqpoint{0pt}{-\mcThickness}}
{\pgfpoint{\mcSize}{\mcSize}}
{\pgfpoint{\mcSize}{\mcSize}}
{
\pgfsetcolor{\tikz@pattern@color}
\pgfsetlinewidth{\mcThickness}
\pgfpathmoveto{\pgfqpoint{0pt}{\mcSize}}
\pgfpathlineto{\pgfpoint{\mcSize+\mcThickness}{-\mcThickness}}
\pgfusepath{stroke}
}}
\makeatother

 
\tikzset{
pattern size/.store in=\mcSize, 
pattern size = 5pt,
pattern thickness/.store in=\mcThickness, 
pattern thickness = 0.3pt,
pattern radius/.store in=\mcRadius, 
pattern radius = 1pt}
\makeatletter
\pgfutil@ifundefined{pgf@pattern@name@_d8eo8rhz5}{
\pgfdeclarepatternformonly[\mcThickness,\mcSize]{_d8eo8rhz5}
{\pgfqpoint{0pt}{-\mcThickness}}
{\pgfpoint{\mcSize}{\mcSize}}
{\pgfpoint{\mcSize}{\mcSize}}
{
\pgfsetcolor{\tikz@pattern@color}
\pgfsetlinewidth{\mcThickness}
\pgfpathmoveto{\pgfqpoint{0pt}{\mcSize}}
\pgfpathlineto{\pgfpoint{\mcSize+\mcThickness}{-\mcThickness}}
\pgfusepath{stroke}
}}
\makeatother

 
\tikzset{
pattern size/.store in=\mcSize, 
pattern size = 5pt,
pattern thickness/.store in=\mcThickness, 
pattern thickness = 0.3pt,
pattern radius/.store in=\mcRadius, 
pattern radius = 1pt}
\makeatletter
\pgfutil@ifundefined{pgf@pattern@name@_dfnugp45i}{
\pgfdeclarepatternformonly[\mcThickness,\mcSize]{_dfnugp45i}
{\pgfqpoint{0pt}{0pt}}
{\pgfpoint{\mcSize+\mcThickness}{\mcSize+\mcThickness}}
{\pgfpoint{\mcSize}{\mcSize}}
{
\pgfsetcolor{\tikz@pattern@color}
\pgfsetlinewidth{\mcThickness}
\pgfpathmoveto{\pgfqpoint{0pt}{0pt}}
\pgfpathlineto{\pgfpoint{\mcSize+\mcThickness}{\mcSize+\mcThickness}}
\pgfusepath{stroke}
}}
\makeatother

 
\tikzset{
pattern size/.store in=\mcSize, 
pattern size = 5pt,
pattern thickness/.store in=\mcThickness, 
pattern thickness = 0.3pt,
pattern radius/.store in=\mcRadius, 
pattern radius = 1pt}
\makeatletter
\pgfutil@ifundefined{pgf@pattern@name@_e8s4tql46}{
\pgfdeclarepatternformonly[\mcThickness,\mcSize]{_e8s4tql46}
{\pgfqpoint{0pt}{0pt}}
{\pgfpoint{\mcSize+\mcThickness}{\mcSize+\mcThickness}}
{\pgfpoint{\mcSize}{\mcSize}}
{
\pgfsetcolor{\tikz@pattern@color}
\pgfsetlinewidth{\mcThickness}
\pgfpathmoveto{\pgfqpoint{0pt}{0pt}}
\pgfpathlineto{\pgfpoint{\mcSize+\mcThickness}{\mcSize+\mcThickness}}
\pgfusepath{stroke}
}}
\makeatother

 
\tikzset{
pattern size/.store in=\mcSize, 
pattern size = 5pt,
pattern thickness/.store in=\mcThickness, 
pattern thickness = 0.3pt,
pattern radius/.store in=\mcRadius, 
pattern radius = 1pt}
\makeatletter
\pgfutil@ifundefined{pgf@pattern@name@_la3bydh2h}{
\pgfdeclarepatternformonly[\mcThickness,\mcSize]{_la3bydh2h}
{\pgfqpoint{0pt}{0pt}}
{\pgfpoint{\mcSize+\mcThickness}{\mcSize+\mcThickness}}
{\pgfpoint{\mcSize}{\mcSize}}
{
\pgfsetcolor{\tikz@pattern@color}
\pgfsetlinewidth{\mcThickness}
\pgfpathmoveto{\pgfqpoint{0pt}{0pt}}
\pgfpathlineto{\pgfpoint{\mcSize+\mcThickness}{\mcSize+\mcThickness}}
\pgfusepath{stroke}
}}
\makeatother

 
\tikzset{
pattern size/.store in=\mcSize, 
pattern size = 5pt,
pattern thickness/.store in=\mcThickness, 
pattern thickness = 0.3pt,
pattern radius/.store in=\mcRadius, 
pattern radius = 1pt}
\makeatletter
\pgfutil@ifundefined{pgf@pattern@name@_wj3mg48y5}{
\pgfdeclarepatternformonly[\mcThickness,\mcSize]{_wj3mg48y5}
{\pgfqpoint{0pt}{0pt}}
{\pgfpoint{\mcSize+\mcThickness}{\mcSize+\mcThickness}}
{\pgfpoint{\mcSize}{\mcSize}}
{
\pgfsetcolor{\tikz@pattern@color}
\pgfsetlinewidth{\mcThickness}
\pgfpathmoveto{\pgfqpoint{0pt}{0pt}}
\pgfpathlineto{\pgfpoint{\mcSize+\mcThickness}{\mcSize+\mcThickness}}
\pgfusepath{stroke}
}}
\makeatother

 
\tikzset{
pattern size/.store in=\mcSize, 
pattern size = 5pt,
pattern thickness/.store in=\mcThickness, 
pattern thickness = 0.3pt,
pattern radius/.store in=\mcRadius, 
pattern radius = 1pt}
\makeatletter
\pgfutil@ifundefined{pgf@pattern@name@_ckpacz1d1}{
\pgfdeclarepatternformonly[\mcThickness,\mcSize]{_ckpacz1d1}
{\pgfqpoint{0pt}{0pt}}
{\pgfpoint{\mcSize+\mcThickness}{\mcSize+\mcThickness}}
{\pgfpoint{\mcSize}{\mcSize}}
{
\pgfsetcolor{\tikz@pattern@color}
\pgfsetlinewidth{\mcThickness}
\pgfpathmoveto{\pgfqpoint{0pt}{0pt}}
\pgfpathlineto{\pgfpoint{\mcSize+\mcThickness}{\mcSize+\mcThickness}}
\pgfusepath{stroke}
}}
\makeatother

 
\tikzset{
pattern size/.store in=\mcSize, 
pattern size = 5pt,
pattern thickness/.store in=\mcThickness, 
pattern thickness = 0.3pt,
pattern radius/.store in=\mcRadius, 
pattern radius = 1pt}
\makeatletter
\pgfutil@ifundefined{pgf@pattern@name@_w1w2jqn62}{
\pgfdeclarepatternformonly[\mcThickness,\mcSize]{_w1w2jqn62}
{\pgfqpoint{0pt}{0pt}}
{\pgfpoint{\mcSize+\mcThickness}{\mcSize+\mcThickness}}
{\pgfpoint{\mcSize}{\mcSize}}
{
\pgfsetcolor{\tikz@pattern@color}
\pgfsetlinewidth{\mcThickness}
\pgfpathmoveto{\pgfqpoint{0pt}{0pt}}
\pgfpathlineto{\pgfpoint{\mcSize+\mcThickness}{\mcSize+\mcThickness}}
\pgfusepath{stroke}
}}
\makeatother

 
\tikzset{
pattern size/.store in=\mcSize, 
pattern size = 5pt,
pattern thickness/.store in=\mcThickness, 
pattern thickness = 0.3pt,
pattern radius/.store in=\mcRadius, 
pattern radius = 1pt}
\makeatletter
\pgfutil@ifundefined{pgf@pattern@name@_crybwhpqx}{
\pgfdeclarepatternformonly[\mcThickness,\mcSize]{_crybwhpqx}
{\pgfqpoint{0pt}{0pt}}
{\pgfpoint{\mcSize+\mcThickness}{\mcSize+\mcThickness}}
{\pgfpoint{\mcSize}{\mcSize}}
{
\pgfsetcolor{\tikz@pattern@color}
\pgfsetlinewidth{\mcThickness}
\pgfpathmoveto{\pgfqpoint{0pt}{0pt}}
\pgfpathlineto{\pgfpoint{\mcSize+\mcThickness}{\mcSize+\mcThickness}}
\pgfusepath{stroke}
}}
\makeatother

 
\tikzset{
pattern size/.store in=\mcSize, 
pattern size = 5pt,
pattern thickness/.store in=\mcThickness, 
pattern thickness = 0.3pt,
pattern radius/.store in=\mcRadius, 
pattern radius = 1pt}
\makeatletter
\pgfutil@ifundefined{pgf@pattern@name@_g0t93lu20}{
\pgfdeclarepatternformonly[\mcThickness,\mcSize]{_g0t93lu20}
{\pgfqpoint{0pt}{0pt}}
{\pgfpoint{\mcSize+\mcThickness}{\mcSize+\mcThickness}}
{\pgfpoint{\mcSize}{\mcSize}}
{
\pgfsetcolor{\tikz@pattern@color}
\pgfsetlinewidth{\mcThickness}
\pgfpathmoveto{\pgfqpoint{0pt}{0pt}}
\pgfpathlineto{\pgfpoint{\mcSize+\mcThickness}{\mcSize+\mcThickness}}
\pgfusepath{stroke}
}}
\makeatother

 
\tikzset{
pattern size/.store in=\mcSize, 
pattern size = 5pt,
pattern thickness/.store in=\mcThickness, 
pattern thickness = 0.3pt,
pattern radius/.store in=\mcRadius, 
pattern radius = 1pt}
\makeatletter
\pgfutil@ifundefined{pgf@pattern@name@_vqxe0agie}{
\pgfdeclarepatternformonly[\mcThickness,\mcSize]{_vqxe0agie}
{\pgfqpoint{0pt}{0pt}}
{\pgfpoint{\mcSize+\mcThickness}{\mcSize+\mcThickness}}
{\pgfpoint{\mcSize}{\mcSize}}
{
\pgfsetcolor{\tikz@pattern@color}
\pgfsetlinewidth{\mcThickness}
\pgfpathmoveto{\pgfqpoint{0pt}{0pt}}
\pgfpathlineto{\pgfpoint{\mcSize+\mcThickness}{\mcSize+\mcThickness}}
\pgfusepath{stroke}
}}
\makeatother

 
\tikzset{
pattern size/.store in=\mcSize, 
pattern size = 5pt,
pattern thickness/.store in=\mcThickness, 
pattern thickness = 0.3pt,
pattern radius/.store in=\mcRadius, 
pattern radius = 1pt}
\makeatletter
\pgfutil@ifundefined{pgf@pattern@name@_8df04g6ll}{
\pgfdeclarepatternformonly[\mcThickness,\mcSize]{_8df04g6ll}
{\pgfqpoint{0pt}{0pt}}
{\pgfpoint{\mcSize+\mcThickness}{\mcSize+\mcThickness}}
{\pgfpoint{\mcSize}{\mcSize}}
{
\pgfsetcolor{\tikz@pattern@color}
\pgfsetlinewidth{\mcThickness}
\pgfpathmoveto{\pgfqpoint{0pt}{0pt}}
\pgfpathlineto{\pgfpoint{\mcSize+\mcThickness}{\mcSize+\mcThickness}}
\pgfusepath{stroke}
}}
\makeatother

 
\tikzset{
pattern size/.store in=\mcSize, 
pattern size = 5pt,
pattern thickness/.store in=\mcThickness, 
pattern thickness = 0.3pt,
pattern radius/.store in=\mcRadius, 
pattern radius = 1pt}
\makeatletter
\pgfutil@ifundefined{pgf@pattern@name@_m4cuigs7k}{
\pgfdeclarepatternformonly[\mcThickness,\mcSize]{_m4cuigs7k}
{\pgfqpoint{0pt}{0pt}}
{\pgfpoint{\mcSize+\mcThickness}{\mcSize+\mcThickness}}
{\pgfpoint{\mcSize}{\mcSize}}
{
\pgfsetcolor{\tikz@pattern@color}
\pgfsetlinewidth{\mcThickness}
\pgfpathmoveto{\pgfqpoint{0pt}{0pt}}
\pgfpathlineto{\pgfpoint{\mcSize+\mcThickness}{\mcSize+\mcThickness}}
\pgfusepath{stroke}
}}
\makeatother

 
\tikzset{
pattern size/.store in=\mcSize, 
pattern size = 5pt,
pattern thickness/.store in=\mcThickness, 
pattern thickness = 0.3pt,
pattern radius/.store in=\mcRadius, 
pattern radius = 1pt}
\makeatletter
\pgfutil@ifundefined{pgf@pattern@name@_x3fbefqtm}{
\pgfdeclarepatternformonly[\mcThickness,\mcSize]{_x3fbefqtm}
{\pgfqpoint{0pt}{0pt}}
{\pgfpoint{\mcSize+\mcThickness}{\mcSize+\mcThickness}}
{\pgfpoint{\mcSize}{\mcSize}}
{
\pgfsetcolor{\tikz@pattern@color}
\pgfsetlinewidth{\mcThickness}
\pgfpathmoveto{\pgfqpoint{0pt}{0pt}}
\pgfpathlineto{\pgfpoint{\mcSize+\mcThickness}{\mcSize+\mcThickness}}
\pgfusepath{stroke}
}}
\makeatother

 
\tikzset{
pattern size/.store in=\mcSize, 
pattern size = 5pt,
pattern thickness/.store in=\mcThickness, 
pattern thickness = 0.3pt,
pattern radius/.store in=\mcRadius, 
pattern radius = 1pt}
\makeatletter
\pgfutil@ifundefined{pgf@pattern@name@_44hh4ngjg}{
\pgfdeclarepatternformonly[\mcThickness,\mcSize]{_44hh4ngjg}
{\pgfqpoint{0pt}{0pt}}
{\pgfpoint{\mcSize+\mcThickness}{\mcSize+\mcThickness}}
{\pgfpoint{\mcSize}{\mcSize}}
{
\pgfsetcolor{\tikz@pattern@color}
\pgfsetlinewidth{\mcThickness}
\pgfpathmoveto{\pgfqpoint{0pt}{0pt}}
\pgfpathlineto{\pgfpoint{\mcSize+\mcThickness}{\mcSize+\mcThickness}}
\pgfusepath{stroke}
}}
\makeatother

 
\tikzset{
pattern size/.store in=\mcSize, 
pattern size = 5pt,
pattern thickness/.store in=\mcThickness, 
pattern thickness = 0.3pt,
pattern radius/.store in=\mcRadius, 
pattern radius = 1pt}
\makeatletter
\pgfutil@ifundefined{pgf@pattern@name@_ahar1zuc8}{
\pgfdeclarepatternformonly[\mcThickness,\mcSize]{_ahar1zuc8}
{\pgfqpoint{0pt}{0pt}}
{\pgfpoint{\mcSize+\mcThickness}{\mcSize+\mcThickness}}
{\pgfpoint{\mcSize}{\mcSize}}
{
\pgfsetcolor{\tikz@pattern@color}
\pgfsetlinewidth{\mcThickness}
\pgfpathmoveto{\pgfqpoint{0pt}{0pt}}
\pgfpathlineto{\pgfpoint{\mcSize+\mcThickness}{\mcSize+\mcThickness}}
\pgfusepath{stroke}
}}
\makeatother

 
\tikzset{
pattern size/.store in=\mcSize, 
pattern size = 5pt,
pattern thickness/.store in=\mcThickness, 
pattern thickness = 0.3pt,
pattern radius/.store in=\mcRadius, 
pattern radius = 1pt}
\makeatletter
\pgfutil@ifundefined{pgf@pattern@name@_7twxray5d}{
\pgfdeclarepatternformonly[\mcThickness,\mcSize]{_7twxray5d}
{\pgfqpoint{0pt}{0pt}}
{\pgfpoint{\mcSize+\mcThickness}{\mcSize+\mcThickness}}
{\pgfpoint{\mcSize}{\mcSize}}
{
\pgfsetcolor{\tikz@pattern@color}
\pgfsetlinewidth{\mcThickness}
\pgfpathmoveto{\pgfqpoint{0pt}{0pt}}
\pgfpathlineto{\pgfpoint{\mcSize+\mcThickness}{\mcSize+\mcThickness}}
\pgfusepath{stroke}
}}
\makeatother

 
\tikzset{
pattern size/.store in=\mcSize, 
pattern size = 5pt,
pattern thickness/.store in=\mcThickness, 
pattern thickness = 0.3pt,
pattern radius/.store in=\mcRadius, 
pattern radius = 1pt}
\makeatletter
\pgfutil@ifundefined{pgf@pattern@name@_8gayv0svk}{
\pgfdeclarepatternformonly[\mcThickness,\mcSize]{_8gayv0svk}
{\pgfqpoint{0pt}{0pt}}
{\pgfpoint{\mcSize+\mcThickness}{\mcSize+\mcThickness}}
{\pgfpoint{\mcSize}{\mcSize}}
{
\pgfsetcolor{\tikz@pattern@color}
\pgfsetlinewidth{\mcThickness}
\pgfpathmoveto{\pgfqpoint{0pt}{0pt}}
\pgfpathlineto{\pgfpoint{\mcSize+\mcThickness}{\mcSize+\mcThickness}}
\pgfusepath{stroke}
}}
\makeatother

 
\tikzset{
pattern size/.store in=\mcSize, 
pattern size = 5pt,
pattern thickness/.store in=\mcThickness, 
pattern thickness = 0.3pt,
pattern radius/.store in=\mcRadius, 
pattern radius = 1pt}
\makeatletter
\pgfutil@ifundefined{pgf@pattern@name@_458lrj2a6}{
\pgfdeclarepatternformonly[\mcThickness,\mcSize]{_458lrj2a6}
{\pgfqpoint{0pt}{0pt}}
{\pgfpoint{\mcSize+\mcThickness}{\mcSize+\mcThickness}}
{\pgfpoint{\mcSize}{\mcSize}}
{
\pgfsetcolor{\tikz@pattern@color}
\pgfsetlinewidth{\mcThickness}
\pgfpathmoveto{\pgfqpoint{0pt}{0pt}}
\pgfpathlineto{\pgfpoint{\mcSize+\mcThickness}{\mcSize+\mcThickness}}
\pgfusepath{stroke}
}}
\makeatother

 
\tikzset{
pattern size/.store in=\mcSize, 
pattern size = 5pt,
pattern thickness/.store in=\mcThickness, 
pattern thickness = 0.3pt,
pattern radius/.store in=\mcRadius, 
pattern radius = 1pt}
\makeatletter
\pgfutil@ifundefined{pgf@pattern@name@_45gu4mu2d}{
\pgfdeclarepatternformonly[\mcThickness,\mcSize]{_45gu4mu2d}
{\pgfqpoint{0pt}{0pt}}
{\pgfpoint{\mcSize+\mcThickness}{\mcSize+\mcThickness}}
{\pgfpoint{\mcSize}{\mcSize}}
{
\pgfsetcolor{\tikz@pattern@color}
\pgfsetlinewidth{\mcThickness}
\pgfpathmoveto{\pgfqpoint{0pt}{0pt}}
\pgfpathlineto{\pgfpoint{\mcSize+\mcThickness}{\mcSize+\mcThickness}}
\pgfusepath{stroke}
}}
\makeatother

 
\tikzset{
pattern size/.store in=\mcSize, 
pattern size = 5pt,
pattern thickness/.store in=\mcThickness, 
pattern thickness = 0.3pt,
pattern radius/.store in=\mcRadius, 
pattern radius = 1pt}
\makeatletter
\pgfutil@ifundefined{pgf@pattern@name@_30pjx5nzl}{
\pgfdeclarepatternformonly[\mcThickness,\mcSize]{_30pjx5nzl}
{\pgfqpoint{0pt}{0pt}}
{\pgfpoint{\mcSize+\mcThickness}{\mcSize+\mcThickness}}
{\pgfpoint{\mcSize}{\mcSize}}
{
\pgfsetcolor{\tikz@pattern@color}
\pgfsetlinewidth{\mcThickness}
\pgfpathmoveto{\pgfqpoint{0pt}{0pt}}
\pgfpathlineto{\pgfpoint{\mcSize+\mcThickness}{\mcSize+\mcThickness}}
\pgfusepath{stroke}
}}
\makeatother
\tikzset{every picture/.style={line width=0.75pt}} 
\scalebox{0.55}{
\begin{tikzpicture}[x=0.75pt,y=0.75pt,yscale=-1,xscale=1]

\draw    (20,20) -- (20,160) ;
\draw    (20,160) -- (80,160) ;
\draw    (80,100) -- (80,160) ;
\draw    (100,80) -- (100,100) ;
\draw    (140,40) -- (140,80) ;
\draw    (120,20) -- (120,40) ;
\draw    (20,20) -- (120,20) ;
\draw    (80,100) -- (100,100) ;
\draw    (100,80) -- (140,80) ;
\draw    (140,40) -- (120,40) ;
\draw    (180,20) -- (180,160) ;
\draw    (180,160) -- (240,160) ;
\draw    (240,100) -- (240,160) ;
\draw    (260,80) -- (260,100) ;
\draw    (300,40) -- (300,80) ;
\draw    (180,20) -- (280,20) ;
\draw    (240,100) -- (260,100) ;
\draw    (260,80) -- (300,80) ;
\draw    (280,40) -- (300,40) ;
\draw    (180,120) -- (200,120) ;
\draw    (280,20) -- (280,40) ;
\draw    (280,40) -- (280,60) ;
\draw    (260,60) -- (260,80) ;
\draw    (220,100) -- (240,100) ;
\draw    (220,100) -- (220,120) ;
\draw    (200,120) -- (220,120) ;
\draw    (260,60) -- (280,60) ;
\draw    (560,120) -- (560,160) ;
\draw    (560,160) -- (620,160) ;
\draw    (620,100) -- (620,160) ;
\draw    (640,80) -- (640,100) ;
\draw    (680,40) -- (680,80) ;
\draw    (620,20) -- (660,20) ;
\draw    (620,100) -- (640,100) ;
\draw    (640,80) -- (680,80) ;
\draw    (660,40) -- (680,40) ;
\draw    (620,20) -- (620,60) ;
\draw    (600,60) -- (620,60) ;
\draw    (600,60) -- (600,80) ;
\draw    (580,80) -- (600,80) ;
\draw    (580,80) -- (580,120) ;
\draw    (560,120) -- (580,120) ;
\draw    (660,20) -- (660,40) ;
\draw    (340,18) -- (340,158) ;
\draw    (340,158) -- (400,158) ;
\draw    (400,98) -- (400,158) ;
\draw    (420,78) -- (420,98) ;
\draw    (460,38) -- (460,78) ;
\draw    (340,18) -- (440,18) ;
\draw    (400,98) -- (420,98) ;
\draw    (420,78) -- (460,78) ;
\draw    (440,38) -- (460,38) ;
\draw    (400,18) -- (400,58) ;
\draw    (380,58) -- (400,58) ;
\draw    (380,58) -- (380,78) ;
\draw    (360,78) -- (380,78) ;
\draw    (360,78) -- (360,118) ;
\draw    (340,118) -- (360,118) ;
\draw    (440,18) -- (440,38) ;
\draw    (440,38) -- (440,58) ;
\draw    (420,58) -- (420,78) ;
\draw    (380,98) -- (400,98) ;
\draw    (380,98) -- (380,118) ;
\draw    (360,118) -- (380,118) ;
\draw    (420,58) -- (440,58) ;
\draw    (560,20) -- (560,160) ;
\draw    (560,20) -- (660,20) ;
\draw    (721,120) -- (721,160) ;
\draw    (721,160) -- (781,160) ;
\draw    (781,100) -- (781,160) ;
\draw    (801,80) -- (801,100) ;
\draw    (841,40) -- (841,80) ;
\draw    (781,20) -- (821,20) ;
\draw    (781,100) -- (801,100) ;
\draw    (801,80) -- (841,80) ;
\draw    (821,40) -- (841,40) ;
\draw    (781,20) -- (781,60) ;
\draw    (761,60) -- (781,60) ;
\draw    (761,60) -- (761,80) ;
\draw    (741,80) -- (761,80) ;
\draw    (741,80) -- (741,120) ;
\draw    (721,120) -- (741,120) ;
\draw    (821,20) -- (821,40) ;
\draw    (821,40) -- (821,60) ;
\draw    (801,60) -- (801,80) ;
\draw    (761,100) -- (781,100) ;
\draw    (761,100) -- (761,120) ;
\draw    (741,120) -- (761,120) ;
\draw    (801,60) -- (821,60) ;
\draw  [draw opacity=0][pattern=_a6umk1f39,pattern size=4.5pt,pattern thickness=0.75pt,pattern radius=0pt, pattern color={rgb, 255:red, 0; green, 0; blue, 0}] (801,60) -- (821,60) -- (821,80) -- (801,80) -- cycle ;
\draw  [draw opacity=0][pattern=_c6rl6fams,pattern size=4.5pt,pattern thickness=0.75pt,pattern radius=0pt, pattern color={rgb, 255:red, 0; green, 0; blue, 0}] (180,120) -- (200,120) -- (200,140) -- (180,140) -- cycle ;
\draw  [draw opacity=0][pattern=_m3vqgrjl0,pattern size=4.5pt,pattern thickness=0.75pt,pattern radius=0pt, pattern color={rgb, 255:red, 0; green, 0; blue, 0}] (200,120) -- (220,120) -- (220,140) -- (200,140) -- cycle ;
\draw  [draw opacity=0][pattern=_pgcud395u,pattern size=4.5pt,pattern thickness=0.75pt,pattern radius=0pt, pattern color={rgb, 255:red, 0; green, 0; blue, 0}] (220,100) -- (240,100) -- (240,120) -- (220,120) -- cycle ;
\draw  [draw opacity=0][pattern=_3xknzeb79,pattern size=4.5pt,pattern thickness=0.75pt,pattern radius=0pt, pattern color={rgb, 255:red, 0; green, 0; blue, 0}] (180,140) -- (200,140) -- (200,160) -- (180,160) -- cycle ;
\draw  [draw opacity=0][pattern=_ya59ch2pd,pattern size=4.5pt,pattern thickness=0.75pt,pattern radius=0pt, pattern color={rgb, 255:red, 0; green, 0; blue, 0}] (200,140) -- (220,140) -- (220,160) -- (200,160) -- cycle ;
\draw  [draw opacity=0][pattern=_6a2p26zyg,pattern size=4.5pt,pattern thickness=0.75pt,pattern radius=0pt, pattern color={rgb, 255:red, 0; green, 0; blue, 0}] (220,140) -- (240,140) -- (240,160) -- (220,160) -- cycle ;
\draw  [draw opacity=0][pattern=_io2cnci7d,pattern size=4.5pt,pattern thickness=0.75pt,pattern radius=0pt, pattern color={rgb, 255:red, 0; green, 0; blue, 0}] (220,120) -- (240,120) -- (240,140) -- (220,140) -- cycle ;
\draw  [draw opacity=0][pattern=_u2a1anb7z,pattern size=4.5pt,pattern thickness=0.75pt,pattern radius=0pt, pattern color={rgb, 255:red, 0; green, 0; blue, 0}] (260,60) -- (280,60) -- (280,80) -- (260,80) -- cycle ;
\draw  [draw opacity=0][pattern=_xe7k92cue,pattern size=4.5pt,pattern thickness=0.75pt,pattern radius=0pt, pattern color={rgb, 255:red, 0; green, 0; blue, 0}] (280,60) -- (300,60) -- (300,80) -- (280,80) -- cycle ;
\draw  [draw opacity=0][pattern=_yfkron34f,pattern size=4.5pt,pattern thickness=0.75pt,pattern radius=0pt, pattern color={rgb, 255:red, 0; green, 0; blue, 0}] (280,40) -- (300,40) -- (300,60) -- (280,60) -- cycle ;
\draw  [draw opacity=0][pattern=_njhkjo5uj,pattern size=4.5pt,pattern thickness=0.75pt,pattern radius=0pt, pattern color={rgb, 255:red, 0; green, 0; blue, 0}] (340,118) -- (360,118) -- (360,138) -- (340,138) -- cycle ;
\draw  [draw opacity=0][pattern=_1p2zk7p6h,pattern size=4.5pt,pattern thickness=0.75pt,pattern radius=0pt, pattern color={rgb, 255:red, 0; green, 0; blue, 0}] (340,138) -- (360,138) -- (360,158) -- (340,158) -- cycle ;
\draw  [draw opacity=0][pattern=_upyub9hd4,pattern size=4.5pt,pattern thickness=0.75pt,pattern radius=0pt, pattern color={rgb, 255:red, 0; green, 0; blue, 0}] (360,138) -- (380,138) -- (380,158) -- (360,158) -- cycle ;
\draw  [draw opacity=0][pattern=_h7get6sa2,pattern size=4.5pt,pattern thickness=0.75pt,pattern radius=0pt, pattern color={rgb, 255:red, 0; green, 0; blue, 0}] (380,138) -- (400,138) -- (400,158) -- (380,158) -- cycle ;
\draw  [draw opacity=0][pattern=_c793v0goe,pattern size=4.5pt,pattern thickness=0.75pt,pattern radius=0pt, pattern color={rgb, 255:red, 0; green, 0; blue, 0}] (360,118) -- (380,118) -- (380,138) -- (360,138) -- cycle ;
\draw  [draw opacity=0][pattern=_dkmkhz45c,pattern size=4.5pt,pattern thickness=0.75pt,pattern radius=0pt, pattern color={rgb, 255:red, 0; green, 0; blue, 0}] (380,118) -- (400,118) -- (400,138) -- (380,138) -- cycle ;
\draw  [draw opacity=0][pattern=_uhj0vhts6,pattern size=4.5pt,pattern thickness=0.75pt,pattern radius=0pt, pattern color={rgb, 255:red, 0; green, 0; blue, 0}] (380,98) -- (400,98) -- (400,118) -- (380,118) -- cycle ;
\draw  [draw opacity=0][pattern=_6g29qa4yr,pattern size=4.5pt,pattern thickness=0.75pt,pattern radius=0pt, pattern color={rgb, 255:red, 0; green, 0; blue, 0}] (420,58) -- (440,58) -- (440,78) -- (420,78) -- cycle ;
\draw  [draw opacity=0][pattern=_ba3143j47,pattern size=4.5pt,pattern thickness=0.75pt,pattern radius=0pt, pattern color={rgb, 255:red, 0; green, 0; blue, 0}] (440,58) -- (460,58) -- (460,78) -- (440,78) -- cycle ;
\draw  [draw opacity=0][pattern=_0knsan5ho,pattern size=4.5pt,pattern thickness=0.75pt,pattern radius=0pt, pattern color={rgb, 255:red, 0; green, 0; blue, 0}] (440,38) -- (460,38) -- (460,58) -- (440,58) -- cycle ;
\draw  [draw opacity=0][pattern=_d2l82x0zk,pattern size=4.5pt,pattern thickness=0.75pt,pattern radius=0pt, pattern color={rgb, 255:red, 0; green, 0; blue, 0}] (380,58) -- (400,58) -- (400,78) -- (380,78) -- cycle ;
\draw  [draw opacity=0][pattern=_ylm12zkue,pattern size=4.5pt,pattern thickness=0.75pt,pattern radius=0pt, pattern color={rgb, 255:red, 0; green, 0; blue, 0}] (400,78) -- (420,78) -- (420,98) -- (400,98) -- cycle ;
\draw  [draw opacity=0][pattern=_wcj7owzr7,pattern size=4.5pt,pattern thickness=0.75pt,pattern radius=0pt, pattern color={rgb, 255:red, 0; green, 0; blue, 0}] (380,78) -- (400,78) -- (400,98) -- (380,98) -- cycle ;
\draw  [draw opacity=0][pattern=_nmj0zh67s,pattern size=4.5pt,pattern thickness=0.75pt,pattern radius=0pt, pattern color={rgb, 255:red, 0; green, 0; blue, 0}] (360,78) -- (380,78) -- (380,98) -- (360,98) -- cycle ;
\draw  [draw opacity=0][pattern=_vdpemwr0m,pattern size=4.5pt,pattern thickness=0.75pt,pattern radius=0pt, pattern color={rgb, 255:red, 0; green, 0; blue, 0}] (360,98) -- (380,98) -- (380,118) -- (360,118) -- cycle ;
\draw  [draw opacity=0][pattern=_dr2197wcf,pattern size=4.5pt,pattern thickness=0.75pt,pattern radius=0pt, pattern color={rgb, 255:red, 0; green, 0; blue, 0}] (580,80) -- (600,80) -- (600,100) -- (580,100) -- cycle ;
\draw  [draw opacity=0][pattern=_3qcqbesgc,pattern size=4.5pt,pattern thickness=0.75pt,pattern radius=0pt, pattern color={rgb, 255:red, 0; green, 0; blue, 0}] (420,18) -- (440,18) -- (440,38) -- (420,38) -- cycle ;
\draw  [draw opacity=0][pattern=_ms4rkwdd5,pattern size=4.5pt,pattern thickness=0.75pt,pattern radius=0pt, pattern color={rgb, 255:red, 0; green, 0; blue, 0}] (400,58) -- (420,58) -- (420,78) -- (400,78) -- cycle ;
\draw  [draw opacity=0][pattern=_vtybccwm3,pattern size=4.5pt,pattern thickness=0.75pt,pattern radius=0pt, pattern color={rgb, 255:red, 0; green, 0; blue, 0}] (400,38) -- (420,38) -- (420,58) -- (400,58) -- cycle ;
\draw  [draw opacity=0][pattern=_hos41ragy,pattern size=4.5pt,pattern thickness=0.75pt,pattern radius=0pt, pattern color={rgb, 255:red, 0; green, 0; blue, 0}] (420,38) -- (440,38) -- (440,58) -- (420,58) -- cycle ;
\draw  [draw opacity=0][pattern=_0wabbbft6,pattern size=4.5pt,pattern thickness=0.75pt,pattern radius=0pt, pattern color={rgb, 255:red, 0; green, 0; blue, 0}] (400,18) -- (420,18) -- (420,38) -- (400,38) -- cycle ;
\draw  [draw opacity=0][pattern=_oqzcc7lxe,pattern size=4.5pt,pattern thickness=0.75pt,pattern radius=0pt, pattern color={rgb, 255:red, 0; green, 0; blue, 0}] (721,120) -- (741,120) -- (741,140) -- (721,140) -- cycle ;
\draw  [draw opacity=0][pattern=_sn8wa2re8,pattern size=4.5pt,pattern thickness=0.75pt,pattern radius=0pt, pattern color={rgb, 255:red, 0; green, 0; blue, 0}] (761,120) -- (781,120) -- (781,140) -- (761,140) -- cycle ;
\draw  [draw opacity=0][pattern=_dt5bp7kd7,pattern size=4.5pt,pattern thickness=0.75pt,pattern radius=0pt, pattern color={rgb, 255:red, 0; green, 0; blue, 0}] (761,100) -- (781,100) -- (781,120) -- (761,120) -- cycle ;
\draw  [draw opacity=0][pattern=_m3ss30n5f,pattern size=4.5pt,pattern thickness=0.75pt,pattern radius=0pt, pattern color={rgb, 255:red, 0; green, 0; blue, 0}] (761,140) -- (781,140) -- (781,160) -- (761,160) -- cycle ;
\draw  [draw opacity=0][pattern=_eeaq4pblw,pattern size=4.5pt,pattern thickness=0.75pt,pattern radius=0pt, pattern color={rgb, 255:red, 0; green, 0; blue, 0}] (741,140) -- (761,140) -- (761,160) -- (741,160) -- cycle ;
\draw  [draw opacity=0][pattern=_buxsekv3w,pattern size=4.5pt,pattern thickness=0.75pt,pattern radius=0pt, pattern color={rgb, 255:red, 0; green, 0; blue, 0}] (741,120) -- (761,120) -- (761,140) -- (741,140) -- cycle ;
\draw  [draw opacity=0][pattern=_f6ixcbm2e,pattern size=4.5pt,pattern thickness=0.75pt,pattern radius=0pt, pattern color={rgb, 255:red, 0; green, 0; blue, 0}] (721,140) -- (741,140) -- (741,160) -- (721,160) -- cycle ;
\draw  [draw opacity=0][pattern=_eh6fw3syn,pattern size=4.5pt,pattern thickness=0.75pt,pattern radius=0pt, pattern color={rgb, 255:red, 0; green, 0; blue, 0}] (821,40) -- (841,40) -- (841,60) -- (821,60) -- cycle ;
\draw  [draw opacity=0][pattern=_d8eo8rhz5,pattern size=4.5pt,pattern thickness=0.75pt,pattern radius=0pt, pattern color={rgb, 255:red, 0; green, 0; blue, 0}] (821,60) -- (841,60) -- (841,80) -- (821,80) -- cycle ;
\draw  [draw opacity=0][pattern=_dfnugp45i,pattern size=4.5pt,pattern thickness=0.75pt,pattern radius=0pt, pattern color={rgb, 255:red, 0; green, 0; blue, 0}] (560,120) -- (580,120) -- (580,140) -- (560,140) -- cycle ;
\draw  [draw opacity=0][pattern=_e8s4tql46,pattern size=4.5pt,pattern thickness=0.75pt,pattern radius=0pt, pattern color={rgb, 255:red, 0; green, 0; blue, 0}] (600,100) -- (620,100) -- (620,120) -- (600,120) -- cycle ;
\draw  [draw opacity=0][pattern=_la3bydh2h,pattern size=4.5pt,pattern thickness=0.75pt,pattern radius=0pt, pattern color={rgb, 255:red, 0; green, 0; blue, 0}] (580,100) -- (600,100) -- (600,120) -- (580,120) -- cycle ;
\draw  [draw opacity=0][pattern=_wj3mg48y5,pattern size=4.5pt,pattern thickness=0.75pt,pattern radius=0pt, pattern color={rgb, 255:red, 0; green, 0; blue, 0}] (600,140) -- (620,140) -- (620,160) -- (600,160) -- cycle ;
\draw  [draw opacity=0][pattern=_ckpacz1d1,pattern size=4.5pt,pattern thickness=0.75pt,pattern radius=0pt, pattern color={rgb, 255:red, 0; green, 0; blue, 0}] (580,140) -- (600,140) -- (600,160) -- (580,160) -- cycle ;
\draw  [draw opacity=0][pattern=_w1w2jqn62,pattern size=4.5pt,pattern thickness=0.75pt,pattern radius=0pt, pattern color={rgb, 255:red, 0; green, 0; blue, 0}] (600,120) -- (620,120) -- (620,140) -- (600,140) -- cycle ;
\draw  [draw opacity=0][pattern=_crybwhpqx,pattern size=4.5pt,pattern thickness=0.75pt,pattern radius=0pt, pattern color={rgb, 255:red, 0; green, 0; blue, 0}] (580,120) -- (600,120) -- (600,140) -- (580,140) -- cycle ;
\draw  [draw opacity=0][pattern=_g0t93lu20,pattern size=4.5pt,pattern thickness=0.75pt,pattern radius=0pt, pattern color={rgb, 255:red, 0; green, 0; blue, 0}] (560,140) -- (580,140) -- (580,160) -- (560,160) -- cycle ;
\draw  [draw opacity=0][pattern=_vqxe0agie,pattern size=4.5pt,pattern thickness=0.75pt,pattern radius=0pt, pattern color={rgb, 255:red, 0; green, 0; blue, 0}] (620,60) -- (640,60) -- (640,80) -- (620,80) -- cycle ;
\draw  [draw opacity=0][pattern=_8df04g6ll,pattern size=4.5pt,pattern thickness=0.75pt,pattern radius=0pt, pattern color={rgb, 255:red, 0; green, 0; blue, 0}] (600,60) -- (620,60) -- (620,80) -- (600,80) -- cycle ;
\draw  [draw opacity=0][pattern=_m4cuigs7k,pattern size=4.5pt,pattern thickness=0.75pt,pattern radius=0pt, pattern color={rgb, 255:red, 0; green, 0; blue, 0}] (620,80) -- (640,80) -- (640,100) -- (620,100) -- cycle ;
\draw  [draw opacity=0][pattern=_x3fbefqtm,pattern size=4.5pt,pattern thickness=0.75pt,pattern radius=0pt, pattern color={rgb, 255:red, 0; green, 0; blue, 0}] (600,80) -- (620,80) -- (620,100) -- (600,100) -- cycle ;
\draw  [draw opacity=0][pattern=_44hh4ngjg,pattern size=4.5pt,pattern thickness=0.75pt,pattern radius=0pt, pattern color={rgb, 255:red, 0; green, 0; blue, 0}] (640,60) -- (660,60) -- (660,80) -- (640,80) -- cycle ;
\draw  [draw opacity=0][pattern=_ahar1zuc8,pattern size=4.5pt,pattern thickness=0.75pt,pattern radius=0pt, pattern color={rgb, 255:red, 0; green, 0; blue, 0}] (660,60) -- (680,60) -- (680,80) -- (660,80) -- cycle ;
\draw  [draw opacity=0][pattern=_7twxray5d,pattern size=4.5pt,pattern thickness=0.75pt,pattern radius=0pt, pattern color={rgb, 255:red, 0; green, 0; blue, 0}] (660,40) -- (680,40) -- (680,60) -- (660,60) -- cycle ;
\draw  [draw opacity=0][pattern=_8gayv0svk,pattern size=4.5pt,pattern thickness=0.75pt,pattern radius=0pt, pattern color={rgb, 255:red, 0; green, 0; blue, 0}] (640,40) -- (660,40) -- (660,60) -- (640,60) -- cycle ;
\draw  [draw opacity=0][pattern=_458lrj2a6,pattern size=4.5pt,pattern thickness=0.75pt,pattern radius=0pt, pattern color={rgb, 255:red, 0; green, 0; blue, 0}] (620,40) -- (640,40) -- (640,60) -- (620,60) -- cycle ;
\draw  [draw opacity=0][pattern=_45gu4mu2d,pattern size=4.5pt,pattern thickness=0.75pt,pattern radius=0pt, pattern color={rgb, 255:red, 0; green, 0; blue, 0}] (620,20) -- (640,20) -- (640,40) -- (620,40) -- cycle ;
\draw  [draw opacity=0][pattern=_30pjx5nzl,pattern size=4.5pt,pattern thickness=0.75pt,pattern radius=0pt, pattern color={rgb, 255:red, 0; green, 0; blue, 0}] (640,20) -- (660,20) -- (660,40) -- (640,40) -- cycle ;
\draw  [dash pattern={on 0.84pt off 2.51pt}]  (721,22) -- (721,120) ;
\draw  [dash pattern={on 0.84pt off 2.51pt}]  (721,20) -- (781,20) ;

\draw (21,172.4) node [anchor=north west][inner sep=0.75pt]    {$\sigma _{Y}( Q)$};
\draw (181,172.4) node [anchor=north west][inner sep=0.75pt]    {$\sigma _{Y}( Q) /_{R} \ \sigma _{Y}( Y)$};
\draw (341,172.4) node [anchor=north west][inner sep=0.75pt]    {$( \sigma _{Y}( Q) /_{R} \ \sigma _{Y}( Y)) /_{R} \ Z$};
\draw (561,172.4) node [anchor=north west][inner sep=0.75pt]    {$\sigma _{Y}( Q) /_{R} \sigma _{Y}( X)$};
\draw (721,172.4) node [anchor=north west][inner sep=0.75pt]    {$\sigma _{Y}( X) /_{R} \ \sigma _{Y}( Y) \ =\ Z$};

\end{tikzpicture}}
    \label{fig:enter-label}
\end{figure}
By this definition the pair $(X,Y)$ is uniquely defined by $(Y,Z)$. Observe that every weak sentence $ X \subseteq_R Q$ appears because, in the cases that $Y=\emptyset$, we have $\sigma_Y(X) = Z$ and $\sigma_Y = id$, so there is an $X = Z$ for all $Z \subseteq_R \sigma_Y(Q) = Q$. Further, for each $X$, there is a pair $(X,Y)$ for each $Y \subseteq_R X$.  
Given how we selected $X$, we now have $sort(Q /_R Y)/_R Z = \sigma_Y(Q) /_R \sigma_Y(X)$ which implies that $sort(sort(Q /_R Y)/_R Z) = sort(\sigma_Y(Q) /_R \sigma_Y(X)) = sort(Q /_R X)$. 
Further, $sort(Z) = sort(\sigma_Y(X) /_R \sigma_Y(Y)) = sort(X /_R Y)$.  
Thus, when we uniquely associate a pair $(Y,Z)$ with a pair $(X,Y)$ we also uniquely associate equivalent summands $h_{sort(sort(Q /_R Y) /_R Z)} \otimes h_{sort(Z)} \otimes h_{sort(Y)}$ and $h_{sort(Q/_R X)} \otimes h_{sort(X /_R Y)} \otimes h_{sort(Y)}$. 
Using this equality, we rewrite \eqref{sumline2} as 
       $$(\Delta \otimes id) \circ \Delta(h_Q) = \sum_{X \subseteq_R Q} \sum_{Y \subseteq_R X} h_{sort(Q /_R X)} \otimes h_{sort(X/_R Y)} \otimes h_{sort(Y)}.$$
This is equivalent to the sum in Equation \eqref{sumline}, so we have shown that $(PSym_A, \Delta, \epsilon)$ is coassociative. To verify the axioms for the counit, we check that the two expressions below are equivalent to $id(h_Q)= h_Q$ and to each other. 
Note that $\mathbb{Q} \otimes PSym_A \cong PSym_A \cong PSym_A \otimes \mathbb{Q}$ so the elements $1 \otimes h_Q$ and $h_Q \otimes 1$ are equivalent to $h_Q$.
    \begin{align*}
        (id \otimes \epsilon) \circ \Delta(h_Q) &= (id \otimes \epsilon)\left(\sum_{J \subseteq_R Q} h_{sort(Q/_R J)} \otimes h_{sort(J)}\right) = h_Q \otimes 1,\\
        (\epsilon \otimes id) \circ \Delta(h_Q) &= (id \otimes \epsilon)\left(\sum_{J \subseteq_R Q} h_{sort(Q/_R J)} \otimes h_{sort(J)}\right) = 1 \otimes h_Q.
    \end{align*}
    Thus, $(PSym_A, \Delta, \epsilon)$ is a coalgebra.
    
    To show that $PSym_A$ is a bialgebra, we must show that the equations in Definition \ref{bialgebra} are satisfied. 
    Here we use the technical notation for multiplication, $\mu: PSym_A \otimes PSym_A \rightarrow PSym_A$ where $\mu(h_P \otimes h_S) = h_{sort(P \cdot S)}$. Also, let $1_P$ denote the multiplicative identity in $PSym_A$ and $T$ denote the map $T(x \otimes y) = y \otimes x$. First, we check that  $\Delta(m(h_P \otimes h_S))= (m \otimes m)(id \otimes T \otimes id)(\Delta \otimes \Delta)(h_P \otimes h_S)$.
For the left-hand side, we have
    $$\Delta(m(h_P \otimes h_S)) = \Delta(h_{sort(P\cdot S)}) = \sum_{J \subseteq_R sort(P \cdot S)} h_{sort(sort(P\cdot S)/_RJ)} \otimes h_{sort(J)}.$$
    For the righthand side, we have
    $$(m \otimes m)(id \otimes T \otimes id)(\Delta \otimes \Delta)(h_P \otimes h_S) =  \cdots$$
\begin{align*}
    &= (m \otimes m)(id \otimes T \otimes id) \sum_{\substack{Y \subseteq_R P\\Z \subseteq_R S}} h_{sort(P /_R Y)} \otimes h_{sort(Y)} \otimes h_{sort(S /_R Z)} \otimes h_{sort(Z)}\\ &= (m \otimes m) \sum_{\substack{Y \subseteq_R P\\Z \subseteq_R S}} h_{sort(P /_R Y)} \otimes h_{sort(S /_R Z)} \otimes h_{sort(Y)} \otimes h_{sort(Z)}\\
    &= \sum_{\substack{Y \subseteq_R P \\ Z \subseteq_R S}} h_{sort(P/_R Y \cdot S/_R Z)} \otimes h_{sort(Y \cdot Z)} = \sum_{\substack{Y \subseteq_R P \\ Z \subseteq_R S}} h_{sort((P \cdot S)/_R  (Y \cdot Z))} \otimes h_{sort(Y \cdot Z)},
    \end{align*}
    where $Y \cdot Z = (y_1, \ldots, y_{\ell(P)}, z_1, \ldots, z_{\ell(S)})$ for $Y = (y_1, y_2, \ldots, y_{\ell(Y)})$ and $z = (z_1, z_2, \ldots, z_{\ell(Z)})$ and empty words are added to the ends of $Y$ and $Z$ so that they are of length $\ell(P)$ and $\ell(S)$ respectively. Let $\sigma$ be a permutation such that $sort(P \cdot S) = \sigma(P \cdot S)$ and note that $sort(\sigma(P \cdot S)/_R \sigma(Y \cdot Z)) = sort((P \cdot S) /_R (Y \cdot Z)$.  
    Rename $\sigma(Y \cdot Z)$ as $J$ and note that $sort(J) = sort(Y \cdot Z)$. Then we can rewrite our sum as
    $$(m \otimes m)(id \otimes T \otimes id)(\Delta \otimes \Delta)(h_P \otimes h_S) = \sum_{J \subseteq_R sort(P \cdot S)} h_{sort(sort(P\cdot S)/_RJ)} \otimes h_{sort(J)},$$
so the equation holds. Next observe that $m \circ (\epsilon \otimes \epsilon)(h_P \otimes h_S) = \epsilon \circ m (h_P \otimes h_S)$ because both sides of the equation equal $1$ if $h_P=h_S=1_P$, and $0$ otherwise. Third, we have for $k \in \mathbb{Q}$ that $(u \otimes u)\Delta(k) = \Delta(u(k)) $ because both sides of the equation equal $1_P \otimes 1_P$. Finally, we have $k = \epsilon \circ u(k) = \epsilon(k \cdot 1_P)= k \epsilon(1_P)= k$. The four properties above confirm that $PSym_A$ is a bialgebra. It is also easily seen to be graded by the size of \psents and connected, so $PSym_A$ is a Hopf algebra by Proposition \ref{bi_to_hopf}.
\end{proof}

\begin{ex} Multiplication and comultiplication on the $h$-basis works as follows,
    $$h_{(aba,c)} h_{(bb,a)} = h_{(aba,bb,a,c)},$$ \vspace{-6mm}
    \begin{multline*}
         \Delta(h_{(ab,bc)}) = h_{(ab,bc)} \otimes 1 + h_{(bc,a)} \otimes h_{(b)} + h_{(ab,b)} \otimes h_{(c)} +  h_{(a,b)} \otimes h_{(b,c)} + \\ h_{(bc)} \otimes h_{(ab)} + h_{(ab)} \otimes h_{(bc)} + h_{(b)} \otimes h_{(ab,c)} + h_{(a)} \otimes h_{(bc,b)} + 1 \otimes h_{(ab,bc)}.
    \end{multline*}
   
\end{ex}

\begin{prop}
    The antipode of $PSym_A$ is given by $$S(h_{P}) = \sum_{J \preceq P}(-1)^{\ell(J)}h_{sort(J)},$$ where the sum runs over sentences $J$ that refine the p-sentence $P$, extended linearly.
\end{prop}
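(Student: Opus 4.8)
The plan is to exploit the fact that $PSym_A$ is, by its very definition, the free commutative algebra $\mathbb{Q}[h_w : w]$ on the generators indexed by nonempty words. Because it is commutative, its antipode $S$ (which exists by the preceding theorem together with Proposition~\ref{bi_to_hopf}) is an algebra homomorphism, not merely an anti-homomorphism, so it is determined by its values on the generators $h_w$. I would therefore introduce the algebra homomorphism $\tilde S \colon PSym_A \to PSym_A$ defined on generators by $\tilde S(h_w) = \sum_{J \preceq (w)} (-1)^{\ell(J)} h_{sort(J)}$, where the refinements $J$ of the one-word sentence $(w)$ are exactly the factorizations of $w$ into consecutive nonempty words, and then show both that $\tilde S$ has the claimed closed form on every $h_P$ and that $\tilde S = S$.

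First I would establish the closed form. Since $\tilde S$ is a homomorphism and $h_P = h_{w_1}\cdots h_{w_k}$, we have $\tilde S(h_P) = \prod_{i=1}^k \tilde S(h_{w_i})$. Expanding this product ranges over all tuples $(J_1, \ldots, J_k)$ with $J_i \preceq (w_i)$; using the multiplication rule $h_{sort(J_1)}\cdots h_{sort(J_k)} = h_{sort(J_1 \cdots J_k)}$ and the additivity of $\ell$ under concatenation, each tuple contributes $(-1)^{\ell(J_1 \cdots J_k)} h_{sort(J_1 \cdots J_k)}$. The key combinatorial observation is that cutting a refinement $J$ of $P$ at the word boundaries of $w_1, \ldots, w_k$ gives a bijection between refinements of $P$ and such tuples, preserving both $sort$ and $\ell$; this yields $\tilde S(h_P) = \sum_{J \preceq P} (-1)^{\ell(J)} h_{sort(J)}$ exactly.

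It then remains to prove $\tilde S = S$. I would use that, because $PSym_A$ is commutative, the convolution $\mu \circ (\tilde S \otimes \mathrm{id}) \circ \Delta$ of the two algebra homomorphisms $\tilde S$ and $\mathrm{id}$ is again an algebra homomorphism, as is $\eta \circ \epsilon$; two algebra homomorphisms agreeing on the generators $h_w$ coincide. Evaluating on a generator and using $\Delta(h_w) = \sum_{w = uv} h_u \otimes h_v$ gives $\mu(\tilde S \otimes \mathrm{id})\Delta(h_w) = \sum_{w=uv}\sum_{J \preceq (u)} (-1)^{\ell(J)} h_{sort(J \cdot (v))}$. Grouping these terms by the ordered factorization $F$ of $w$ that they produce (namely $J$ itself when $v = \emptyset$, or $J \cdot (v)$ when $v \neq \emptyset$), one checks that every factorization $F = (p_1, \ldots, p_r)$ with $r \geq 1$ arises from exactly two triples, namely $(u,v,J) = (w, \emptyset, F)$ with sign $(-1)^r$ and $(u,v,J) = (p_1\cdots p_{r-1},\, p_r,\, (p_1,\ldots,p_{r-1}))$ with sign $(-1)^{r-1}$, whose contributions to $h_{sort(F)}$ cancel. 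Hence $\mu(\tilde S \otimes \mathrm{id})\Delta(h_w) = 0 = \eta\epsilon(h_w)$ for $w \neq \emptyset$, so $\tilde S$ is a left convolution inverse of $\mathrm{id}$; since $\mathrm{id}$ already has the two-sided convolution inverse $S$, uniqueness of inverses in the convolution monoid forces $\tilde S = S$.

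I expect the main obstacle to be this sign-cancellation step: one must group the double sum by the \emph{ordered} factorization $F$ rather than by the resulting basis element $h_{sort(F)}$, since several distinct factorizations of $w$ may share the same sort (for instance $(a, aa)$ and $(aa, a)$ when $w = aaa$). Performing the cancellation per factorization $F$, before collecting equal sorts, is what makes the argument go through cleanly. A secondary point needing care is verifying that the cutting map of the second paragraph is genuinely a $sort$- and $\ell$-preserving bijection, which works precisely because sorting discards the order of the words.
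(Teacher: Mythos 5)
Your proposal is correct, but it reaches the formula by a different route than the paper. The paper works constructively: it invokes the recursion $S(x) = -\sum_i S(y_i)z_i$ for graded connected Hopf algebras (Equation~\eqref{anti_comult}), proves the one-word case $S(h_w) = \sum_{K \preceq (w)} (-1)^{\ell(K)} h_{sort(K)}$ by induction on the length of $w$, and then extends to arbitrary $h_P$ exactly as you do, by expanding the product $S(h_{w_k})\cdots S(h_{w_1})$ and cutting refinements of $P$ at word boundaries. You instead posit the formula, package it as an algebra endomorphism $\tilde S$, and verify the antipode axiom directly: the per-factorization sign cancellation showing $\tilde S * \mathrm{id} = \eta\epsilon$ on generators is the same combinatorial content as the paper's inductive step, but organized as a one-shot verification rather than a recursion, and closed off by uniqueness of inverses in the convolution monoid. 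The trade-off is that your argument leans on two standard facts the paper never needs to state --- that the convolution of algebra homomorphisms into a \emph{commutative} algebra is again an algebra homomorphism (which is what licenses checking only on the generators $h_w$), and that a left convolution inverse of $\mathrm{id}$ must equal the two-sided inverse $S$ --- whereas the paper needs only the recursion, at the cost of having to guess-free derive the formula inductively. Your approach makes the role of commutativity of $PSym_A$ explicit and structural; the paper's is more self-contained and elementary. Both handle the same delicate point, which you correctly flag: the cancellation must be performed per ordered factorization of $w$, before terms with equal $sort$ are merged, since distinct factorizations such as $(a,aa)$ and $(aa,a)$ contribute to the same basis element.
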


\begin{proof}
   Using the recursive definition of antipode and the comultiplication $\Delta$ of Equation \ref{anti_comult} where $w= a_1 \cdots a_i$ is a word of length $i$, 
   $$S(h_w) = - \sum_{j=0}^{i-1} S(h_{(a_1 \cdots a_j)}) h_{(a_{j+1} \cdots a_i)}.$$ In the case that $i=1$, we have $S(a_1)=-a_1$ which agrees with our formula. Assume that the formula holds for words $w = a_1 \cdots a_i$ where $i=k$. Now let $w = a_1a_2 \cdots a_ka_{k+1}$ be a word. We have 
   \begin{align*}
    S(h_{a_1a_2 \cdots a_ka_{k+1}}) &= - \sum_{j=0}^{k} S(h_{(a_1 \cdots a_j)}) h_{(a_{j+1} \cdots a_{k+1})}  = -h_w - \sum_{j=1}^{k} \sum_{J \preceq (a_1 \cdots a_j)} (-1)^{\ell(J)}h_{sort(J)} h_{(a_{j+1} \cdots a_k)}\\
    &= \sum_{K \preceq w} (-1)^{\ell(K)}h_{sort(K)}.
   \end{align*}
   Then, because the antipode is an anti-endomorphism,
\begin{align*}
     S(h_{P}) &= S(h_{w_k}) \cdots S(h_{w_1}) = \sum_{I_k \preceq w_k}(-1)^{\ell(I_k)}h_{sort(I_k)} \cdots \sum_{I_1 \preceq w_1}(-1)^{\ell(I_1)}h_{sort(I_1)}\\
     &= \sum_{I_k \preceq w_k} \cdots \sum_{I_1 \preceq w_1} (-1)^{\ell(I_k) + \cdots + \ell(I_1)}h_{sort(I_k)} \cdots h_{sort(I_1)} = \sum_{J \preceq P} (-1)^{\ell(J)}h_{sort(J)}. \qedhere
\end{align*}   
\end{proof}

\begin{ex} For the \psent $(aba,c)$, we have 
$$S(h_{(aba,c)})= h_{(aba,c)} - h_{(ba,a,c)} - h_{(ab,a,c)} + h_{(a,a,b,c)}.$$
\end{ex}

We can now show that $PSym_A$ is the commutative image of $NSym_A$. We do so by defining a morphism $\chi$ called the \emph{colored forgetful map} that sends the (noncommutative) generators of $NSym_A$ to the (commutative) generators of $PSym_A$. 

\begin{thm} The algebra homomorphism $\chi: \NSym_A \rightarrow PSym_A$ defined by $H_w \rightarrow h_w$ is a surjective Hopf algebra morphism.
\end{thm}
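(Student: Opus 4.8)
The plan is to show that $\chi$ is a bialgebra morphism and then invoke Corollary \ref{hopf_morph}, which upgrades any bialgebra morphism between Hopf algebras to a Hopf morphism and so handles antipode-compatibility for free. Since $\NSym_A$ is freely generated by the noncommuting elements $\{H_w\}$, the assignment $H_w \mapsto h_w$ extends uniquely to an algebra homomorphism, so on the algebra side the only substantive check is that the induced map respects the stated products. Concretely, for $I = (w_1, \ldots, w_k)$ one has $\chi(H_I) = h_{w_1}\cdots h_{w_k} = h_{sort(I)}$, and multiplicativity reduces to the identity $sort(sort(I)\cdot sort(J)) = sort(I\cdot J)$, which holds because both sides sort the same multiset of words. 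Surjectivity is then immediate: every generator $h_w = \chi(H_w)$ lies in the image and the $h_w$ generate $PSym_A$; equivalently every basis element $h_P$ equals $\chi(H_P)$ since $sort(P)=P$ for a \psent $P$.

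The heart of the argument is verifying the two coalgebra-morphism conditions of \eqref{coalg_morph}. The counit condition is routine: $\epsilon_{PSym_A}(\chi(H_I)) = \epsilon_{PSym_A}(h_{sort(I)})$ equals $1$ exactly when $I = \emptyset$, matching $\epsilon_{\NSym_A}(H_I)$. For comultiplication I would expand both composites. Directly from the two comultiplication formulas, and using $sort(\tilde K) = sort(K)$ on flattenings,
$$\Delta_{PSym_A}(\chi(H_I)) = \sum_{J \subseteq_R sort(I)} h_{sort(sort(I)/_R J)} \otimes h_{sort(J)},$$
$$(\chi \otimes \chi)(\Delta_{\NSym_A}(H_I)) = \sum_{J \subseteq_R I} h_{sort(I/_R J)} \otimes h_{sort(J)},$$
so the task is to match these two sums term by term.

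The key step, and the main obstacle, is a summand-by-summand bijection between the right-contained weak sentences of $I$ and those of $sort(I)$. Writing $sort(I) = \sigma(I)$ for a sorting permutation $\sigma$, I would send $J \subseteq_R I$ to $\sigma(J) \subseteq_R sort(I)$, with inverse $J' \mapsto \sigma^{-1}(J')$. Two facts make this work: first, $\sigma$ commutes with right quotients, $\sigma(I)/_R \sigma(J) = \sigma(I/_R J)$, since applying $\sigma$ permutes the word-by-word factorizations $w_j = u_j v_j$; and second, $sort$ is invariant under permuting words, $sort(\sigma(K)) = sort(K)$. Together these give $sort(sort(I)/_R \sigma(J)) = sort(\sigma(I/_R J)) = sort(I/_R J)$ and $sort(\sigma(J)) = sort(J)$, so the summand indexed by $\sigma(J)$ in the first display coincides with the summand indexed by $J$ in the second. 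The care required is in confirming that $J \mapsto \sigma(J)$ genuinely maps weak sentences right-contained in $I$ bijectively onto those right-contained in $sort(I)$, and in tracking the empty words introduced by flattening; this is where the bookkeeping lives. Once both coalgebra conditions are established, $\chi$ is a bialgebra morphism, and Corollary \ref{hopf_morph} yields $\chi \circ S_{\NSym_A} = S_{PSym_A} \circ \chi$, completing the proof that $\chi$ is a surjective Hopf morphism.
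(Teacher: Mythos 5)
Your proposal is correct and takes essentially the same route as the paper: show $\chi$ is a surjective algebra morphism, verify the counit and comultiplication conditions of \eqref{coalg_morph}, and conclude via Corollary \ref{hopf_morph} that a bialgebra morphism between Hopf algebras automatically commutes with the antipodes. The only difference is one of detail: the paper asserts the key equality $\sum_{J \subseteq_R I} h_{sort(I/_R J)} \otimes h_{sort(J)} = \Delta(h_{sort(I)})$ in a single step, whereas you justify it explicitly with the sorting-permutation bijection $J \mapsto \sigma(J)$ between weak sentences right-contained in $I$ and those right-contained in $sort(I)$ (and you cite the intended corollary, where the paper's pointer to Proposition \ref{bi_to_hopf} is evidently a slip).
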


\begin{proof} It is simple to see that $\chi$ is a surjective algebra homomorphism by definition. For the purposes of this proof, let $\Delta_N$ denote the comultiplication on $NSym_A$ and $\Delta_P$ denote the comultiplicaiton on $PSym_A$. We see that $\chi$ is a coalgebra homomorphism by Equation \eqref{coalg_morph}, as 
\begin{align*}
    (\chi \otimes \chi) \Delta_N(H_I) &= (\chi \otimes \chi) (\sum_{J \subseteq_R I} H_{I/_R J} \otimes H_{J})\\
    &= \sum_{J \subseteq_R I} h_{sort(I/_R J)} \otimes h_{sort(J)} = \Delta_P(h_{sort(I)})= \Delta_P(\chi(H_I))
\end{align*}

Additionally, $\epsilon_{N}(\chi(H_P)) = \epsilon_{P}(H_P)$ because both are $1$ if $P = \emptyset$ and 0 otherwise. Thus, by Proposition \ref{bi_to_hopf}, the map $\chi$ is a Hopf algebra morphism.
\end{proof}

We have established that $PSym_A$ is a Hopf algebra and the commutative image of $NSym_A$, which is analogous to the relationship of $Sym$ and $NSym$.  To show that $PSym_A$ is a colored generalization of $Sym$, we define a map from $PSym_A$ to $Sym$ and show that it is a Hopf isomorphism if $A$ is a unary alphabet.

\begin{defn} Define the \emph{uncoloring map} $$\upsilon: PSym_A \rightarrow Sym \text{\quad by \quad}\upsilon(h_{P}) = h_{w\ell(P)}.$$ 
\end{defn}

\begin{prop} When $A$ is an alphabet of size one, $PSym_A$  and $Sym$ are isomorphic as Hopf algebras.        
\end{prop}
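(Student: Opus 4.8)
The plan is to show that the uncoloring map $\upsilon$ is a bijective bialgebra morphism and then invoke Corollary~\ref{hopf_morph} to promote this to a Hopf isomorphism. The crucial first observation is that when $A = \{a\}$ is unary, each non-empty word equals $a^n$ and is determined by its length, so the graded lexicographic order collapses to ordering words by length. Consequently a p-sentence $P = (w_1, \ldots, w_k)$ is precisely a weakly decreasing sequence of lengths, and $P \mapsto w\ell(P)$ is a bijection from $PSent_A$ onto the set of all partitions. Under this bijection the map $\upsilon(h_P) = h_{w\ell(P)}$ carries the basis $\{h_P\}_P$ of $PSym_A$ onto the basis $\{h_\lambda\}_\lambda$ of $Sym$, so $\upsilon$ is a degree-preserving linear isomorphism.

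Next I would check that $\upsilon$ respects multiplication. From $h_P h_Q = h_{sort(P \cdot Q)}$ we obtain $\upsilon(h_P h_Q) = h_{w\ell(sort(P \cdot Q))}$, and since for a unary alphabet sorting the words by length is the same as sorting their lengths, $w\ell(sort(P \cdot Q)) = sort(w\ell(P) \cdot w\ell(Q))$; comparing with Equation~\eqref{h_mult} this equals $\upsilon(h_P)\upsilon(h_Q)$. Compatibility with the unit and counit is immediate, as both structures send the empty p-sentence to $1$ and every element of positive degree to $0$.

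The main work is verifying that $\upsilon$ is a coalgebra morphism, that is $(\upsilon \otimes \upsilon)\circ \Delta = \Delta \circ \upsilon$. Here I would translate right-containment into the unary setting: writing $\lambda = w\ell(Q)$, a weak sentence $J \subseteq_R Q$ is determined by choosing in each component $i$ a suffix of length $0 \le \mu_i \le \lambda_i$, so that $w\ell(J) = (\mu_1, \ldots, \mu_k)$ and $w\ell(Q/_R J) = (\lambda_1 - \mu_1, \ldots, \lambda_k - \mu_k)$. Applying $\upsilon \otimes \upsilon$ to the comultiplication formula then yields $\sum_{0 \le \mu_i \le \lambda_i} h_{sort(\lambda - \mu)} \otimes h_{sort(\mu)}$. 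On the other side, since $\Delta$ is an algebra map on $Sym$, we have $\Delta(h_\lambda) = \prod_i \Delta(h_{\lambda_i}) = \prod_i \sum_{j_i=0}^{\lambda_i} h_{j_i} \otimes h_{\lambda_i - j_i}$ by Equation~\eqref{h_comult}, and expanding this product while regrouping each tensor factor into a complete homogeneous function indexed by the sorted partition produces the same sum after the reindexing $j_i = \lambda_i - \mu_i$. This is the step I expect to require the most care, since it is where the combinatorial right-quotient structure defining the coproduct of $PSym_A$ must be matched termwise against the binomial coproduct $\Delta(h_n) = \sum_k h_k \otimes h_{n-k}$ of $Sym$.

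Having established that $\upsilon$ is a bijective bialgebra morphism between two graded connected Hopf algebras, Corollary~\ref{hopf_morph} guarantees that it automatically commutes with the antipodes, so $\upsilon$ is an isomorphism of Hopf algebras.
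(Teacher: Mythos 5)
Your proposal is correct and follows essentially the same route as the paper: when $A$ is unary, $w\ell$ gives a bijection between p-sentences and partitions, $\upsilon$ identifies the product and coproduct formulas of the colored complete homogeneous basis with Equations \eqref{h_mult} and \eqref{h_comult}, and Corollary \ref{hopf_morph} upgrades the resulting bialgebra isomorphism to a Hopf isomorphism. The only difference is one of detail: the paper checks the coproduct compatibility on the generators $h_{n_a}$ (implicitly using that coproducts are algebra maps), whereas you carry out the termwise match of the right-containment coproduct of $h_P$ against $\Delta(h_\lambda)$ for arbitrary $P$ --- a fuller verification of the same step.
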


\begin{proof} We show that, in this case, $\upsilon: PSym_A \rightarrow Sym$ is a Hopf isomorphism. Let $A ={a}$ and note that $w\ell$ is now a bijection between partitions and p-sentences because there is only one \psent for each shape $\lambda$.  Let $\lambda_a$ denote the unique p-sentence of shape $\lambda$ and $n_a$ the unique word of length $n$ in the alphabet $A = \{a\}$. Then the formulas for multiplication and comultiplication of the colored complete homogeneous basis simplify to 
    $$h_{\lambda_a}h_{\mu_a} = h_{sort(\lambda_a \cdot \mu_a)},$$
$$h_{n_a} = \sum_{0 \leq i \leq n} h_{i_a} \otimes h_{(n-i)_a}. $$
These are exactly the formulas for the complete homogeneous basis of the symmetric functions from Equations \eqref{h_mult} and \eqref{h_comult} when $\lambda_a$ is replaced with $\lambda$ and so on. From here it is simple to see that $\upsilon$ is a bialgebra isomorphism and thus, by Corollary \ref{hopf_morph}, a Hopf isomorphism.
\end{proof}

One can verify in a similar manner that $\upsilon$ is a Hopf morphism for any $A$. The same is true for $\upsilon: QSym_A \rightarrow QSym$ and $\upsilon: \NSym_A \rightarrow \NSym$.

\section{The colored symmetric functions: $Sym_A$}\label{sec:syma}

We now introduce a second colored generalization of $Sym$ that is a subalgebra of $QSym_A$ and dual to $PSym_A$. As before, we say a monomial $x_{i_1, v_1} \cdots x_{i_j, v_j}$, where $i_1 < \cdots < i_j$, is associated with the sentence $I=(v_1, \cdots, v_j)$. If a monomial is associated with the sentence $I$, we now also associate it with the \psent $sort(I)$.

\begin{defn}
    Let $Sym_A$ denote the set of 
 \emph{colored symmetric functions} $f \in \mathbb{Q}[x_A]$ such that $$f(x_{A,1}, x_{A,2}, \ldots ) = f(x_{A, \sigma(1)}, x_{A, \sigma(2)}, \ldots).$$ In other words, if two monomials in $f$ are associated with the same p-sentence, then they must have the same coefficients.
\end{defn}
\begin{ex} The function  
$$f = x_{a,1}x_{bc,2} + x_{bc,1}x_{a,2} + x_{a,1}x_{bc,3} + x_{bc,1}x_{a,3} + \cdots + x_{a,5}x_{bc,7} + x_{bc,5}x_{a,7} + \cdots $$ is in $Sym_A$ because each term $x_{a,i}x_{bc,j}$ has the same coefficient as the term $x_{a, \sigma(i)}x_{bc, \sigma(j)}$ for any permutation $\sigma$ of $\mathbb{N}$. The following function is not in $Sym_A$:
$$g = x_{a,1}x_{bc,2} + 3x_{bc,1}x_{a,2} + \cdots$$
\end{ex}

 By definition, $Sym_A \subseteq QSym_A$, and in fact $Sym_A$ is a subspace of $QSym_A$.

 \begin{defn} For a \psent $P$, the \emph{colored monomial symmetric function} is defined $$m_P = \sum_{sort(I)=P} M_I,$$ where the sum runs over sentences $I$ such that $sort(I)=P$. Equivalently, if $\ell(P) = k$ and $I = (v_1, \ldots, v_k)$, $$m_P = \sum_{ \substack{ I \in Sent_A \\ sort(I)=P}} \sum_{i_1 < \cdots < i_k} x_{v_1, i_1} \cdots  x_{v_k, i_k} = \sum_{\substack{K \in WSent_A\\sort(K)=P}} x_K.$$ 
 \end{defn}

 \begin{ex} Consider the \psent $(ab,c,c)$. Then, 
 \begin{align*}
     m_{(ab,c,c)} &= M_{(ab,c,c)} + M_{(c,ab,c)} + M_{(c,c,ab)}\\
     &= \sum_{i_1 < i_2 < i_3} x_{ab,i_1}x_{c, i_2}x_{c, i_3} + x_{c, i_1}x_{ab,i_2}x_{c, i_3} + x_{c, i_1}x_{c, i_2}x_{ab,i_3}.
 \end{align*}
 \end{ex}

 \begin{prop}
     The set $\{m_P\}_{P \in PSent_A}$ is a basis of $Sym_A$.
 \end{prop}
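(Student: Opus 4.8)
The plan is to leverage the fact that $\{M_I\}_{I \in Sent_A}$ is already a basis of the ambient algebra $QSym_A$, together with the observation that the map $I \mapsto sort(I)$ partitions $Sent_A$ into blocks indexed by $PSent_A$. Since $m_P = \sum_{sort(I) = P} M_I$ is exactly the sum of those basis elements $M_I$ lying in the block of $P$, both linear independence and spanning should reduce to bookkeeping of which sentences (equivalently, which monomials) occur in a given element, and how the symmetry condition defining $Sym_A$ constrains their coefficients.

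First I would confirm that each $m_P$ genuinely lies in $Sym_A$. Using the reformulation $m_P = \sum_{sort(K) = P} x_K$ over weak sentences $K$, every monomial occurring in $m_P$ has coefficient $1$, and a monomial $x_K$ is associated with the p-sentence $sort(K)$. Hence the monomials associated with $P$ all have coefficient $1$ while those associated with any other p-sentence have coefficient $0$; in either case monomials sharing a p-sentence share a coefficient, so $m_P \in Sym_A$.

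For linear independence, suppose $\sum_P a_P m_P = 0$. Substituting the definition and regrouping gives $\sum_{I \in Sent_A} a_{sort(I)} M_I = 0$, and since the $M_I$ are linearly independent in $QSym_A$ we conclude $a_{sort(I)} = 0$ for every sentence $I$; taking $I = P$ yields $a_P = 0$ for all $P \in PSent_A$. For spanning, I would take $f \in Sym_A \subseteq QSym_A$ and expand $f = \sum_I c_I M_I$ in the monomial basis of $QSym_A$. The coefficient $c_I$ equals the common coefficient in $f$ of the monomials associated with $I$, hence with the p-sentence $sort(I)$. The defining condition of $Sym_A$ forces any two monomials with the same p-sentence to share a coefficient, so $c_I = c_{I'}$ whenever $sort(I) = sort(I')$. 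Writing $c_P$ for this common value, I regroup $f = \sum_P c_P \sum_{sort(I) = P} M_I = \sum_P c_P m_P$, so the $m_P$ span $Sym_A$.

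I expect the only delicate step to be the precise dictionary among monomials, sentences, and p-sentences: one must verify that the coefficient of $M_I$ in a quasisymmetric function is exactly the common coefficient of the monomials associated with $I$, and that the $Sym_A$ symmetry condition translates precisely into $sort$-invariance of these coefficients. Once this translation is made explicit, the remainder is formal.
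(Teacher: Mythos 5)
Your proof is correct and follows essentially the same route as the paper: both expand an element of $Sym_A$ in the basis $\{M_I\}_I$ of $QSym_A$, use the defining symmetry condition to see that coefficients depend only on $sort(I)$, and get independence from the fact that the blocks $\{I : sort(I)=P\}$ partition $Sent_A$. Your explicit check that each $m_P$ lies in $Sym_A$ and your phrasing of independence via the linear independence of the $M_I$ (rather than of individual monomials, as in the paper) are minor, harmless variations.
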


 \begin{proof} Consider a colored symmetric function $f = \sum_{K \in WSent_A} b_K x_{K}$ where $b_K$ are rational coefficients.  Since $f \in QSym_A$, we rewrite $f = \sum_{J \in Sent_A} b_J M_J$ because $b_K = b_{\tilde{K}}$ for any weak sentence $K$. By definition of $Sym_A$, if $P = sort(J)$ for a sentence $J$ and \psent $P$, then $b_J = b_P$. Thus, $f= \sum_{P \in PSent_A} b_P m_P$, so the colored monomial symmetric functions span $Sym_A$. Additionally, for any sentence $J$, the monomial $x_J$ only appears in the colored monomial symmetric function $m_{sort(\tilde{K})}$.  It follows that the colored monomial symmetric functions are linearly independent.
 \end{proof}

To show that $Sym_A$ is a Hopf subalgebra of $QSym_A$, it suffices to show that multiplication, comultiplication, and the antipode of $QSym_A$ restrict to $Sym_A$. For p-sentences $P,S, $ and $Q$, let the coefficient $r^{Q}_{P,S}$ denote the number of pairs of weak sentences $Y = (y_1, \ldots, y_m)$ and $Z = (z_1, \ldots, z_m)$ such that $sort(Y)=P$ and $sort(Z)=S$ and $Q = (y_1z_1, y_2z_2, \ldots, y_mz_m)$. Additionally, for \psents $P = (w_1, \ldots, w_{k})$ and $Q = (v_1, \ldots, v_j)$,  write $Q \sqsubseteq P$ if $\{v_1, \ldots, v_j\}$ is a submultiset of $\{w_1, \ldots, w_k \}$. Now, assuming $Q \sqsubseteq P$, define $$P \smallersetminus Q = (u_1, \ldots, u_{k-j}) \in PSent_A$$ such that $\{w_1, \ldots, w_k\} = \{ v_1, \ldots, v_j\} \sqcup \{u_1, \ldots, u_{k-j}\}$ where $\sqcup$ is the union of multisets. For example, $\{aaa, ab, ab\} \sqsubseteq \{aaa, ab, ab, ab, ca, ca \}$,  and $ \{aaa, ab, ab, ab, ca, ca\} \smallersetminus \{aaa, ab, ab\} = \{ ab, ca, ca \}$.

\begin{thm}\label{thm:syma_hopf} $Sym_A$ is a graded Hopf algebra with multiplication given by \[m_Pm_s = \sum_{Q \in PSent_A} r^{Q}_{P,S}m_Q,\] the natural unity map $u(k) = k \cdot 1$, comultiplication given by \[ \Delta(m_P) = \sum_{Q \sqsubseteq P} m_{Q} \otimes m_{P \smallersetminus Q},\] and the counit \[\epsilon(m_P) = \begin{cases}
    1 & \text{ if } P = \emptyset\\
    0 & \text{ otherwise.}
\end{cases}.\]  Moreover, $Sym_A$ is a Hopf subalgebra of $QSym_A$.
\end{thm}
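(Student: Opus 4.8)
The plan is to verify each displayed formula by computing inside $QSym_A$ and checking that the output lands back in the subspace $Sym_A$. Since $Sym_A \subseteq QSym_A$ already, closure under multiplication, comultiplication, and the antipode is exactly what promotes this containment to a Hopf subalgebra, so (as the excerpt notes) it suffices to show these three operations restrict. Throughout I would work from the definition $m_P = \sum_{sort(K)=P} x_K$, the sum running over weak sentences $K$.

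For multiplication I would argue at the level of monomials. Writing $m_P m_S = \sum x_Y x_Z$ over weak sentences with $sort(Y)=P$ and $sort(Z)=S$, and grouping the partially commutative variables by their common second index (concatenating with the factor from $m_P$ on the left), gives $x_Y x_Z = x_N$ where $N$ has $i$-th word $y_i z_i$. Collecting terms, the coefficient of a monomial $x_N$ is the number of position-wise factorizations $n_i = y_i z_i$ with $sort((y_i))=P$ and $sort((z_i))=S$. This count is unchanged under relabeling the indices, hence depends only on $sort(N)$; this simultaneously shows $m_P m_S \in Sym_A$ and, taking $N=Q$ written in sorted order, identifies the coefficient of $m_Q$ as exactly $r^{Q}_{P,S}$.

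For comultiplication I would expand $\Delta(m_P) = \sum_{sort(I)=P}\sum_{I = J\cdot K} M_J \otimes M_K$ using the coproduct of $QSym_A$. The assignment sending a sentence $I$ with $sort(I)=P$ together with a splitting point to the prefix/suffix pair $(J,K)$ is a bijection onto the set of all pairs of sentences whose words together form the multiset of words of $P$; sorting each coordinate and grouping by $Q = sort(J)$ collapses the double sum to $\sum_{Q \sqsubseteq P} m_Q \otimes m_{P \smallersetminus Q}$, which manifestly lies in $Sym_A \otimes Sym_A$. The counit formula follows immediately from $\epsilon(M_I)=\delta_{I,\emptyset}$, and the unit is $m_{\emptyset}=1$. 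With closure under $\mu$ and $\Delta$ established, $Sym_A$ is a subbialgebra of $QSym_A$; it is graded by $|P|$ and connected (its degree-zero part is $\mathbb{Q}\cdot m_{\emptyset}$), so Proposition \ref{bi_to_hopf} endows it with an antipode and makes it a Hopf algebra. To finish, I would invoke uniqueness of the antipode: the map $S_{Sym_A}$ just obtained satisfies the defining antipode identity in $QSym_A$ as well, because $\mu$, $\Delta$, $\eta$, and $\epsilon$ all agree on $Sym_A$, so the antipode $S^*$ of $QSym_A$ must restrict to $Sym_A$ and coincide with $S_{Sym_A}$.

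The main obstacle is the multiplication step: one must treat the partial commutativity and the left-to-right concatenation order carefully when combining $x_Y x_Z$, and the crucial observation—that the coefficient of $x_N$ depends only on $sort(N)$—is precisely what secures both closure in $Sym_A$ and the combinatorial identity with $r^{Q}_{P,S}$. By comparison the coproduct reduces to a transparent prefix/suffix bijection, and the restriction of the antipode is essentially automatic from uniqueness.
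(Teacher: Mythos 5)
Your proposal is correct, and its first two-thirds coincide with the paper's own proof: for the product, counting position-wise factorizations $n_i = y_i z_i$ of each monomial $x_N$ and observing that the count is invariant under relabeling positions is exactly the paper's argument (the paper phrases the relabeling via an explicit permutation $\sigma_K$ carrying $K$ to $Q$, but it is the same bijection), and your prefix/suffix bijection for the coproduct is precisely the paper's reorganization of the double sum $\sum_{sort(I)=P}\sum_{J\cdot K = I} M_J \otimes M_K$ into $\sum_{Q \sqsubseteq P} m_Q \otimes m_{P \smallersetminus Q}$. Where you genuinely diverge is the antipode. The paper works with the explicit formula $S^*(M_I) = (-1)^{\ell(I)}\sum_{J^r \succeq I} M_J$ and shows directly that $S^*(m_P) \in Sym_A$ by a combinatorial matching: for sentences $K, L$ with $sort(K)=sort(L)$, each occurrence of $M_K$ in $\sum_{sort(I)=P} S^*(M_I)$ is paired with a unique occurrence of $M_L$, so the coefficients agree. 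You instead get an antipode $S_{Sym_A}$ for free from gradedness and connectedness via Proposition \ref{bi_to_hopf}, and then argue that $S^*$ must restrict and agree with it. That argument is valid, and is the standard fact that a subbialgebra which is itself a Hopf algebra is automatically a Hopf subalgebra; to state it precisely, both $\iota \circ S_{Sym_A}$ and $S^* \circ \iota$ are two-sided convolution inverses of the inclusion $\iota$ in the convolution algebra $\mathrm{Hom}(Sym_A, QSym_A)$, hence equal, so your phrase ``uniqueness of the antipode'' should really be ``uniqueness of two-sided convolution inverses of $\iota$'' (the two maps being compared do not have the same target, so the literal uniqueness of the antipode of $QSym_A$ is not quite the statement needed). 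The trade-off between the two routes: yours is shorter, purely formal, and generalizes to any graded connected subbialgebra of a Hopf algebra; the paper's computation costs a bijective argument but yields concrete information about how $S^*$ acts on the colored monomial basis --- the signed sum over coarsenings of rearrangements --- which is the kind of explicit formula the paper exhibits for $PSym_A$ and would want available in $Sym_A$ as well.
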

\begin{proof}

First we show for p-sentences $P = (w_1, \ldots, w_k)$ and $S = (v_1, \ldots, v_j)$, that $$m_P m_S = \sum_{Q \in PSent_A} r^{Q}_{P,S} m_Q,$$ where, for $\ell(Q)=m$, the coefficient $r^{Q}_{P,S}$ is the number of pairs of weak sentences $Y = (y_1, \ldots, y_m)$ and $Z = (z_1, \ldots, z_m)$ such that $sort(Y)=P$ and $sort(Z)=S$ and $Q = (y_1z_1, y_2z_2, \ldots, y_mz_m)$, and multiplication is inherited from $QSym_A$.  

    Let $K$ be a weak sentence with $sort(K)=Q$ and $\ell(K) = \ell$. 
    Using the definition of multiplication on $QSym_A$, each time the term $x_K$ appears in the multiplication $m_P m_S$, it is as the multiplication of two monomials $x_I x_J$ where $I = (v_1, \ldots, v_{\ell})$ and $J=(u_1, \ldots, u_{\ell})$ are weak sentences such that $sort(I)=P$ and $sort(J)=S$ and $(v_1u_1, v_2u_2, \ldots, v_{\ell}u_{\ell}) = K$. 
    Let $\sigma_K \in S_{\ell}$ be a permutation such that $\sigma_K(K)=Q$, in other words $(v_{\sigma_K(1)}u_{\sigma_K(1)}, \ldots, v_{\sigma_K(\ell)}u_{\sigma_K(\ell)})=sort(K) \cdot (\emptyset^{\ell - m})=Q \cdot (\emptyset^{\ell - m})$. 
    For any given $(I,J)$ as described above, let $Y = (y_1, \ldots, y_{\ell})= \sigma_K(I)$ and $Z  = (z_1, \ldots, z_{\ell})= \sigma_K(J)$, and observe that we now have $Y, Z$ such that $sort(Y)=P$ and $sort(Z)=S$ and $Q = (y_1z_1, y_2z_2, \ldots, y_mz_m)$ with $(y_{m+1}z_{m+1}, \ldots, y_{\ell}z_{\ell}) = (\emptyset^{\ell - m})$. 
    In this way, each pair $(I,J)$ can be associated with a unique pair $(Y,Z)$ because the pair $(Y,Z)$ is defined by $(I,J)$ by the application of a fixed permutation.  We also obtain every pair $(Y,Z)$ such that $sort(Y)=P$ and $sort(Z)=S$ and $Q = (y_1z_1, y_2z_2, \ldots, y_mz_m)$ because we can do the same logic in reverse. Thus, we have shown that for any weak sentence $K$ such that $sort(K)=Q$, the monomial $x_K$ appears exactly $r^Q_{P,S}$ times in the multiplication of $m_Pm_S$. Now, our first claim follows from the definition of $m_Q$.

 Next, for $P = (w_1, \ldots, w_k)$, we show that $$\Delta(m_P) = \sum_{Q \sqsubseteq P} m_{Q} \otimes m_{P \smallersetminus Q},$$ where $\Delta$ is inherited from $QSym_A$.      
     Observe that 
     \[ \Delta(m_P) = \sum_{sort(I)=P} \Delta(M_I) =  \sum_{sort(I)=P} \sum_{J \cdot K = I} M_J \otimes M_K.\]
          As written, the latter equation sums over all unique rearrangements of $P$ and then splits those rearrangements into two parts. Instead, rewrite the sum to first isolate a part of $P$ and then sum over all unique rearrangements of that part and what remains.
          \begin{align*}
           \Delta(m_P) &= \sum_{Q \sqsubseteq P} \sum_{sort(J)=Q} \sum_{sort(K)=P \smallersetminus Q} M_{J} \otimes M_K = \sum_{Q \sqsubseteq P} \left( \sum_{sort(J)=Q}  M_{J} \right) \otimes \left( \sum_{sort(K)=P \smallersetminus Q} M_K \right)\\
           &= \sum_{Q \sqsubseteq P} m_{Q} \otimes m_{P \smallersetminus Q}.
     \end{align*}
This and the first claim verify that $Sym_A$ is a subcoalgebra and a subalgebra of $QSym_A$, respectively, with the given formulas for multiplication and comultiplication as inherited from $QSym_A$.

Last, we show that $S^*(m_P) \in Sym_A,$ where $S^*$ is the antipode of $QSym_A$.
Observe that
$$S^*(m_P) = \sum_{sort(I)=P} S^*(M_I) = \sum_{sort(I)=P} \sum_{J^r \succeq I} (-1)^{\ell(I)} M_J.$$ Let $K$ and $L$ be two sentences such that $sort(K)=sort(L)$. It suffices to show that $M_K$ and $M_L$ appear the same number of times in the sum above because this implies the sum is a colored symmetric function.  The function $M_K$ will appear once for each unique sentence $I = (w_1, \ldots, w_k)$ such that $sort(I)=P$ and $K^r \succeq I$. In this case, $K^r = (w_1 \cdots w_{i_1}, w_{i_1+1} \cdots w_{i_2}, \cdots)$ for some $1 \leq i_1 < i_2 < \cdots < i_{\ell(K)} \leq k$. For this appearance of $K$, we can see that $L$ appears once as the coarsening of the sentence $I$ obtained by rearranging $(w_1 \cdots w_{i_1}, w_{i_1+1} \cdots w_{i_2}, \cdots)^r$ to equal $L$, then splitting the sentence in between each adjacent $w_j$ and $w_{j+1}$.  This way, we can match each $M_K$ with a unique $M_L$.  We can do the reverse to match each $M_L$ with a unique $M_K$, so there must be the same number of both. This proves our final claim. We have now also shown that the antipode of $QSym_A$ restricts to $Sym_A$, therefore $Sym_A$ is a Hopf subalgebra of $QSym_A$ and so a Hopf algebra in its own right.  \end{proof}

\begin{ex} The multiplication $m_{(bc,a)}m_{(b)}$ expands as
$$m_{(bc,a)}m_{(b)} = m_{(bc,a,b)} + m_{(bcb,a)} + m_{(ab,bc)}.$$
The comultiplication $\Delta( m_{(aba,bb,ca)})$ yields 
\begin{multline*}
    \Delta(m_{(aba,bb,ca)}) = 1 \otimes m_{(aba,bb,ca)} + m_{(aba)} \otimes m_{(bb,ca)} + m_{(bb)} \otimes m_{(aba,ca)} + m_{(ca)} \otimes m_{(aba,bb)} + \\ + m_{(aba,bb)} \otimes m_{(ca)} + m_{(aba,ca)} \otimes m_{(bb)} + m_{(bb,ca)} \otimes m_{(aba)} + m_{(aba,bb,ca)} \otimes 1
\end{multline*}
\end{ex}

Now, we verify that $Sym_A$ is an appropriate analogue to the symmetric functions.

\begin{defn} Define the \emph{uncoloring map} $$\upsilon: Sym_A \rightarrow Sym \text{\quad by \quad}\upsilon(m_{P}) = m_{w\ell(P)}.$$
\end{defn}

 For any $A$, the map $\upsilon$ is a Hopf morphism. This can be verified using the definitions in Section \ref{hopf_alg_sec}. We are again interested in the special case of $|A|=1$ when we have an isomorphism. 

\begin{prop} When $A$ is an alphabet of size one, $Sym_A$ and $Sym$ are isomorphic as Hopf algebras.    
\end{prop}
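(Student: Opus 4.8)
The plan is to show that the uncoloring map $\upsilon\colon Sym_A \to Sym$ is a Hopf isomorphism in the case $|A| = 1$, mirroring the argument given above for $PSym_A$. Write $A = \{a\}$, so that for each $n \geq 0$ there is exactly one word of length $n$, namely $a^n$. Consequently a p-sentence (a sorted sentence) is determined entirely by its word lengths, and $w\ell$ restricts to a bijection between $PSent_A$ and the set of all partitions. Since $\upsilon(m_P) = m_{w\ell(P)}$, this shows at once that $\upsilon$ carries the monomial basis $\{m_P\}_{P \in PSent_A}$ bijectively onto the monomial basis $\{m_\lambda\}_\lambda$ of $Sym$, so $\upsilon$ is a linear isomorphism. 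It then remains only to check that $\upsilon$ intertwines the algebra and coalgebra structures.

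For multiplication, I would use the formula $m_P m_S = \sum_Q r^Q_{P,S} m_Q$ from Theorem \ref{thm:syma_hopf} and observe that when $A = \{a\}$ a weak sentence carries the same data as its sequence of word lengths. Writing $\lambda = w\ell(P)$, $\mu = w\ell(S)$, and $\nu = w\ell(Q)$, a pair of weak sentences $(Y,Z)$ with $sort(Y)=P$, $sort(Z)=S$, and $Q = (y_1z_1, \ldots, y_m z_m)$ corresponds precisely, via $\alpha_i = |y_i|$ and $\beta_i = |z_i|$, to a pair of nonnegative integer sequences $(\alpha, \beta)$ with $sort(\alpha)=\lambda$, $sort(\beta)=\mu$, and $\nu = (\alpha_1+\beta_1, \alpha_2+\beta_2, \ldots)$, since $|y_iz_i| = |y_i| + |z_i|$. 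Hence $r^Q_{P,S} = r^\nu_{\lambda,\mu}$, and comparison with Equation \eqref{mon_mult} gives $\upsilon(m_P m_S) = m_\lambda m_\mu = \upsilon(m_P)\upsilon(m_S)$.

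For comultiplication, I would apply $\upsilon \otimes \upsilon$ to the coproduct of Theorem \ref{thm:syma_hopf}. Under the identification of p-sentences with partitions, the submultiset condition $Q \sqsubseteq P$ is exactly a choice of a sub-multiset $\mu$ of the parts of $\lambda = w\ell(P)$, with $w\ell(P \smallersetminus Q)$ recording the complementary parts; this matches the splittings $\mu \sqcup \nu = \lambda$ indexing the coproduct \eqref{mon_comult} of $m_\lambda$. Thus $(\upsilon \otimes \upsilon)\Delta(m_P) = \sum_{\mu \sqcup \nu = \lambda} m_\mu \otimes m_\nu = \Delta(m_\lambda) = \Delta(\upsilon(m_P))$. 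Together with the evident compatibilities $\upsilon(1)=1$ and the fact that both counits detect the empty index (so $\upsilon$ intertwines them), this makes $\upsilon$ a bialgebra morphism, hence a bialgebra isomorphism. Finally, since $Sym_A$ and $Sym$ are both Hopf algebras, Corollary \ref{hopf_morph} guarantees that $\upsilon$ automatically commutes with the antipodes, so it is a Hopf isomorphism. I do not anticipate a genuine obstacle; the only step requiring care is the bookkeeping identifying $r^Q_{P,S}$ with $r^\nu_{\lambda,\mu}$, which rests entirely on the single-letter alphabet collapsing each word to its length.
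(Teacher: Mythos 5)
Your proposal is correct and follows essentially the same route as the paper: identify p-sentences with partitions via $w\ell$ when $|A|=1$, check that the structure constants $r^Q_{P,S}$ and the submultiset coproduct collapse to the classical formulas \eqref{mon_mult} and \eqref{mon_comult}, and invoke Corollary \ref{hopf_morph} to upgrade the bialgebra isomorphism to a Hopf isomorphism. The only difference is that you spell out the bookkeeping (words $\leftrightarrow$ lengths, $|y_iz_i|=|y_i|+|z_i|$) that the paper leaves as ``simple to see.''
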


\begin{proof} We show that, in this case, $\upsilon: Sym_A \rightarrow Sym$ is a Hopf isomorphism.
Let $A ={a}$ and note that $w\ell$ is now a bijection between partitions and \psents because there is only one \psent for each shape $\lambda$.  Let $\lambda_a$ denote the unique p-sentence of shape $\lambda$ and $n_a$ the unique word of length $n$ in the alphabet $A = \{a\}$. Then our formulas for multiplication and comultiplication of the colored monomial basis of $Sym_A$ simplify to 
    $$m_{\lambda_a}m_{\mu_a} = \sum_{\nu_a}r^{\nu_a}_{\lambda_a, \mu_a} m_{\nu_a} = \sum_{\nu_a:\ \nu \in \lambda \Qshuffle \mu} m_{\nu_a},$$
$$m_{\lambda_a} = \sum_{\mu_a \sqsubseteq \lambda_a} m_{\mu_a} \otimes m_{\lambda_a \smallersetminus \mu_a}. $$
These are exactly the formulas for the monomial basis of the symmetric functions from Equations \eqref{mon_mult} and \eqref{mon_comult} when $\lambda_a$ is replaced with $\lambda$ and so on, thus from here it is simple to see that $\upsilon$ is a bialgebra isomorphism and thus a Hopf isomorphism.
\end{proof}

We have now defined two colored generalizations of $Sym$, one specializing $NSym_A$ and one $QSym_A$. These two generalizations are, in fact, dual Hopf algebras. 

\begin{thm} The Hopf algebras $PSym_A$ and $Sym_A$ are dually paired by the inner product $$PSym_A \times Sym_A : \langle \cdot, \cdot \rangle \rightarrow \mathbb{Q} \text{ defined by } \langle h_P, m_S \rangle = \delta_{P,S}.$$
\end{thm}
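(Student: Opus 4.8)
The plan is to verify the five compatibility conditions of Definition \ref{hopf_dual_mult} with $\mathcal{B} = PSym_A$ (the first argument) and $\mathcal{A} = Sym_A$ (the second argument), extending $\langle h_P, m_S\rangle = \delta_{P,S}$ bilinearly. Because the pairing is nonzero only when $P$ and $S$ have the same size, it is a \emph{graded} pairing; each graded piece of both algebras is finite dimensional, so the pairing is well defined, and $\{h_P\}_P$, $\{m_S\}_S$ are dual bases by construction. The substance of the theorem is that multiplication in each algebra is adjoint to comultiplication in the other, so I would prove those two statements first and then dispatch the unit, counit, and antipode conditions.

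First I would check Condition (1), that multiplication in $PSym_A$ is dual to comultiplication in $Sym_A$, i.e. $\langle h_P h_Q, m_S\rangle = \langle h_P \otimes h_Q, \Delta(m_S)\rangle$. The left side equals $\langle h_{sort(P\cdot Q)}, m_S\rangle = \delta_{sort(P\cdot Q),\,S}$ by the product rule in $PSym_A$. For the right side, $\Delta(m_S) = \sum_{R \sqsubseteq S} m_R \otimes m_{S \smallersetminus R}$ from Theorem \ref{thm:syma_hopf}, so pairing gives $\sum_{R \sqsubseteq S} \delta_{P,R}\,\delta_{Q,\,S \smallersetminus R}$, which is $1$ precisely when the multiset of words of $S$ decomposes as the words of $P$ together with the words of $Q$, that is, when $sort(P\cdot Q) = S$. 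Both sides therefore agree. Conditions (2) and (4) are then immediate from the definitions of the counits: $\langle h_\emptyset, m_S\rangle = \delta_{\emptyset,S} = \epsilon_{Sym_A}(m_S)$ and $\epsilon_{PSym_A}(h_P) = \delta_{P,\emptyset} = \langle h_P, m_\emptyset\rangle$.

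The key step is Condition (3), that comultiplication in $PSym_A$ is dual to multiplication in $Sym_A$: $\langle \Delta(h_P), m_S \otimes m_T\rangle = \langle h_P, m_S m_T\rangle$. Expanding the left side via $\Delta(h_P) = \sum_{J \subseteq_R P} h_{sort(P/_R J)} \otimes h_{sort(J)}$ yields the number of right-containments $J \subseteq_R P$ with $sort(P/_R J) = S$ and $sort(J) = T$, while expanding the right side via $m_S m_T = \sum_Q r^{Q}_{S,T} m_Q$ yields $r^{P}_{S,T}$. To equate these I would exhibit the natural bijection: a right-containment $J \subseteq_R P$ is exactly a coordinatewise factorization $P_i = u_i v_i$ with $(u_i) = P/_R J$ and $(v_i) = J$, and setting $Y = (u_i)$, $Z = (v_i)$ converts the constraints $sort(P/_R J) = S$ and $sort(J) = T$ into precisely the constraints $sort(Y) = S$, $sort(Z) = T$, $P = (y_1 z_1, y_2 z_2, \ldots)$ defining $r^{P}_{S,T}$. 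Hence the two counts coincide. I expect this identification to be the main obstacle, since it requires carefully reconciling the containment-and-quotient bookkeeping of $PSym_A$ with the weak-sentence factorizations packaged into $r^{P}_{S,T}$ (though the underlying bijection is the same one already used to prove coassociativity and the product rules).

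Finally, for Condition (5), $\langle S(h_P), m_S\rangle = \langle h_P, S^\ast(m_S)\rangle$, I would avoid the explicit antipode formulas and instead argue that it is automatic. Conditions (1)--(4) say exactly that the map $h_P \mapsto \langle h_P, \cdot\,\rangle$ is a bialgebra morphism from $PSym_A$ into the graded dual of $Sym_A$; since both algebras are graded and connected this graded dual is a genuine Hopf algebra, so by Corollary \ref{hopf_morph} the morphism commutes with antipodes, which is precisely the identity $\langle S(h_P), m_S\rangle = \langle h_P, S^\ast(m_S)\rangle$. Equivalently, one notes that $(h_P, m_S) \mapsto \langle S(h_P), m_S\rangle$ and $(h_P, m_S) \mapsto \langle h_P, S^\ast(m_S)\rangle$ are both two-sided convolution inverses of the pairing and hence equal by uniqueness of inverses. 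This completes the verification that $PSym_A$ and $Sym_A$ are dually paired.
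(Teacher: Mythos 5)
Your proof is correct, but it takes a different route than the paper's. You verify the five axioms of Definition \ref{hopf_dual_mult} head-on: the two adjointness conditions by explicit computation --- with the key step being the bijection between right-containments $J \subseteq_R P$ and coordinatewise factorizations $P = (y_1z_1, \ldots, y_mz_m)$ --- the unit and counit conditions trivially, and the antipode condition by a general argument (conditions (1)--(4) make $h_P \mapsto \langle h_P, \cdot \rangle$ a bialgebra morphism from $PSym_A$ into the graded dual of $Sym_A$, which then commutes with antipodes by Corollary \ref{hopf_morph}). The paper instead forms the abstract graded dual $PSym_A^*$ with dual basis $\{h_S^*\}_S$, uses Proposition \ref{hopf_dual_coproduct} to transfer the multiplication and comultiplication of $\{h_P\}_P$ into structure constants for $\{h_S^*\}_S$, observes that the resulting formulas are exactly those of Theorem \ref{thm:syma_hopf} for the colored monomial basis, and concludes that $h_P^* \longleftrightarrow m_P$ is a bialgebra, hence Hopf, isomorphism. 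The combinatorial core is the same in both arguments: the paper's assertion that the dual multiplication constants are the numbers $r^P_{Q,S}$ tacitly uses exactly your containment-versus-factorization bijection, since the comultiplication of $h_P$ is indexed by right-containments while $r^P_{Q,S}$ counts factorizations. The difference is in how the work is distributed: the paper's route is shorter because the general duality propositions and the bialgebra-isomorphism-is-Hopf-isomorphism fact absorb most of the bookkeeping, while your direct verification is more self-contained and has the merit of making both the bijection and the antipode compatibility explicit rather than leaving them implicit in the general theory.
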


\begin{proof}
Let $PSym_A^*$ be the graded dual algebra to $PSym_A$ paired by the inner product $\langle \cdot , \cdot \rangle : PSym_A \times PSym_A^* \rightarrow \mathbb{Q}$ defined by $\langle h_P, h_S^* \rangle = \delta_{P,S}$ where $\{h^*_S\}_S$ is the basis dual to $\{h_P\}_P$. Then by Proposition \ref{hopf_dual_coproduct}, the multiplication for this basis is defined as

$$h^*_Qh^*_S = \sum_{P} r^P_{Q,S} h^*_P,$$
where $r^{P}_{Q,S}$ is the number of pairs of weak sentences $Y = (y_1, \ldots, y_m)$ and $Z = (z_1, \ldots, z_m)$ such that $sort(Y)=Q$, $sort(Z)=S$ and $P = (y_1z_1, y_2z_2, \ldots, y_mz_m)$. Comultiplication is expressed as
$$\Delta(h^*_S) = \sum_{P \sqsubseteq S} h^*_{P} \otimes h^*_{S \smallersetminus P}.$$

Notice that these formulas match those in Theorem \ref{thm:syma_hopf} for multiplication and comultiplication of the colored monomial basis of $Sym_A$. It is clear from this definition that there is a bijective bialgebra morphism, and thus a Hopf isomorphism by \cite[Corollary 1.4.27]{grinberg}, between $PSym^*_{A}$ and $Sym_A$ defined by $h^*_P \longleftrightarrow m_p$.
\end{proof}

\begin{prop} The map $\chi: \NSym_A \rightarrow PSym_A$ is adjoint to the inclusion $\iota: Sym_A \hookrightarrow QSym_A$ with respect to the duality pairing.
\end{prop}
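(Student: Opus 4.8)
The plan is to verify the defining equation of adjointness directly on basis elements and extend by bilinearity. First I would pin down what the adjointness relation asserts in this setting: although Equation \eqref{adjoint} is phrased for endomorphisms, here the two maps go between genuinely different spaces, so the content of the claim is that $\chi$ and $\iota$ are mutual transposes with respect to the two dual pairings. Concretely, writing $\langle \cdot, \cdot \rangle$ for the pairing $PSym_A \times Sym_A \to \mathbb{Q}$ given by $\langle h_P, m_S \rangle = \delta_{P,S}$ on the one side and for the pairing $\NSym_A \times QSym_A \to \mathbb{Q}$ given by $\langle H_I, M_J \rangle = \delta_{I,J}$ on the other, I must show
\[
\langle \chi(N), s \rangle = \langle N, \iota(s) \rangle
\]
for every $N \in \NSym_A$ and every $s \in Sym_A$.

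Since both sides are bilinear, it suffices to check the identity when $N = H_I$ for a sentence $I$ and $s = m_P$ for a p-sentence $P$. For the left-hand side, I would use that $\chi$ is the algebra morphism with $\chi(H_w) = h_w$, whence $\chi(H_I) = h_{sort(I)}$ (this follows because concatenation multiplication $H_I H_J = H_{I \cdot J}$ in $\NSym_A$ is carried to the sorted product $h_P h_Q = h_{sort(P \cdot Q)}$ in $PSym_A$). Then
\[
\langle \chi(H_I), m_P \rangle = \langle h_{sort(I)}, m_P \rangle = \delta_{sort(I), P}.
\]

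For the right-hand side, I would expand $\iota(m_P) = \sum_{sort(J)=P} M_J$ straight from the definition of the colored monomial symmetric function and compute
\[
\langle H_I, \iota(m_P) \rangle = \sum_{sort(J)=P} \langle H_I, M_J \rangle = \sum_{sort(J)=P} \delta_{I,J},
\]
which equals $1$ precisely when $I$ is itself one of the sentences indexing the sum, that is, when $sort(I) = P$, and equals $0$ otherwise. Hence the right-hand side is also $\delta_{sort(I),P}$, matching the left-hand side and establishing the claim.

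The only real subtlety, and the step I would present most carefully, is the bookkeeping that identifies $\chi(H_I)$ with $h_{sort(I)}$ and recognizes that the single Kronecker delta $\delta_{sort(I),P}$ on one side exactly accounts for the collapsed sum $\sum_{sort(J)=P}\delta_{I,J}$ on the other. There is no genuine analytic or combinatorial obstacle here; the result is essentially the statement that passing to commutative images via $\chi$ is transpose to summing a colored monomial function over its sorting fiber via $\iota$, which is precisely the colored analogue of the classical fact that the forgetful map $\chi \colon \NSym \to Sym$ is adjoint to the inclusion $Sym \hookrightarrow QSym$.
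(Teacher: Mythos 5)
Your proof is correct and follows essentially the same route as the paper: evaluate both pairings on the basis elements $H_I$ and $m_P$, use $\chi(H_I) = h_{\mathrm{sort}(I)}$ and $\iota(m_P) = \sum_{\mathrm{sort}(J)=P} M_J$, and observe that both sides reduce to $\delta_{\mathrm{sort}(I),P}$. The extra care you take in phrasing what adjointness means across two different pairings is a nice clarification, but the argument itself matches the paper's.
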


\begin{proof} For a \psent $P$ and a sentence $I$, 
    $$\langle \chi(H_{I}), m_P \rangle = \langle h_{sort(I)}, m_P \rangle = \begin{Bmatrix}
        1 & \text{if }sort(I)=P\\
        0 & \text{otherwise}
    \end{Bmatrix}  = \left\langle H_{I}, \sum_{sort(J)=P} M_J \right\rangle = \langle H_I, \iota(m_P) \rangle.$$ Thus by Equation \eqref{adjoint}  we have shown the maps are adjoint.
\end{proof}

\begin{rem}
As a commutative version of $NSym_A$, the new algebra $PSym_A$ is isomorphic to the Hopf algebra of decorated rooted tall trees (or rooted colored tall trees) contained in the decorated $CK$.  Likewise, $Sym_A$ as a Hopf subalgebra of $QSym_A$ can be formulated in terms of decorated rooted tall trees within decorated $NCK$. While decorated $CK$ and decorated $NCK$ are not dual, the subalgebras $PSym_A$ and $Sym_A$ are, and as such one could situate $Sym_A$ inside decorated $GL$, the graded dual of decorated $CK$, instead.
\end{rem}

Figure \ref{fig:alg_diag} depicts the relationships between $Sym, \NSym, QSym, PSym_A, Sym_A, \NSym_A,$ and $QSym_A$. Dual algebras are connected by dashed lines.

\begin{figure}[h!]
\centering
\tikzset{every picture/.style={line width=0.75pt}} 
\begin{tikzpicture}[x=0.75pt,y=0.75pt,yscale=-1,xscale=1]
\draw    (39.67,96) -- (88.02,129.53) ;
\draw [shift={(89.67,130.67)}, rotate = 214.73] [color={rgb, 255:red, 0; green, 0; blue, 0 }  ][line width=0.75]    (10.93,-3.29) .. controls (6.95,-1.4) and (3.31,-0.3) .. (0,0) .. controls (3.31,0.3) and (6.95,1.4) .. (10.93,3.29)   ;
\draw    (39.67,70.67) -- (88.1,32.24) ;
\draw [shift={(89.67,31)}, rotate = 141.57] [color={rgb, 255:red, 0; green, 0; blue, 0 }  ][line width=0.75]    (10.93,-3.29) .. controls (6.95,-1.4) and (3.31,-0.3) .. (0,0) .. controls (3.31,0.3) and (6.95,1.4) .. (10.93,3.29)   ;
\draw    (270,30) -- (221.56,68.75) ;
\draw [shift={(220,70)}, rotate = 321.34] [color={rgb, 255:red, 0; green, 0; blue, 0 }  ][line width=0.75]    (10.93,-3.29) .. controls (6.95,-1.4) and (3.31,-0.3) .. (0,0) .. controls (3.31,0.3) and (6.95,1.4) .. (10.93,3.29)   ;
\draw    (220.67,94) -- (268.38,128.82) ;
\draw [shift={(270,130)}, rotate = 216.12] [color={rgb, 255:red, 0; green, 0; blue, 0 }  ][line width=0.75]    (10.93,-3.29) .. controls (6.95,-1.4) and (3.31,-0.3) .. (0,0) .. controls (3.31,0.3) and (6.95,1.4) .. (10.93,3.29)   ;
\draw    (140.67,31) -- (190.08,68.79) ;
\draw [shift={(191.67,70)}, rotate = 217.41] [color={rgb, 255:red, 0; green, 0; blue, 0 }  ][line width=0.75]    (10.93,-3.29) .. controls (6.95,-1.4) and (3.31,-0.3) .. (0,0) .. controls (3.31,0.3) and (6.95,1.4) .. (10.93,3.29)   ;
\draw    (319.67,30) -- (368.1,68.75) ;
\draw [shift={(369.67,70)}, rotate = 218.66] [color={rgb, 255:red, 0; green, 0; blue, 0 }  ][line width=0.75]    (10.93,-3.29) .. controls (6.95,-1.4) and (3.31,-0.3) .. (0,0) .. controls (3.31,0.3) and (6.95,1.4) .. (10.93,3.29)   ;
\draw    (140.67,130) -- (188.07,94.21) ;
\draw [shift={(189.67,93)}, rotate = 142.94] [color={rgb, 255:red, 0; green, 0; blue, 0 }  ][line width=0.75]    (10.93,-3.29) .. controls (6.95,-1.4) and (3.31,-0.3) .. (0,0) .. controls (3.31,0.3) and (6.95,1.4) .. (10.93,3.29)   ;
\draw    (369.67,97) -- (322.66,128.88) ;
\draw [shift={(321,130)}, rotate = 325.86] [color={rgb, 255:red, 0; green, 0; blue, 0 }  ][line width=0.75]    (10.93,-3.29) .. controls (6.95,-1.4) and (3.31,-0.3) .. (0,0) .. controls (3.31,0.3) and (6.95,1.4) .. (10.93,3.29)   ;
\draw  [dash pattern={on 4.5pt off 4.5pt}]  (150,20) -- (270,20) ;
\draw  [dash pattern={on 4.5pt off 4.5pt}]  (147,143) -- (267,143) ;
\draw  [dash pattern={on 4.5pt off 4.5pt}]  (30,100) -- (30,170) ;
\draw  [dash pattern={on 4.5pt off 4.5pt}]  (30,170) -- (380.67,170) ;
\draw  [dash pattern={on 4.5pt off 4.5pt}]  (380,100) -- (380,170) ;
\draw (88.67,9.73) node [anchor=north west][inner sep=0.75pt]    {$PSym_{A}$};
\draw (11,72.4) node [anchor=north west][inner sep=0.75pt]    {$NSym_{A}$};
\draw (189.67,73.73) node [anchor=north west][inner sep=0.75pt]    {$ \begin{array}{l}
Sym\\
\end{array}$};
\draw (94,133.4) node [anchor=north west][inner sep=0.75pt]    {$NSym$};
\draw (272.67,133.07) node [anchor=north west][inner sep=0.75pt]    {$QSym$};
\draw (345,73.4) node [anchor=north west][inner sep=0.75pt]    {$QSym_{A}$};
\draw (272.67,9.73) node [anchor=north west][inner sep=0.75pt]    {$Sym_{A}$};
\draw (173,107.4) node [anchor=north west][inner sep=0.75pt]    {$\chi $};
\draw (47,32.4) node [anchor=north west][inner sep=0.75pt]    {$\chi $};
\draw (50,115.4) node [anchor=north west][inner sep=0.75pt]    {$\upsilon $};
\draw (172,32.4) node [anchor=north west][inner sep=0.75pt]    {$\upsilon $};
\draw (231,32.4) node [anchor=north west][inner sep=0.75pt]    {$\upsilon $};
\draw (350,114.4) node [anchor=north west][inner sep=0.75pt]    {$\upsilon $};
\draw (349,31.4) node [anchor=north west][inner sep=0.75pt]    {$i$};
\draw (225,110.4) node [anchor=north west][inner sep=0.75pt]    {$i$};
\end{tikzpicture}
    \caption{Relationships between algebras.}\label{algebras_diag}
    \label{fig:alg_diag}
\end{figure}
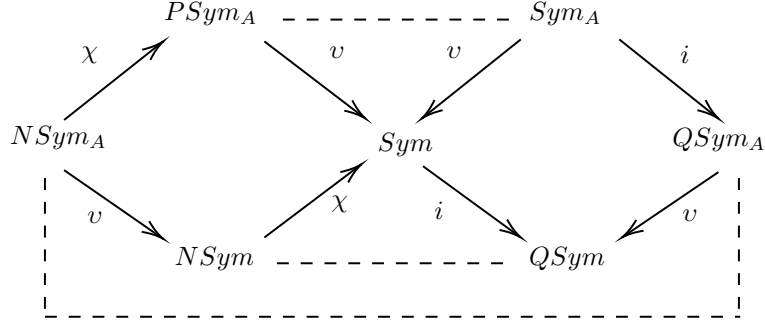

\begin{prop}
    The diagram in Figure \ref{fig:alg_diag} is commutative. 
\end{prop}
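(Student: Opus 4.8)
The figure's solid arrows are all algebra homomorphisms: the two copies of $\chi$ and the four copies of $\upsilon$ were shown above to be Hopf (in particular, algebra) morphisms, while the two copies of $i$ are inclusions of subalgebras. Every directed path in the diagram branches only at $\NSym_A$ and at $Sym_A$, so commutativity of the entire diagram is equivalent to commutativity of the two evident squares,
$$\begin{CD} \NSym_A @>\chi>> PSym_A \\ @V\upsilon VV @VV\upsilon V \\ \NSym @>\chi>> Sym \end{CD} \qquad\qquad \begin{CD} Sym_A @>i>> \QSym_A \\ @V\upsilon VV @VV\upsilon V \\ Sym @>i>> \QSym. \end{CD}$$
Once both squares commute, the longer paths (for instance the two routes $\NSym_A \to Sym \to \QSym$) automatically agree, so the plan is to treat each square on its own.

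In each square the two composites out of the source are themselves algebra homomorphisms, being composites of algebra maps, so it suffices to check that they agree on a generating set of the source. The advantage of this reduction is that on single-word generators the uncoloring maps act transparently, which sidesteps the bookkeeping of word-orderings that would otherwise clutter a comparison on arbitrary basis elements. For the left square, $\NSym_A$ is generated by $\{H_w\}$, and I would simply compute both ways: $\upsilon(\chi(H_w)) = \upsilon(h_w) = h_{|w|}$, while $\chi(\upsilon(H_w)) = \chi(H_{(|w|)}) = h_{sort((|w|))} = h_{|w|}$, so the two agree. For the right square I would first argue that $Sym_A$ is generated as an algebra by the single-word colored monomials $\{m_{(w)}\}$, and then verify that $\upsilon(i(m_{(w)})) = \upsilon(M_{(w)}) = M_{(|w|)}$ coincides with $i(\upsilon(m_{(w)})) = i(m_{(|w|)}) = M_{(|w|)}$, the last equality holding because the length-one composition $(|w|)$ has a unique rearrangement.

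The main obstacle is the generation statement for $Sym_A$, together with justifying that it is the correct reduction. I would prove it by induction on $\ell(P)$: for a p-sentence $P = (w_1, \ldots, w_k)$, expanding the product $m_{(w_1)} \cdots m_{(w_k)}$ through the quasishuffle multiplication of Theorem \ref{thm:syma_hopf} yields $m_P$ with a positive integer coefficient, arising from the length-$k$ shuffle terms in which no two words are merged, while every other term is indexed by a p-sentence of strictly smaller length (produced by concatenating some $w_i$ with a $w_j$). Solving for $m_P$ and invoking the inductive hypothesis places $m_P$ in the subalgebra generated by the $m_{(w)}$. This triangularity is precisely what allows the single-word check above to control all of $Sym_A$; it is also the reason some care is warranted in the equal-length, distinct-word case (such as $P = (ba, ab)$), where the correct multiplicities are dictated by compatibility with the algebra structure rather than being read off monomial by monomial, so routing the verification through generators is cleaner and more reliable than a direct basis-level argument.
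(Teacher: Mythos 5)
Your proposal is correct, but it takes a genuinely different route from the paper. The paper proves commutativity by brute force at the level of basis expansions: it takes a general $f = \sum_I b_I H_I \in \NSym_A$ and a general $g = \sum_P d_P m_P \in Sym_A$, pushes each along both routes of the relevant square, and observes that the resulting coefficient sums coincide — for the left square this comes down to the identity $w\ell(sort(I)) = sort(w\ell(I))$, and for the right square both routes produce $\sum_{\alpha} \bigl( \sum_{I :\, w\ell(I)=\alpha} d_{sort(I)} \bigr) M_{\alpha}$. You instead exploit the fact that every composite in sight is an algebra homomorphism and reduce to a check on algebra generators, which makes the actual verification one line per square ($H_w$ and $m_{(w)}$ respectively). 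The cost of your approach is the generation lemma for $Sym_A$: that every $m_P$ lies in the subalgebra generated by the single-word functions $m_{(w)}$. Your triangularity argument for this is sound — by the multiplication rule of Theorem \ref{thm:syma_hopf}, the product $m_{(w_1)} \cdots m_{(w_k)}$ equals $c\, m_P$ with $c > 0$ plus terms indexed by p-sentences of length strictly less than $k$ (merging words can only shorten), so induction on $\ell(P)$ closes the argument; for $\NSym_A$ no lemma is needed since it is free on $\{H_w\}$. On balance, the paper's computation is more self-contained and roughly the same amount of work, while your route isolates a structural fact of independent interest (a colored analogue of $Sym$ being generated by the one-part monomials $m_{(n)}$, i.e., the power sums) and makes the commutativity itself transparent rather than a coincidence of coefficient bookkeeping.
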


\begin{proof} To verify commutativity we must check that $\upsilon(\chi(f))= \chi( \upsilon(f))$ for $f\in \NSym_A$ and $i(\upsilon(g))=\upsilon(i(g))$ for $g \in Sym_A$. Consider some $f\in \NSym_A = \sum_{I \in Sent_A} b_{I} H_I$. Then, $$\chi(f) = \sum_{P \ in PSent_A} ( \sum_{I : sort(I)=P} b_I) h_P \in PSym_A \text{\quad and so \quad}\upsilon(\chi(f)) = \sum_{\lambda} (\sum_{I : w\ell(sort(I))=\lambda} b_I ) h_{\lambda}.$$ On the other hand, $$\upsilon(f) = \sum_{\alpha} (\sum_{I : w\ell(I) = \alpha} b_I) h_{\alpha} \text{\quad and so \quad }\chi(\upsilon(f)) = \sum_{\lambda} (\sum_{I : sort(w\ell(I))=\lambda} b_I) h_{\lambda}.$$ This verifies commutativity of the left portion of the figure.
Next, consider $g = \sum_{P \in PSent_A} d_{P} m_{P} \in Sym_A$. Observe that $$\upsilon(g) = \sum_{\lambda} (\sum_{P : w\ell(P) =\lambda} d_P) m_{\lambda}\text{\quad and so \quad} i(\upsilon(g)) = \sum_{\alpha} (\sum_{I : w\ell(I) = \alpha} d_{sort(I)}) M_{\alpha}.$$  On the other hand, $$i(g) = \sum_{I} d_{sort(I)} M_I \text{\quad and so \quad} \upsilon(i(g)) = 
\sum_{\alpha} (I :\sum_{w\ell(I)=\alpha} d_{sort(I)}) M_{\alpha}.$$ This verifies commutativity of the right side of the diagram.
\end{proof}

\section{The colored Schur and colored dual Schur functions}\label{sec:colorschur}

One generalization of the Schur basis to $PSym_A$ and $Sym_A$ is fairly immediate using colored versions of semistandard Young tableaux. For a sentence $J = (w_1, \ldots, w_k)$, the \textit{colored composition diagram} of shape $J$ is a composition diagram of $w\ell(J)$ where the $j^{\text{th}}$ box in row $i$ is colored with the $j^{\text{th}}$ color in $w_i$.

\begin{defn}
A \emph{colored semistandard Young tableau} of shape $P$ is a colored diagram of $P$ filled with positive integers such that each row is weakly increasing from left to right and each column is strictly increasing from top to bottom. 
\end{defn}

The \emph{type} of a CSSYT $T$ is a sentence $B=(u_1,\ldots ,u_j)$ that indicates how many boxes of each color are filled with each integer and in what order those boxes appear. That is, each word $u_i$ in $B$ is defined by starting in the lowest box containing an $i$ and reading the colors of all boxes containing $i$'s going from left to right, bottom to top. If no box is filled with the number $i$, then $u_i=\emptyset$. For a CIT $T$ of type $B=(u_1, \ldots, u_j)$, the monomial $x_T$ is defined $x_T = x_{u_1,1}x_{u_2,2} \cdots x_{u_j,j}$, which may also be denoted $x_B$.
 
The colored Kostka number, $\mathcal{K}_{P,J}$ denotes the number of colored semistandard Young tableaux of shape $P$ and type $J$.

\begin{defn} For a \psent $P$, the \emph{colored dual Schur function} is defined as $$s^*_{P} = \sum_{Q \in PSent_A} \mathcal{K}_{P,Q} m_Q.$$ 
\end{defn}

\begin{ex} The only CSSYT of shape $(abb,ca)$ whose types are \psents are given below, so we have the following colored dual Schur function.
\ytableausetup{boxsize=8mm}
 $$s^*_{(abb,ca)} = m_{(abb,ca)} + m_{(ab,cb,a)} + m_{(ab,ca,b)}$$
    $$\scalebox{.85}{
    \begin{ytableau}
        a, 1 & b, 1 & b, 1 \\
        c, 2 & a, 2
    \end{ytableau}
    \qquad \quad
    \begin{ytableau}
        a, 1 & b, 1 & b, 2 \\
        c, 2 & a, 3
    \end{ytableau}
    \qquad \quad 
    \begin{ytableau}
        a, 1 & b, 1 & b, 3 \\
        c, 2 & a, 2
    \end{ytableau}
    }$$\vspace{0mm}
    
In most cases, $s^*_P$ can not be expanded into a sum of monomials $x_T$ associated with all of the colored semistandard Young tableaux of shape $P$. For example, in the function $$s^*_{(a,ba)} = m_{(a,ba)} + m_{(a,b,a)} = x_{a,1}x_{ba,2} + x_{ba,1}x_{a,2} +  x_{a,1}x_{b,2}x_{a,3} + \cdots,$$ there is a term associated with the sentence $(ba,a)$ which cannot possibly be the type of a colored Young tableau of shape $(a,ba)$.
\end{ex}

\begin{prop}
    The colored dual Schur functions $\{ s^*_{P}\}_P$ form a basis of $Sym_A$.
\end{prop}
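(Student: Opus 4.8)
The plan is to leverage the already-established fact that $\{m_P\}_{P \in PSent_A}$ is a basis of $Sym_A$, and to show that the matrix of colored Kostka numbers $(\mathcal{K}_{P,Q})$ relating $\{s^*_P\}$ to $\{m_Q\}$ is invertible by realizing it as a unitriangular matrix for a suitable order. Since a CSSYT of shape $P$ has exactly $|P|$ boxes, $\mathcal{K}_{P,Q}=0$ unless $|Q|=|P|$, so the transition is graded by size; each graded piece is finite-dimensional, and it suffices to invert one finite matrix for each $n$. The order I would use is the \emph{dominance order} $\trianglelefteq$ on the underlying partitions $w\ell(P)$: for $\mu,\nu \vdash n$, write $\mu \trianglelefteq \nu$ when $\sum_{l\le i}\mu_l \le \sum_{l\le i}\nu_l$ for all $i$.

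The crux is the following triangularity lemma: $\mathcal{K}_{P,Q}=0$ unless $w\ell(Q)\trianglelefteq w\ell(P)$, and moreover $\mathcal{K}_{P,Q}=\delta_{P,Q}$ whenever $w\ell(Q)=w\ell(P)$ (in particular $\mathcal{K}_{P,P}=1$). To prove the dominance bound, I would fix a CSSYT $T$ of shape $P$ and type $Q=(u_1,\ldots)$ and, for each $i$, consider the set of cells of $T$ with entry at most $i$. Because rows weakly increase and columns strictly increase, this set is a prefix of every row and every column, hence a Young subdiagram of the shape $w\ell(P)$; and because the entry in row $r$ of any column is at least $r$ (columns strictly increase starting from a value $\ge 1$), every such cell lies in one of the first $i$ rows. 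A Young subdiagram of $w\ell(P)$ confined to the first $i$ rows has at most $\sum_{l\le i}w\ell(P)_l$ cells, whereas the number of cells with entry at most $i$ equals $\sum_{l\le i}|u_l|=\sum_{l\le i}w\ell(Q)_l$; comparing gives $w\ell(Q)\trianglelefteq w\ell(P)$.

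For the equality case $w\ell(Q)=w\ell(P)=:\lambda$, every dominance inequality becomes an equality, so the subdiagram of cells with entry at most $i$ attains the maximal size $\lambda_1+\cdots+\lambda_i$ among subdiagrams inside the first $i$ rows; the unique such subdiagram is the full first $i$ rows. Taking successive differences forces the cells with entry exactly $i$ to be precisely row $i$, so $T$ must be the diagonal filling (row $i$ entirely $i$'s). This filling is unique, and its type, read off one row at a time, is exactly $(w_1,\ldots,w_k)=P$; hence $Q=P$ and $\mathcal{K}_{P,P}=1$. With the lemma established, ordering the finitely many p-sentences of each size $n$ by any linear extension of dominance on $w\ell$ makes $(\mathcal{K}_{P,Q})$ unitriangular, hence invertible over $\mathbb{Q}$ (indeed over $\mathbb{Z}$), so $\{s^*_P\}_P$ is a basis. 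I expect the main obstacle to be the triangularity lemma, and in particular the care required to read the colored type correctly across several rows; what makes it go through cleanly is that equality of word-lengths removes all freedom and pins down the diagonal tableau, so the diagonal blocks of the transition matrix are identity blocks rather than merely triangular.
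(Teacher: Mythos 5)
Your proof is correct, and it takes a genuinely different and more self-contained route than the paper's. The paper also proceeds by triangularity of the transition matrix $(\mathcal{K}_{P,Q})$ against the colored monomial basis, but it orders p-sentences by reverse lexicographic order on the partitions $w\ell(P)$ (breaking ties by lexicographic order on words), notes that the diagonal entries are positive thanks to the filling of row $i$ by $i$'s, and obtains the vanishing below the diagonal by appeal to an external result: every CSSYT is a colored immaculate tableau, and the colored immaculate Kostka matrix was shown to be triangular in \cite[Theorem 4.21]{Dau23}. You instead prove triangularity from scratch with respect to dominance order on $w\ell$, via the classical argument that the cells with entry at most $i$ lie in the first $i$ rows; note that your key counting step needs only this containment plus cardinality, so the ``Young subdiagram'' observation is a harmless redundancy. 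Moreover, your equality-case analysis yields something strictly stronger than what the paper records: whenever $w\ell(P)=w\ell(Q)$ you get $\mathcal{K}_{P,Q}=\delta_{P,Q}$, i.e.\ the diagonal blocks are identity blocks, whereas the paper only asserts positive diagonal entries (which already suffices for invertibility). What the paper's approach buys is brevity and coherence with its companion results on colored immaculate tableaux; what yours buys is independence from that machinery and the exact colored analogue of the classical facts $K_{\lambda\mu}\neq 0 \Rightarrow \mu \trianglelefteq \lambda$ and $K_{\lambda\lambda}=1$, which pins down the transition matrix as unitriangular over $\mathbb{Z}$, not merely invertible over $\mathbb{Q}$.
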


\begin{proof}
    Consider the transition matrix between the colored dual Schur basis and the colored monomial symmetric basis where the indices are ordered first by reverse lexicographic order on partitions applied to the $w\ell$ of the sentences and second by the lexicographic order on words. For example, the sentence $(ab,c)$ comes after $(abcb,c)$ but before $(cc,b)$. We show that this matrix is upper unitriangular. The entries on the diagonal, $\mathcal{K}_{P,P}$, are always greater than 0 because there always exists a CSSYT of shape $P$ and type $P$ given by filling each box in row $i$ with $i$'s. It remains to show that each entry below the diagonal is 0.  Observe that because every CSSYT is a colored immaculate tableau (one in which only the first column need strictly increase \cite{Dau23}), the number of CSSYT of a certain shape and type is always less than the number of colored immaculate tableau of that shape and type. We have chosen our order so that any entry below the diagonal in this matrix corresponds to an entry below the diagonal in the colored immaculate Kostka matrix used in \cite[Theorem 4.21]{Dau23}. We showed in the proof of \cite[Theorem 4.21]{Dau23} that the entries below the diagonal in the colored immaculate Kostka matrix are zero, and so the entries below the diagonal in the colored Schur Kostka matrix are zero as well. Therefore, the transition matrix is upper unitriangular and the colored dual Schur functions form a basis.
\end{proof}

\begin{defn}
    For a \psent $P$, the \emph{colored Schur function} $s_P$ is defined by $$\langle s_P, s^*_{Q} \rangle = \delta_{P,Q},$$ for all \psents  $Q$. In other words, $\{s_P\}_P$ is defined as the basis of $PSym_A$ that is dual to the colored dual Schur basis of $Sym_A$.  
\end{defn}

\begin{prop} Let $A$ be a unary alphabet. Then, for \psents $P$ and $Q$, 
$$\upsilon(s^*_P) = s_{w\ell(P)} \text{\qquad and \qquad} \upsilon(s_Q) = s_{w\ell({Q})}.$$
\end{prop}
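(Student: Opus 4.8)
The plan is to prove the two identities in sequence, deriving the $PSym_A$ statement from the $Sym_A$ statement by a duality argument. For the first identity $\upsilon(s^*_P) = s_{w\ell(P)}$, I would begin by applying $\upsilon$ term by term to the defining expansion $s^*_P = \sum_{Q} \mathcal{K}_{P,Q} m_Q$, using $\upsilon(m_Q) = m_{w\ell(Q)}$ together with the fact that $w\ell$ is a bijection between $PSent_A$ and partitions when $|A| = 1$. This reduces the whole identity to matching the colored Kostka number $\mathcal{K}_{P,Q}$ with the classical Kostka number $K_{w\ell(P), w\ell(Q)}$ in the unary case.

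To establish that Kostka identification, I would observe that over a one-letter alphabet the colored diagram of $P$ is simply the ordinary Young diagram of $\lambda = w\ell(P)$ with every box carrying the single color, so a CSSYT of shape $P$ is exactly an SSYT of shape $\lambda$. The type of such a tableau is the sentence $(a^{\beta_1}, \ldots, a^{\beta_j})$, where $\beta_i$ records the number of $i$'s; this sentence equals the p-sentence $Q = \mu_a$ precisely when $\beta_i = \mu_i$ for all $i$, i.e. when the content of the tableau is the partition $\mu = w\ell(Q)$. Hence $\mathcal{K}_{\lambda_a, \mu_a}$ counts SSYT of shape $\lambda$ with content $\mu$, which is by definition $K_{\lambda, \mu}$. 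Substituting back yields $\upsilon(s^*_P) = \sum_{\mu} K_{\lambda,\mu}\, m_\mu = s_\lambda$, the classical monomial expansion of the Schur function recalled in Section~\ref{sec:background}.

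For the second identity $\upsilon(s_Q) = s_{w\ell(Q)}$, the key preliminary is that the uncoloring maps intertwine the duality pairings. Since $\langle h_P, m_S\rangle = \delta_{P,S} = \delta_{w\ell(P), w\ell(S)} = \langle h_{w\ell(P)}, m_{w\ell(S)}\rangle = \langle \upsilon(h_P), \upsilon(m_S)\rangle$ when $|A| = 1$, bilinearity gives $\langle x, y\rangle = \langle \upsilon(x), \upsilon(y)\rangle$ for all $x \in PSym_A$ and $y \in Sym_A$. Combining this with the defining relation $\langle s_Q, s^*_P\rangle = \delta_{Q,P}$ and the first identity, I would compute $\langle \upsilon(s_Q), s_{w\ell(P)}\rangle = \langle \upsilon(s_Q), \upsilon(s^*_P)\rangle = \delta_{Q,P} = \delta_{w\ell(Q), w\ell(P)}$. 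As $P$ ranges over $PSent_A$ the functions $s_{w\ell(P)}$ range over the entire (orthonormal) classical Schur basis, so these equations say that $\upsilon(s_Q)$ pairs with every $s_\mu$ exactly as $s_{w\ell(Q)}$ does; orthonormality of $\{s_\mu\}$ (equivalently nondegeneracy of the Hall inner product) then forces $\upsilon(s_Q) = s_{w\ell(Q)}$.

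The main obstacle I anticipate is the bookkeeping in the first identity, specifically making the Kostka identification airtight. One must be careful that $s^*_P$ sums only over p-sentence types $Q$, so that in the unary case only tableaux whose content is weakly decreasing (a genuine partition) are counted, and then check that this restriction coincides exactly with the one implicit in the classical expansion $s_\lambda = \sum_\mu K_{\lambda,\mu}\, m_\mu$ indexed by partitions $\mu$. Once the unary colored Kostka numbers are matched with the classical ones, both identities follow from the linear-algebra arguments above.
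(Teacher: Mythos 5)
Your proof is correct. For the first identity you follow the paper's own argument: apply $\upsilon$ termwise to $s^*_P = \sum_Q \mathcal{K}_{P,Q}\, m_Q$ and identify the unary colored Kostka numbers with the classical ones via the bijection between colored and ordinary semistandard Young tableaux; your extra bookkeeping about types being p-sentences (hence partition contents) is exactly the point the paper leaves implicit. For the second identity you take a genuinely different route. The paper inverts the Kostka matrix: since $\mathcal{K}_{P,Q} = K_{w\ell(P),w\ell(Q)}$ as matrices, their inverses agree, so $\upsilon(s_Q) = \sum_{P} K^{-1}_{w\ell(P),w\ell(Q)}\, h_{w\ell(P)} = s_{w\ell(Q)}$ by the classical inverse-Kostka expansion of the Schur functions in the complete homogeneous basis. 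You instead show that $\upsilon$ intertwines the duality pairings (checked on the dual bases $\{h_P\}$, $\{m_S\}$ and extended bilinearly), deduce $\langle \upsilon(s_Q), s_{\mu}\rangle = \delta_{w\ell(Q),\mu}$ for every partition $\mu$, and conclude by nondegeneracy of the Hall pairing and orthonormality of the Schur basis. Each approach has a payoff: the paper's version tacitly relies on the change-of-basis duality statement (Proposition \ref{hopf_dual}) to know that $s_Q$ expands in $\{h_P\}$ with the inverse colored Kostka coefficients --- a step it does not spell out --- and in return produces that explicit $h$-expansion as a byproduct; your isometry argument makes the duality usage fully explicit and never requires inverting a matrix, at the cost of yielding only the identity itself rather than an expansion formula.
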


\begin{proof}
First, we show that $\upsilon(s^*_{P})= s_{w\ell(P)}$. Observe that when $|A|=1$, colored semistandard Young tableaux are in clear bijection with semistandard Young tableaux and so $\mathcal{K}_{P,Q} = K_{w\ell(P), w\ell(Q)}$. Additionally, there is only one sentence $P$ such that $w\ell(P)= \lambda$ for any partition $\lambda$.  Then, if $w\ell(P) = \lambda$, we have $$\upsilon(s^*_{P}) = \sum_Q \mathcal{K}_{P,Q} \upsilon(m_Q) = \sum_{Q} K_{w\ell(P), w\ell(Q)} m_{w\ell(Q)} = \sum_{\mu} K_{\lambda, \mu} m_{\mu} = s_{\lambda}.$$

Since the matrices for $\mathcal{K}_{P,Q}$ and $K_{\lambda, \mu}$ are equal, so are their inverse matrices meaning $\mathcal{K}^{-1}_{P,Q} = K^{-1}_{w\ell(P), w\ell(Q)}.$ Therefore, if $w\ell(Q) = \mu$, $$\upsilon(s_Q) = \sum_{P} K^{-1}_{w\ell(P), w\ell(Q)} \upsilon(h_{P}) = \sum_{P} K^{-1}_{w\ell(P), w\ell(Q)} h_{w\ell(P)} = \sum_{\lambda} K^{-1}_{\lambda, \mu}h_{\lambda} = s_{\mu}. \qedhere$$
\end{proof}

\begin{ex}
    The colored Schur function for $(aaa,aa)$ is given by $$s^*_{(aaa,aa)} = m_{(aaa,aa)} + 2 m_{(aa,aa,a)} + 3m_{(aa,a,a,a)} + 5m_{(a,a,a,a,a)}.$$ Applying the uncoloring map, we get $$\upsilon(s^*_{(aaa,aa)}) = m_{(3,2)} + 2 m_{(2,2,1)} + 3m_{(2,1,1,1)} + 5m_{(1,1,1,1,1)} = s_{(3,2)}.$$
\end{ex}

\section{Future work}

It remains open to study further properties of these generalizations of the Schur functions such as Pieri rules, creation operator expressions, and Jacobi-Trudi rules. Additionally, in \cite{Dau23, Dau_thesis}, the author introduced the colored immaculate, colored young noncommutative Schur, and colored shin bases of $NSym_A$, as well as their duals: the colored dual immaculate, the colored young quasisymmetric Schur, and the colored extended Schur bases of $QSym_A$.  Each of these specializes to a Schur-like basis of $NSym$ or $QSym$, which generalizes the Schur bases of $Sym$.  

A variety of interesting questions remain about $Sym_A$ and $PSym_A$ in relation to these colored Schur-like bases, including the following: How do the colored Schur and colored dual Schur bases relate to the various colored Schur-like bases of $NSym_A$ and $QSym_A$? What are the commutative images of the colored immaculate, colored Young noncommutative Schur, and colored shin bases? Do any subsets of these images form bases of $PSym_A$ that generalize the Schur functions? Where is there overlap between these images, if any? Are there generalizations of the Schur functions in $Sym_A$ that we can express as sums of the colored dual immaculate, colored Young quasisymmetric Schur, and colored extended Schur functions in meaningful combinatorial ways?

\printbibliography

\end{document}